\def\br{\begin{remark}\rm\small}
\def\er{\end{remark}}
\def\bt{\begin{theorem}}
\def\et{\end{theorem}}
\def\bd{\begin{definition}}
\def\ed{\end{definition}}
\def\bp{\begin{proposition}}
\def\ep{\end{proposition}}
\def\bl{\begin{lemma}}
\def\el{\end{lemma}}
\def\bc{\begin{corollary}}
\def\ec{\end{corollary}}
\def\beaq{\begin{align}}
\def\eeaq{\end{align}}
\newcommand{\be}{\begin{equation}}
\newcommand{\ee}{\end{equation}}
\newcommand{\beq}{\begin{equation}}
\newcommand{\eeq}{\end{equation}}
\newcommand{\bea}{\begin{align}}
\newcommand{\eea}{\end{align}}
\newcommand{\beqq}{\begin{equation*}}
\newcommand{\eeqq}{\end{equation*}}
\newcommand{\beaa}{\begin{align*}}
\newcommand{\eeaa}{\end{align*}}
\newcommand{\trunc}{\mathrm{T}}
\newcommand{\Var}{{\operatorname{Var}}}
\newcommand{\vp}{{\mathrm{vp}}}
\newcommand{\td}{\tilde}
\theoremstyle{plain}
\newtheorem{theorem}{Theorem}[section]
\newtheorem{proposition}{Proposition}[section]
\newtheorem{lemma}[theorem]{Lemma}
\theoremstyle{remark}
\newtheorem{definition}[theorem]{Definition}
\def\abstractenglish{
This paper establishes the optimal sub-Gaussian variance proxy for truncated Gaussian and truncated exponential random variables. 
The proofs rely on first characterizing the optimal variance proxy as the unique solution to a set of two equations and then observing that for these two truncated distributions, one may find explicit solutions to this set of equations. Moreover, we establish the conditions under which the optimal variance proxy coincides with the variance, thereby characterizing the strict sub-Gaussianity of the truncated random variables. 
Specifically, we demonstrate that truncated Gaussian variables exhibit strict sub-Gaussian behavior if and only if they are symmetric, meaning their truncation is symmetric with respect to the mean. Conversely, truncated exponential variables are shown to never exhibit strict sub-Gaussian properties. These findings contribute to the understanding of these prevalent probability distributions in statistics and machine learning, providing a valuable foundation for improved and optimal modeling and decision-making processes. 
}
\def\titre{
Optimal sub-Gaussian variance proxy\\ for truncated Gaussian and exponential random variables
}
\def\support{The second author used part of his IUF junior grant G752IUFMAR for this research. The third author was partially supported by ANR-21-JSTM-0001 grant.}
\title{\titre}
\author{
Mathias Barreto\footnote{Higher School of Economics University, Moscow, Russia, \textsf{mbarretokonigliaro@edu.hse.ru}},
Olivier Marchal\footnote{Universit\'{e} Jean Monnet Saint-\'{E}tienne, CNRS, Institut Camille Jordan UMR 5208, Institut Universitaire de France, F-42023, Saint-\'{E}tienne, France, \textsf{olivier.marchal@univ-st-etienne.fr}},
and 
Julyan Arbel\footnote{Univ. Grenoble Alpes, Inria, CNRS, Grenoble INP, LJK, 38000 Grenoble, France, \textsf{julyan.arbel@inria.fr}}
}
\begin{document}

\maketitle

\begin{center}
    \textbf{Abstract}
\end{center}
\abstractenglish


\vspace{1.0cm}

\section{Introduction}

The sub-Gaussian property, a fundamental characteristic extensively explored in seminal works such as \cite{buldygin1980sub,boucheron2013concentration,pisier2016subgaussian}, has played a pivotal role in shaping the landscape of probability distributions. This property, defined by the tail behavior of random variables, has garnered significant attention for its implications in various mathematical disciplines such as 
concentration inequalities and large deviation estimates \citep{hoeffding1963probability,kearns1998large,ledoux1999concentration,bobkov1999exponential,raginsky2013concentration,boucheron2013concentration,berend2013concentration,perry2016statistical,ben2017concentration}, 
random series in relation to the geometry of Banach spaces \citep{Pisier1986ProbabilisticMI,Chow1966}, 
spectral properties of random matrices \citep{LITVAK2005491,RudelsonVershynin2009}, 
or Bayesian statistics \citep{elder2016bayesian,Catoni2018DimensionfreePB,vladimirova2019bayesianICML,Lee2020,vladimirova2021accurate}. More broadly, the sub-Gaussian property holds paramount importance in machine learning and artificial intelligence applications \citep{Devroye2016,ML4,AI2,AI1,AI3,ML5,ML1,ML2,ML3}.

The sub-Gaussianity of a random variable is a key determinant of its concentration properties which can be defined as follows. 

\begin{definition}\label{def:sub-Gaussian}
    A scalar random variable $X$ is called sub-Gaussian if there exists some $s\geq 0$ such that for all $\theta \in \mathbb{R}$:
    \begin{align}\label{eq:subGaussian-def}
       \mathbb{E}[\exp(\theta(X - \mathbb{E}[X]))] \leq \exp\left( \frac{s^2\theta^2}{2} \right).
    \end{align}
Any $s^2$ satisfying Equation~\eqref{eq:subGaussian-def} is called a variance proxy, and the smallest such $s^2$ is called the \textbf{optimal variance proxy}, which shall be denoted as $\|X\|_{\vp}^2$. It is well known that 
$$\Var[X]\leq\|X\|_{\vp}^2,$$ 
and a random variable satisfying $\Var[X]=\|X\|_{\vp}^2$ is called  \emph{strictly} sub-Gaussian.
\end{definition}
%
Note that there exist many equivalent ways for defining this sub-Gaussian property, each can be more useful depending on the sought application \citep[see Proposition 2.5.2 in][]{vershynin2018high}. The one recalled in Equation~\eqref{eq:subGaussian-def} is often referred to as the Laplace condition. It is equivalent to the following condition on the tails of $X$:
\begin{equation*}
    \exists\, C>0\,\, \text{such that}\,\,\forall\, t\geq0,\,\, \mathrm{P}(\vert X\vert \geq t)\leq 2e^{-\frac{t^2}{C^2}},
\end{equation*}
as well as the following condition on the moments of $X$:
\begin{equation*}
    \exists\, C>0\,\, \text{such that}\,\,\forall\, p\geq1,\,\, \mathbb{E}[\vert X\vert^p]\leq C^p p^{\frac{p}{2}}.
\end{equation*}
This shows that a sub-Gaussian random variable has finite moments of any order larger than one. 
\medskip

\noindent\textbf{Truncated random variables.}  
The focus of this paper lies on the relevance of the sub-Gaussian property to truncated Gaussian and truncated exponential distributions.  
Truncated distributions emerge as natural models when data exhibits inherent constraints or boundaries. 
These distributions find pervasive use in statistics and machine learning, where understanding and modeling the statistical properties of real-world phenomena are paramount. 

Notable fields include  
survival analysis, for handling censoring \citep{balakrishnan2000progressive}, 
reinforcement learning and bandit problems, for representing action probabilities or rewards that are bounded within certain limits \citep{bubeck2012regret,lattimore2020bandit,szepesvari2022algorithms},
Bayesian statistics, to represent prior knowledge or beliefs about parameters that are restricted to certain intervals \citep{gelman2014bayesian},
or sampling procedures, to reduce Monte Carlo error \citep{ionides2008truncatedIS,wawrzynski2007truncated}.
More broadly, handling truncation in data is useful in countless applications, ranging from clinical trials, financial modeling, to environmental modeling, underscoring the ubiquitous nature of truncated distributions in contemporary research and applications.\medskip

\noindent\textbf{Contributions.} In general, establishing that a random variable is sub-Gaussian might be easy, but complication stems from obtaining the optimal variance proxy. This has been achieved for the most commonly used distributions, including the beta and multinomial distributions \citep{marchal2017sub}, Bernoulli, binomial, Kumaraswamy, and triangular distributions \citep{arbel2020strict}. 
In the present work, we focus on establishing the optimal variance proxy for truncated Gaussian and truncated exponential distributions.
The proofs are based initially on defining the optimal variance proxy as the unique solutions to a pair of equations. Subsequently, it is observed that explicit solutions to this set of equations can be found for the two truncated distributions in question. 

Furthermore, we establish the conditions under which the optimal variance proxy matches the variance, thereby identifying the strict sub-Gaussianity of the truncated random variables.
In detail, we illustrate that truncated Gaussian variables display strict sub-Gaussian behavior if and only if they exhibit symmetry, meaning their truncation is symmetric relative to the mean. Conversely, truncated exponential variables are demonstrated to lack strict sub-Gaussian properties consistently. 
These results enhance our comprehension of these common probability distributions in statistics and machine learning, laying a solid groundwork for enhanced and optimal modeling and decision-making processes.
\medskip

\noindent\textbf{Outline.} 
We present the optimal sub-Gaussian variance proxy and the strict sub-Gaussianity results for truncated Gaussian and truncated exponential random variables respectively in Section~\ref{sec:gauss} and in Section~\ref{sec:expo}, along with the main proofs. 
Future research directions are proposed in Section~\ref{sec:directions}. Technical details on the proofs are deferred to Appendix~\ref{sec:app:gauss} and Appendix~\ref{sec:app:expo}, respectively for truncated Gaussian and truncated exponential random variables.


\section{Truncated normal random variables}\label{sec:gauss}

This section establishes the optimal variance proxy for a truncated normal variable. Observe that we \emph{a priori} know already that any truncated random variable is sub-Gaussian by Hoeffding's Lemma (when it is truncated along a finite interval). 

In general, if $X$ is a random variable with density $f_X$ and cumulative distribution function $F_X$ then its truncated version (which we shall denote by $X_\trunc$) inside the interval $(a,b)$ (for $a<b$) has the form:

\begin{align*}
    f_{X_\trunc}(x) = \begin{cases} \frac{f_X(x)}{F_X(b) - F_X(a)} \quad & \text{for $x \in (a,b)$}, \\
    0 \quad &\text{otherwise.}\end{cases}
\end{align*}

Let $X \sim \mathcal{N}(\mu,\sigma^2)$ and $\phi(\cdot), \Phi(\cdot)$ be the density and cumulative density of a standard normal variable. If we truncate $X$ on $(a,b)$ then it is known that its moment generating function, mean and variance are given by:

\begin{equation}
     \label{eq3.1}
\begin{split}
        \mathbb{E}\left[e^{\theta X_\trunc}\right] &= \exp\left\{ \theta \mu  + \theta^2 \frac{\sigma^2}{2}\right\}\left[\frac{\Phi(\beta - \sigma\theta)-\Phi(\alpha - \sigma\theta)}{\Phi(\beta)-\Phi(\alpha)}\right],\\
    \mathbb{E}[X_\trunc] &= \mu + \frac{\phi(\alpha)-\phi(\beta)}{\Phi(\beta)-\Phi(\alpha)}\sigma,  \\
    \Var[X_\trunc] &= \sigma^2\left[1-\frac{\beta\phi(\beta)-\alpha\phi(\alpha)}{\Phi(\beta)-\Phi(\alpha)}
-\left(\frac{\phi(\alpha)-\phi(\beta)}{\Phi(\beta)-\Phi(\alpha)}\right)^2\right] ,
\end{split}
\end{equation}
where 
\begin{equation*}
    \alpha\coloneqq \frac{a-\mu}{\sigma}\,\,,\,\, \beta\coloneqq \frac{b-\mu}{\sigma}.
\end{equation*}

Our main result establishes the optimal variance proxy for this density, which turns out to have a closed-form expression:

\begin{theorem}\label{thm:main_trunc_normal_non_standard}
   Let $X_\trunc$ be a normal variable with mean $\mu \in \mathbb{R}$ and variance $\sigma^2 \in \mathbb{R}_{>0}$ truncated along the interval $(a,b)$ with $a<b$. Then its optimal variance proxy is given by:
    \begin{align*}
        \|X_\trunc\|_{\vp}^2 = \sigma^2\times
        \begin{cases}
            1-\frac{2 \sigma}{b+a-2\mu}
            \frac{\phi(\frac{a-\mu}{\sigma})-\phi(\frac{b-\mu}{\sigma})}
            {\Phi(\frac{b-\mu}{\sigma})-\Phi(\frac{a-\mu}{\sigma})}
            \quad & \text{if}\,\, -\infty<a< b<+\infty \,\,\text{and}\,\,  a+b \neq 2\mu, \\
            1-
            \frac{2(b-\mu)}{\sigma}
            \frac{\phi(\frac{b-\mu}{\sigma})}{2\Phi(\frac{b-\mu}{\sigma})-1} \quad & \text{if}\,\, -\infty<a< b<+\infty \,\,\text{and}\,\, a+b = 2\mu,\\
            1 & \text{if}\,\, a=-\infty \,\,\text{and/or} \,\,b=+\infty.
        \end{cases} 
    \end{align*}
    In particular, $X_\trunc$ is \emph{strictly} sub-Gaussian if and only if $a+b=2\mu$, i.e. if and only if the truncation is symmetric relative to the mean $\mu$ of the Gaussian variable. 
\end{theorem}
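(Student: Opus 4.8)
The plan is to work with the centered log-moment-generating function $u(\theta)\coloneqq\log\mathbb{E}[e^{\theta(X_\trunc-m)}]$, where $m\coloneqq\mathbb{E}[X_\trunc]$, and to use the standard reformulation of Definition~\ref{def:sub-Gaussian}: $s^2$ is a variance proxy iff $u(\theta)\le s^2\theta^2/2$ for all $\theta$, so that $\|X_\trunc\|_{\vp}^2=\sup_{\theta\neq0}2u(\theta)/\theta^2$. Since $u(\theta)=\tfrac12\Var[X_\trunc]\,\theta^2+o(\theta^2)$ near the origin, the ratio $2u(\theta)/\theta^2$ extends continuously by $\Var[X_\trunc]$ at $\theta=0$. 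Invoking the general two-equation characterization that underlies the paper's method, the supremum is attained at a point $\theta_0$ solving the tangency system $u'(\theta_0)=s^2\theta_0$ and $2u(\theta_0)=s^2\theta_0^2$; eliminating $s^2$ gives $\theta_0\,u'(\theta_0)=2u(\theta_0)$ together with $s^2=u'(\theta_0)/\theta_0$. First I would therefore reduce the problem to locating the nonzero roots of $\psi(\theta)\coloneqq\theta u'(\theta)-2u(\theta)$.

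The heart of the argument is an explicit, symmetric choice of $\theta_0$. From \eqref{eq3.1} one writes $u(\theta)=\tfrac{\sigma^2\theta^2}{2}-c\theta+g(\theta)$ with $c\coloneqq m-\mu=\sigma\frac{\phi(\alpha)-\phi(\beta)}{\Phi(\beta)-\Phi(\alpha)}$ and $g(\theta)\coloneqq\log\frac{\Phi(\beta-\sigma\theta)-\Phi(\alpha-\sigma\theta)}{\Phi(\beta)-\Phi(\alpha)}$, whose derivative is $g'(\theta)=\sigma\frac{\phi(\alpha-\sigma\theta)-\phi(\beta-\sigma\theta)}{\Phi(\beta-\sigma\theta)-\Phi(\alpha-\sigma\theta)}$. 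The key observation is that exponential tilting by $\theta_0\coloneqq\frac{a+b-2\mu}{\sigma^2}=\frac{\alpha+\beta}{\sigma}$ shifts the standardized truncation interval $(\alpha,\beta)$ to its own reflection $(-\beta,-\alpha)$; using $\phi(-x)=\phi(x)$ and $\Phi(-x)=1-\Phi(x)$, this forces $g(\theta_0)=0$ and $g'(\theta_0)=-c$. Substituting yields $u'(\theta_0)=\sigma^2\theta_0-2c$ and $2u(\theta_0)=\theta_0(\sigma^2\theta_0-2c)=\theta_0 u'(\theta_0)$, so both tangency equations hold simultaneously, and $s^2=u'(\theta_0)/\theta_0=\sigma^2-\tfrac{2c}{\theta_0}$ collapses exactly to the claimed closed form $\sigma^2\bigl[1-\tfrac{2\sigma}{a+b-2\mu}\tfrac{\phi(\alpha)-\phi(\beta)}{\Phi(\beta)-\Phi(\alpha)}\bigr]$ when $a+b\neq2\mu$.

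The hard part will be upgrading this tangency point to a genuine global maximizer, i.e. proving that $\theta_0$ is the only nonzero stationary point of $\theta\mapsto2u(\theta)/\theta^2$, equivalently the only nonzero root of $\psi$ (note $\psi(0)=\psi'(0)=0$ and $\psi''(\theta)=\theta u'''(\theta)$). Since $u'''(\theta)$ is the third cumulant of the exponentially tilted law — itself a truncated Gaussian, now on $(\alpha-\sigma\theta,\beta-\sigma\theta)$ — I would control the sign of $u'''$ through the skewness of tilted truncated Gaussians and use it to pin down the variations of $\psi$, together with the decay $2u(\theta)/\theta^2\to0$ as $\theta\to\pm\infty$ (valid when both endpoints are finite). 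Establishing this shape and uniqueness is the main technical obstacle, exactly the monotonicity analysis I expect to defer to the appendix; granting it, the characterization identifies $\|X_\trunc\|_{\vp}^2$ with $2u(\theta_0)/\theta_0^2=s^2$ above.

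Finally I would treat the remaining cases and read off the strict sub-Gaussianity dichotomy. When $a+b=2\mu$ we have $\theta_0=0$: the truncation is symmetric about $\mu=m$, so $u$ is even, $\psi$ has no nonzero root, and the supremum of $2u(\theta)/\theta^2$ is attained at $\theta=0$, giving $\|X_\trunc\|_{\vp}^2=\Var[X_\trunc]$; evaluating \eqref{eq3.1} at $\alpha=-\beta$ reproduces the second formula and exhibits strict sub-Gaussianity. When an endpoint is infinite, $X_\trunc$ is unbounded on that side, $K(\theta)$ behaves like the untruncated Gaussian as $\theta$ runs to the corresponding infinity, and $2u(\theta)/\theta^2\to\sigma^2$ without being attained, whence $\|X_\trunc\|_{\vp}^2=\sigma^2$. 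For the converse direction of the dichotomy I would use that strict sub-Gaussianity forces $\theta=0$ to maximize $2u(\theta)/\theta^2$, hence the vanishing of the linear Taylor coefficient $\kappa_3/3$ of this smooth ratio, i.e. a zero third central moment $\kappa_3$; since a truncated Gaussian has zero third central moment precisely for symmetric truncation, this pins the equivalence to $a+b=2\mu$ and closes the proof.
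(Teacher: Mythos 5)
Your tangency computation is correct, and after the rescaling $\theta\mapsto\sigma\theta$ it coincides exactly with the paper's: your point $\theta_0=(a+b-2\mu)/\sigma^2$ is the paper's tangency point $\alpha+\beta=2\theta_0$ in the standardized variable, and your value $s^2=\sigma^2-2c/\theta_0$ is precisely the paper's $w_c=-c_{\alpha,\beta}/(2\theta_0)$ written as $s^2=2w_c+1$. The overall architecture is also the paper's in an equivalent parametrization: where you study $r(\theta)=2u(\theta)/\theta^2$ and the roots of $\psi(\theta)=\theta u'(\theta)-2u(\theta)$, the paper fits the parabola $p_{\alpha,\beta;w}(\theta)=w\theta^2+c_{\alpha,\beta}\theta$ above $f_{\alpha,\beta}$ with tangency at $\theta=0$ and $\theta=\alpha+\beta$; and your proposed sign analysis of $u'''$ is literally its Lemma~\ref{lem:h_strict_concave}, since $u'''(\theta)=\sigma^3 h''_{\alpha,\beta}(\sigma\theta)$.

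The genuine gap is that this last ingredient is asserted, not proved. Everything in your argument — uniqueness of the nonzero root of $\psi$, the conclusion that $r$ attains its maximum at $\theta_0$ rather than at $0$, and even your converse step "zero third central moment implies symmetric truncation" — rests on the claim that $u'''$ changes sign exactly once, at $(\alpha+\beta)/(2\sigma)$, being positive before and negative after (equivalently, that a tilted truncated Gaussian has positive skewness if and only if its standardized truncation interval has positive midpoint). You describe this as a monotonicity analysis to be "deferred to the appendix," but it is not a routine verification: it is the entire technical content of the paper's Appendix~\ref{sec:app:gauss}, which needs the reduction to the symmetric case, the zero-counting study of $F'''_{-\beta,\beta}$ (Lemma~\ref{LemmaFthird}), the sufficient condition via the quintic expression $S_\beta$ (Proposition~\ref{PropSufficientCondition}), and the chain of hyperbolic-polynomial positivity results (Propositions~\ref{SSS} and~\ref{PropRPos}). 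No soft argument (e.g.\ a general skewness-under-tilting principle) is available to replace it. So as it stands your proposal establishes that the claimed value solves the tangency system — i.e.\ it is a critical value of $r$ — but not that it is the optimal variance proxy; closing that gap requires proving the sign property of $u'''$, which is where essentially all of the work in the paper's proof lies.
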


In order to prove Theorem~\ref{thm:main_trunc_normal_non_standard}, let us first note that we can reduce the problem to the one of truncating a standard Gaussian random variable. 
Consider the transformation $Y\coloneqq \frac{X-\mu}{\sigma}$ and let $Y_\trunc$ be the truncated standard normal along the interval $(\alpha,\beta)$. Then, we have the relation:

\begin{align}
    \mathbb{E}\big[e^{\theta\left(X_\trunc-\mathbb{E}[X_\trunc]\right)}\big] &= \int_a^b e^{\theta(x-\mathbb{E}[X_\trunc])}\frac{\phi(\frac{x-\mu}{\sigma})}{\Phi(\frac{b-\mu}{\sigma})-\Phi(\frac{a-\mu}{\sigma})} \frac{dx}{\sigma} \notag\\
    &\overset{y=\frac{x-\mu}{\sigma}}{=} \int_{\alpha}^{\beta} \exp\left[\theta(y\sigma + \mu - \mathbb{E}[X_\trunc]) \right]\frac{\phi(y)}{\Phi(\beta)-\Phi(\alpha)}dy \notag\\
    &= \mathbb{E}\big[e^{\sigma\theta\left(Y_\trunc-\mathbb{E}[Y_\trunc]\right)}\big]. \label{eq3.2}
\end{align}

That is, to optimally bound the centered moment generating function of $X_\trunc$, we can restrict to optimally bounding that of $Y_\trunc$, as (\ref{eq3.2}) implies $\|X_\trunc\|_{\vp}^2 = \sigma^2 \|Y_\trunc\|_{\vp}^2$. Hence, an equivalent reformulation of Theorem~\ref{thm:main_trunc_normal_non_standard} is the following.

\begin{theorem}\label{thm:main_trunc_normal_standard}
   Let $Y_\trunc$ be a standard normal variable truncated in the interval $(\alpha,\beta)$ with $\alpha<\beta$. Then its optimal variance proxy is given by:
    \begin{align*}
        \|Y_\trunc\|_{\vp}^2 = 
        \begin{cases}
            1-\frac{2\left(\phi(\alpha)-\phi(\beta)\right)}{(\alpha+\beta)\left(\Phi(\beta)-\Phi(\alpha)\right)} \quad & \text{if}\,\, -\infty<\alpha< \beta<+\infty \,\,\text{and}\,\, \alpha \neq - \beta ,\\
            1-\frac{2\beta\phi(\beta)}{2\Phi(\beta)-1} \quad &
            \text{if}\,\, -\infty<\alpha< \beta<+\infty \,\,\text{and}\,\, \alpha = - \beta,\\
            1 & \text{if}\,\, \alpha=-\infty \,\,\text{and/or} \,\,\beta=+\infty.
        \end{cases} 
    \end{align*}
\end{theorem}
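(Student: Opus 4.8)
My plan is to prove the standard‑normal statement (Theorem~\ref{thm:main_trunc_normal_standard}), since the reduction $\|X_\trunc\|_{\vp}^2=\sigma^2\|Y_\trunc\|_{\vp}^2$ is already carried out in \eqref{eq3.2}. I would work with the centered cumulant generating function $K(\theta)=\log\mathbb{E}\!\left[e^{\theta(Y_\trunc-m)}\right]$, where $m:=\mathbb{E}[Y_\trunc]$, and use that the sub‑Gaussian condition \eqref{eq:subGaussian-def} is exactly $K(\theta)\le \tfrac{s^2\theta^2}{2}$ for all $\theta$, so that $\|Y_\trunc\|_{\vp}^2=\sup_{\theta\ne0} u(\theta)$ with $u(\theta):=2K(\theta)/\theta^2$ (and $u(0):=K''(0)=\Var[Y_\trunc]$ by continuity). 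The optimum is attained either at an interior critical point $\theta_0\ne0$ or in the limits $\theta\to0,\pm\infty$. At an interior optimum the parabola $\theta\mapsto\tfrac{s^2\theta^2}{2}$ is tangent to $K$, yielding the pair of equations $\theta_0K'(\theta_0)=2K(\theta_0)$ and $s^2=K'(\theta_0)/\theta_0$; establishing that this system pins down the global maximum is the general characterization I would rely on (and justify, see below).

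From \eqref{eq3.1} with $\mu=0,\sigma=1$ I have $K(\theta)=-m\theta+\tfrac{\theta^2}{2}+\log\frac{\Phi(\beta-\theta)-\Phi(\alpha-\theta)}{\Phi(\beta)-\Phi(\alpha)}$ with $m=\frac{\phi(\alpha)-\phi(\beta)}{\Phi(\beta)-\Phi(\alpha)}$, and, using $\phi'(x)=-x\phi(x)$, $K'(\theta)=\theta-m+R(\theta)$ where $R(\theta)=\frac{\phi(\alpha-\theta)-\phi(\beta-\theta)}{\Phi(\beta-\theta)-\Phi(\alpha-\theta)}$. The decisive observation is to try $\theta_0=\alpha+\beta$: then $\alpha-\theta_0=-\beta$ and $\beta-\theta_0=-\alpha$, so evenness of $\phi$ together with $\Phi(-x)=1-\Phi(x)$ gives $R(\alpha+\beta)=-m$ and makes the logarithmic term in $K$ equal to $\log1=0$. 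Substituting, both tangency equations collapse simultaneously: $\theta_0K'(\theta_0)=(\alpha+\beta)(\alpha+\beta-2m)=2K(\theta_0)$ and $s^2=K'(\theta_0)/\theta_0=1-\frac{2m}{\alpha+\beta}$, which is exactly the claimed formula $1-\frac{2(\phi(\alpha)-\phi(\beta))}{(\alpha+\beta)(\Phi(\beta)-\Phi(\alpha))}$ in the generic case $\alpha\ne-\beta$.

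For the symmetric case $\alpha=-\beta$ the candidate $\theta_0=\alpha+\beta$ degenerates to $0$: the truncated variable is symmetric about $m=0$, $K$ is even, and the relevant tangency is second‑order at $\theta_0=0$, forcing $s^2=K''(0)=\Var[Y_\trunc]=1-\frac{2\beta\phi(\beta)}{2\Phi(\beta)-1}$ (specializing the variance in \eqref{eq3.1} via evenness of $\phi$); granting the global bound below, this is the optimal proxy and proves strict sub‑Gaussianity, and it is also recovered as the $\alpha\to-\beta$ limit of the generic formula by l'Hôpital. For the unbounded cases $\theta_0=\alpha+\beta=\pm\infty$ is not attained, and I would check that $u(\theta)\to1$ as $\theta\to+\infty$ (the log term tends to a constant while $K\sim\theta^2/2$) while $u$ stays below $1$ elsewhere, so $\sup_\theta u(\theta)=1$, which equals $\Var=1$ exactly in the fully untruncated case.

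The main obstacle is optimality, i.e.\ showing that the tangency point $\theta_0$ is the \emph{global} maximizer of $u$, equivalently the global inequality $K(\theta)\le\tfrac{s^2\theta^2}{2}$ for every $\theta$. Since equality already holds at $\theta_0\ne0$, no smaller proxy can be valid, so only validity of the bound remains. I would prove it by analyzing $v(\theta):=\theta K'(\theta)-2K(\theta)$, for which $v(0)=v'(0)=0$ and $v''(\theta)=\theta K'''(\theta)$; controlling the sign of the third cumulant $K'''$ of the exponentially tilted truncated Gaussian then shows that $u$ has a single interior critical point, which must be the maximum (and forces $u\le\Var$ globally in the symmetric case). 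Carrying out this sign and monotonicity analysis rigorously—rather than the algebraic verification of the two equations, which is immediate once the guess $\theta_0=\alpha+\beta$ is made—is where the real difficulty lies.
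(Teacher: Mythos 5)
Your overall strategy coincides with the paper's, just reparametrized: your $u(\theta)=2K(\theta)/\theta^2$ formulation, your tangency point $\alpha+\beta$ (the paper's $2\theta_0$ with $\theta_0=(\alpha+\beta)/2$), and your third cumulant are exactly the paper's objects --- indeed $K'''=f'''_{\alpha,\beta}=h''_{\alpha,\beta}$ in the paper's notation. Your algebraic verification of the two tangency equations at $\theta=\alpha+\beta$ is correct and reproduces \eqref{wcthreezeros}, and your observation that no smaller proxy can be valid because equality in the sub-Gaussian inequality is attained at a nonzero point is precisely the paper's optimality argument.

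The genuine gap is the step you yourself flag at the end. Everything hinges on the claim that $K'''$ is strictly positive on $(-\infty,(\alpha+\beta)/2)$ and strictly negative on $((\alpha+\beta)/2,+\infty)$, which is exactly Lemma~\ref{lem:h_strict_concave} (strict convexity/concavity of $h_{\alpha,\beta}$ on either side of $\theta_0$). You state that ``controlling the sign of the third cumulant'' yields a single interior critical point of $u$, but you give no proof of this sign property, and it is far from a routine computation: it is the entire content of Appendix~\ref{sec:app:gauss}, requiring the reduction to the symmetric case $\alpha=-\beta$, viewing the concavity equation as a quadratic in $F_{-\beta,\beta}$, and discriminant/zero-counting analyses of several auxiliary hyperbolic-polynomial functions ($S_\beta$, $B_s$, $R$, $R_\pm$). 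Without this ingredient, your argument establishes only that $1-\frac{2(\phi(\alpha)-\phi(\beta))}{(\alpha+\beta)(\Phi(\beta)-\Phi(\alpha))}$ is a \emph{lower} bound for $\|Y_\trunc\|_{\vp}^2$ (since the sub-Gaussian inequality is tight at $\theta=\alpha+\beta$), not that it is a variance proxy at all; the global inequality $K(\theta)\le\tfrac{s_c^2\theta^2}{2}$, i.e.\ \eqref{eq3.3}, and with it the theorem, remains unproved. I do note that, granting the sign property, your deduction closes correctly: $v''(\theta)=\theta K'''(\theta)$ gives $v<0$ on $(-\infty,0)$, $v>0$ on $(0,(\alpha+\beta)/2)$, and concavity of $v$ beyond $(\alpha+\beta)/2$ forces its unique positive zero at $\alpha+\beta$, so $u$ increases up to $\alpha+\beta$ and decreases afterwards; the plan is sound, only its key lemma is missing. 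A minor additional slip: in the case $\alpha=-\infty$ with $\beta$ finite, the logarithmic term tends to a constant as $\theta\to-\infty$ (not $\theta\to+\infty$, where it diverges to $-\infty$), so the limit $u\to1$ must be taken on the side away from the truncation.
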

Theorem \ref{thm:main_trunc_normal_non_standard} and Theorem \ref{thm:main_trunc_normal_standard} are illustrated in Figure~\ref{fig:illustration-gaussian}. Note that only the case $\beta=-\alpha$ (or, equivalently, $a+b=2\mu$), yields strictly sub-Gaussian random variables. In this case, strict sub-Gaussianity is equivalent to symmetry (with respect to the mode/mean of the original Gaussian distribution). The relationship between symmetry and strict sub-Gaussianity is studied in \cite{arbel2020strict}.

\begin{figure}
    \centering
    \includegraphics[width=.7\textwidth]{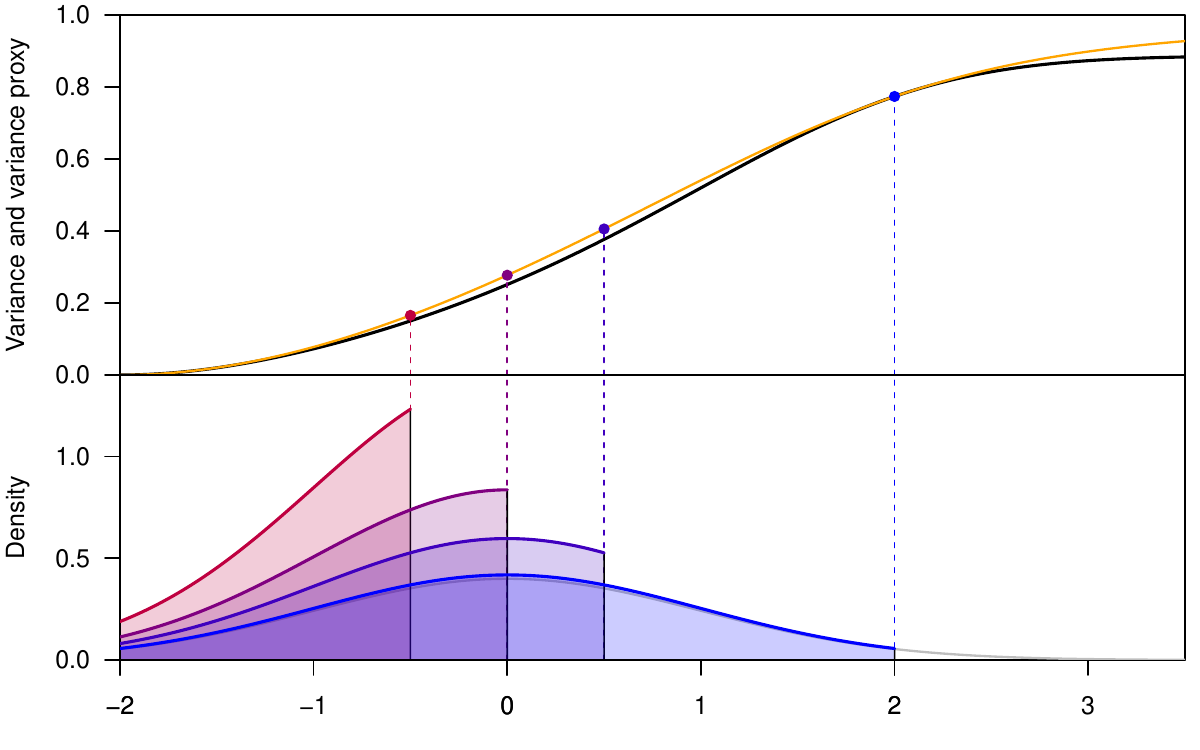}
    \caption{\textbf{Illustration of Theorem \ref{thm:main_trunc_normal_non_standard} and Theorem \ref{thm:main_trunc_normal_standard}.} The top panel represents the variance (black curve) and variance proxy (orange curve) of truncated standard Gaussian variables on intervals $(\alpha,\beta)$ with a fixed value of $\alpha=-2$ and varying values for $\beta\in\{-0.5, 0, 0.5, 2\}$, with colors from red to blue. The corresponding distributions are depicted on the bottom panel.  Note that only the case $\beta=2$ yields a symmetric distribution, which turns out to be strictly sub-Gaussian. See the blue dot on the right of the top panel where the variance and variance proxy are equal.}
    \label{fig:illustration-gaussian}
\end{figure}

\begin{proof}[Proof of Theorem \ref{thm:main_trunc_normal_standard}] 
Recall by Definition \ref{def:sub-Gaussian} that the optimal variance proxy of $Y_\trunc$ corresponds to the smallest possible $s^2 \geq 0$ such that 
\begin{equation*}
    \ln\left[ \exp\left(-\theta\left(\frac{\phi(\alpha)-\phi(\beta)}{\Phi(\beta)-\Phi(\alpha)}\right)+\frac{\theta^2}{2}\right)\left(\frac{\Phi(\beta-\theta)-\Phi(\alpha-\theta)}{\Phi(\beta)-\Phi(\alpha)} \right)  \right] \leq  \frac{s^2\theta^2}{2} \,\,,\,\, \forall \,\theta \in \mathbb{R}.
\end{equation*}
By defining:
\begin{gather*}
    c_{\alpha,\beta}\coloneqq \frac{\phi(\alpha)-\phi(\beta)}{\Phi(\beta)-\Phi(\alpha)}, \quad \theta_0 \coloneqq  \frac{\alpha+\beta}{2} \notag \\
    F_{\alpha,\beta}(\theta) \coloneqq  \Phi(\beta - \theta) - \Phi(\alpha - \theta), \quad f_{\alpha,\beta}(\theta) \coloneqq  \ln\left[ \frac{F_{\alpha,\beta}(\theta)}{\Phi(\beta)-\Phi(\alpha)} \right]\\
     h_{\alpha,\beta}(\theta)\coloneqq f_{\alpha,\beta}'(\theta) = \frac{\phi(\alpha-\theta)-\phi(\beta-\theta)}{\Phi(\beta-\theta)-\Phi(\alpha-\theta)}\\
    h_{\alpha,\beta}'(\theta) = \frac{\phi(\alpha-\theta)(\alpha-\theta)-\phi(\beta-\theta)(\beta-\theta)}{\Phi(\beta-\theta)-\Phi(\alpha-\theta)} - \left[\frac{\phi(\alpha-\theta)-\phi(\beta-\theta)}{\Phi(\beta-\theta)-\Phi(\alpha-\theta)}\right]^2, 
\end{gather*} 
this is equivalent to finding the smallest $w\coloneqq \frac{s^2-1}{2}\geq -\frac{1}{2}$ such that:
\beq  p_{\alpha,\beta;w}(\theta)\coloneqq w\theta^2 + c_{\alpha,\beta} \theta  \geq f_{\alpha,\beta}(\theta) \,\,,\,\, \forall\, \theta \in \mathbb{R}.
 \label{eq3.3}
\eeq

The delicate thing is understanding the right-hand side's function behavior. Note that this function is independent of the value of $w$. Let us first start with a lemma regarding the symmetry of the polynomial $p_{\alpha,\beta;w}$ and functions $f_{\alpha,\beta}$ and $h_{\alpha,\beta}$.

\begin{lemma}\label{Lemma3.2.1}
    For all $\theta \in \mathbb{R}$ it holds that:
    \begin{gather}
        p_{\alpha,\beta;w}\left(-\frac{c_{\alpha,\beta}}{2w}+\theta \right) = p_{\alpha,\beta;w}\left(-\frac{c_{\alpha,\beta}}{2w}-\theta\right), \quad f_{\alpha,\beta}\left(\theta_0-\theta\right) = f_{\alpha,\beta}\left(\theta_0+\theta\right), \notag \\
        h_{\alpha,\beta}\left(\theta_0-\theta\right) = -h_{\alpha,\beta}\left(\theta_0+\theta\right). \notag
    \end{gather}
\end{lemma}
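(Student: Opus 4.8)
The plan is to prove the three symmetry identities by direct computation, exploiting the basic symmetry of the standard normal density $\phi(-x)=\phi(x)$ and the associated reflection identity $\Phi(-x)=1-\Phi(x)$. These are the only analytic inputs needed; everything else is bookkeeping about the defined quantities $c_{\alpha,\beta}$, $\theta_0$, $f_{\alpha,\beta}$, and $h_{\alpha,\beta}$.

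First I would dispatch the polynomial identity, which is the easiest of the three. Since $p_{\alpha,\beta;w}(\theta)=w\theta^2+c_{\alpha,\beta}\theta$ is a quadratic with leading coefficient $w$ and linear coefficient $c_{\alpha,\beta}$, its axis of symmetry is at $\theta=-\frac{c_{\alpha,\beta}}{2w}$. The claim $p_{\alpha,\beta;w}\bigl(-\frac{c_{\alpha,\beta}}{2w}+\theta\bigr)=p_{\alpha,\beta;w}\bigl(-\frac{c_{\alpha,\beta}}{2w}-\theta\bigr)$ is then just the statement that a parabola is symmetric about its vertex, which follows by expanding both sides and checking that the linear-in-$\theta$ cross terms cancel. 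This requires no properties of $\phi$ or $\Phi$ at all.

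Next I would handle the identity $h_{\alpha,\beta}(\theta_0-\theta)=-h_{\alpha,\beta}(\theta_0+\theta)$, since the identity for $f_{\alpha,\beta}$ will follow from it by integration (or can be proved in parallel). Substituting $\theta_0=\frac{\alpha+\beta}{2}$, I would compute the arguments $\alpha-(\theta_0\pm\theta)$ and $\beta-(\theta_0\pm\theta)$ and observe that replacing $\theta$ by $-\theta$ (equivalently, swapping $\theta_0-\theta$ with $\theta_0+\theta$) interchanges the roles of $\alpha-\theta_0$ and $\beta-\theta_0$ while negating them: concretely, $\alpha-(\theta_0+\theta)=-\bigl(\beta-(\theta_0-\theta)\bigr)$ and similarly for the other pairing. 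Plugging these into the definition $h_{\alpha,\beta}(\theta)=\frac{\phi(\alpha-\theta)-\phi(\beta-\theta)}{\Phi(\beta-\theta)-\Phi(\alpha-\theta)}$, the evenness $\phi(-x)=\phi(x)$ makes the numerator flip sign, while the reflection $\Phi(-x)=1-\Phi(x)$ leaves the denominator invariant (the two constant $1$'s cancel in the difference), yielding the claimed odd symmetry. For $f_{\alpha,\beta}$, the cleanest route is to verify directly that the numerator $F_{\alpha,\beta}(\theta)=\Phi(\beta-\theta)-\Phi(\alpha-\theta)$ satisfies $F_{\alpha,\beta}(\theta_0-\theta)=F_{\alpha,\beta}(\theta_0+\theta)$ by the same reflection identity, whence the logarithm $f_{\alpha,\beta}$ inherits the evenness about $\theta_0$; alternatively one integrates the odd-about-$\theta_0$ function $h_{\alpha,\beta}=f_{\alpha,\beta}'$ and notes $f_{\alpha,\beta}(\theta_0)$ is the common value.

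I do not anticipate a genuine obstacle here, as all three are symmetry statements with a one-line conceptual cause. The only point demanding a little care is keeping the argument substitutions straight in the $h_{\alpha,\beta}$ computation — making sure the $\alpha$-term and $\beta$-term swap correctly under reflection about $\theta_0$ — and confirming that the $+1$ constants from $\Phi(-x)=1-\Phi(x)$ indeed cancel in the denominator difference rather than the numerator. Once the reflection identity is applied in the right place, each claim closes immediately.
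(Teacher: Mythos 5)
Your proposal is correct and matches the paper's proof, which simply states that the result is ``immediate by direct computations''; your write-up supplies exactly those computations (parabola symmetry about its vertex, and the reflection identities $\phi(-x)=\phi(x)$, $\Phi(-x)=1-\Phi(x)$ combined with the substitution $\alpha-(\theta_0+\theta)=-\bigl(\beta-(\theta_0-\theta)\bigr)$). The argument is sound, including the careful point that the swap of the $\alpha$- and $\beta$-terms under reflection about $\theta_0$ produces the sign flip in the numerator of $h_{\alpha,\beta}$ while leaving the denominator (and hence $F_{\alpha,\beta}$ and $f_{\alpha,\beta}$) invariant.
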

\begin{proof}[Proof of Lemma \ref{Lemma3.2.1}]
The proof is immediate by direct computations.
\end{proof}

In particular observe that $p_{\alpha,\beta:w}$ and $f_{\alpha,\beta}$ share the symmetry line $\theta=\theta_0$ only when $w=-\frac{c_{\alpha,\beta}}{2\theta_0}$ or when $c_{\alpha,\beta} = 0$. A second important point is to note that:
\begin{equation*}
    f_{\alpha,\beta}(0)=0=p_{\alpha,\beta;w}(0)\,\,,\,\, f_{\alpha,\beta}(2\theta_0)=0\,,\,f'_{\alpha,\beta}(0)=p'_{\alpha,\beta;w}(0). 
\end{equation*}

Then, the crucial technical point of the proof of the theorem is the following lemma.

\begin{lemma}\label{lem:h_strict_concave}
    The function $h_{\alpha,\beta}$ is strictly convex on $(-\infty,\theta_0)$ and strictly concave on $(\theta_0,+\infty)$ for all $\alpha < \beta$. 
\end{lemma}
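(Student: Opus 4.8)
The plan is to deduce everything from the sign of $h_{\alpha,\beta}''$, combined with the antisymmetry already recorded in Lemma~\ref{Lemma3.2.1}. First I would make the probabilistic meaning explicit: since $\ln\mathbb{E}\big[e^{\theta Y_\trunc}\big]=\tfrac{\theta^2}{2}+f_{\alpha,\beta}(\theta)$, the exponentially tilted law with density proportional to $e^{\theta y}\phi(y)$ on $(\alpha,\beta)$ is precisely a unit-variance Gaussian of mean $\theta$ truncated to $(\alpha,\beta)$, and its mean, variance and third cumulant equal $\theta+h_{\alpha,\beta}(\theta)$, $1+h_{\alpha,\beta}'(\theta)$ and $h_{\alpha,\beta}''(\theta)$. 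Thus $h_{\alpha,\beta}''(\theta)$ is the third central moment of that truncated Gaussian, and the lemma is equivalent to saying it is left-skewed exactly when its pre-truncation mean $\theta$ lies to the right of the window centre $\theta_0$. Concretely, writing $u:=\alpha-\theta$, $v:=\beta-\theta$ and $D:=\Phi(v)-\Phi(u)>0$, differentiating the displayed formula for $h_{\alpha,\beta}'$ once more gives $h_{\alpha,\beta}''=\mathcal N/D^3$ with
\[ \mathcal N=\big[(u^2-1)\phi(u)-(v^2-1)\phi(v)\big]D^2-3\big(u\phi(u)-v\phi(v)\big)\big(\phi(u)-\phi(v)\big)D+2\big(\phi(u)-\phi(v)\big)^3, \]
so the goal reduces to proving that $\mathcal N$ has the same sign as $u+v=2(\theta_0-\theta)$.

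Next I would use the symmetry. Lemma~\ref{Lemma3.2.1} gives $h_{\alpha,\beta}(\theta_0+t)=-h_{\alpha,\beta}(\theta_0-t)$, so $h_{\alpha,\beta}$ is odd about $\theta_0$, and hence so is its second derivative $h_{\alpha,\beta}''$; in particular $h_{\alpha,\beta}''(\theta_0)=0$ and $h_{\alpha,\beta}''(\theta_0-t)=-h_{\alpha,\beta}''(\theta_0+t)$. Probabilistically this is transparent: at $\theta=\theta_0$ the window is symmetric about the Gaussian mean, the truncated law is symmetric, and its third moment vanishes, while the two sides are mirror images. Consequently it suffices to prove the single one-sided inequality $h_{\alpha,\beta}''(\theta)<0$, equivalently $\mathcal N<0$, for every $\theta>\theta_0$; strict convexity on $(-\infty,\theta_0)$ then follows automatically.

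For the core inequality I would pass to centre/half-width coordinates $u=\lambda-\rho$, $v=\lambda+\rho$ with $\rho>0$, so that $\theta>\theta_0\iff\lambda<0$ and the reflection $\lambda\mapsto-\lambda$ is exactly $(u,v)\mapsto(-v,-u)$, under which $\mathcal N$ is odd; this reduces matters to showing $\mathcal N>0$ for $\lambda>0$. The most promising concrete handle is the integral representation obtained by centering at the truncated mean $m:=h_{\alpha,\beta}(\theta)$,
\[ D\,h_{\alpha,\beta}''(\theta)=\int_0^{\,v-m}y^3\phi(m+y)\,dy-\int_0^{\,m-u}y^3\phi(m-y)\,dy, \]
which exhibits $h_{\alpha,\beta}''$ as a competition between the right boundary distance $v-m$ and the left boundary distance $m-u$, weighted against the pointwise density comparison $\phi(m+y)$ versus $\phi(m-y)$.

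The hard part will be this last sign determination. Because $\Phi$ is not elementary, $\mathcal N$ cannot be signed by algebra alone: the two integrals pull in opposite directions — the asymmetry of the boundary distances favours a negative value while the pointwise density comparison favours a positive one — so a quantitative balance is unavoidable. I would aim to close it by fixing $\rho$ and studying the single-variable map $\lambda\mapsto\mathcal N(\lambda,\rho)$, proving it strictly monotone through its unique zero at $\lambda=0$; the natural tools are the identity $\phi'(x)=-x\phi(x)$ and the truncated-moment recursion $\mathbb{E}[Z^{k+1}]=k\,\mathbb{E}[Z^{k-1}]+\frac{u^k\phi(u)-v^k\phi(v)}{D}$, supplemented by Mills-ratio bounds to control $D$ against $\phi(u),\phi(v)$ near the endpoints. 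The degenerate cases $\alpha=-\infty$ or $\beta=+\infty$, where $\theta_0$ is not finite, lie outside this analysis and correspond to the trivial value $\|Y_\trunc\|_{\vp}^2=1$; they can be handled separately or by a limiting argument.
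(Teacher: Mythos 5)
Your reductions are correct and in fact mirror the paper's own preliminary steps: the interpretation of $h''_{\alpha,\beta}(\theta)$ as the third cumulant (third central moment) of the tilted truncated Gaussian, the oddness of $h''_{\alpha,\beta}$ about $\theta_0$ inherited from Lemma~\ref{Lemma3.2.1} (so that only the half-line $\theta>\theta_0$ needs treatment), and the identification of the numerator $\mathcal N = F'''D^2-3F''F'D+2(F')^3$ whose sign decides the lemma --- this is exactly the quantity $Z_\beta$ of Equation~\eqref{ZeroConcavity}, after the paper's translation to the symmetric window $\alpha=-\beta$. Your integral representation of $D\,h''_{\alpha,\beta}$ is also correct.

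The genuine gap is that the argument stops exactly where the lemma actually lives. Everything up to ``it suffices to prove $\mathcal N<0$ for $\theta>\theta_0$'' is bookkeeping; the sign determination itself is left as an unexecuted program: you assert that $\lambda\mapsto\mathcal N(\lambda,\rho)$ should be strictly monotone through a unique zero at $\lambda=0$, and you name tools (moment recursion, Mills-ratio bounds) without applying them. Monotonicity in $\lambda$ at fixed half-width is neither proved nor obviously true, and nothing in the proposal excludes zeros of $\mathcal N$ away from $\lambda=0$. This missing step is precisely the content of the paper's entire Appendix~\ref{sec:app:gauss}. The key idea there, absent from your proposal, is to regard $Z_\beta$ as a \emph{quadratic polynomial in the non-elementary quantity} $F_{-\beta,\beta}(\theta)$ with coefficients built from $\phi$ alone: any zero of $Z_\beta$ then forces either $F'''_{-\beta,\beta}(\theta)=0$ or $F_{-\beta,\beta}(\theta)=T_{\beta,\pm}(\theta)$ for explicit elementary functions $T_{\beta,\pm}$, and both possibilities are excluded through Lemma~\ref{LemmaFthird} and Propositions~\ref{SuffCond1}, \ref{PropSufficientCondition}, \ref{SSS} and \ref{PropRPos}, combined with a Taylor expansion at $\theta_0$ fixing the local sign and the intermediate value theorem. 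Incidentally, this shows your remark that ``$\mathcal N$ cannot be signed by algebra alone'' is too pessimistic: decoupling the $\Phi$-dependence via the quadratic-in-$F$ structure is exactly what reduces the problem to (lengthy but) elementary one-variable analysis. As written, your proposal is a correct reduction plus a plan, not a proof of the lemma.
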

\begin{proof}[Proof of Lemma \ref{lem:h_strict_concave}]
The proof follows from direct and technical computations and is detailed in Appendix~\ref{sec:app:gauss}.
\end{proof}

Using Lemma \ref{Lemma3.2.1} and Lemma \ref{lem:h_strict_concave}, we obtain the optimal variance proxy for a truncated standard normal random variable given in Theorem \ref{thm:main_trunc_normal_standard}. \smallskip

\noindent\textbf{Case $\alpha$ and $\beta$ finite.} Observe that $f_{\alpha,\beta}$ is strictly increasing on $(-\infty,\theta_0)$ and strictly decreasing on $(\theta_0,+\infty)$. Thus, it achieves its maximum at $\theta=\theta_0$. Using the strict concavity of $h_{\alpha,\beta}$ inside the interval $(\theta_0,+\infty)$ we propose a geometric proof where the optimal variance proxy case corresponds to fitting the parabola $\theta\mapsto p_{\alpha,\beta;w}(\theta)$ inside the function $f_{\alpha,\beta}$ as illustrated in Figure \ref{Figure3.2}. 
\begin{figure}
\centering
\includegraphics[width=.6\textwidth]{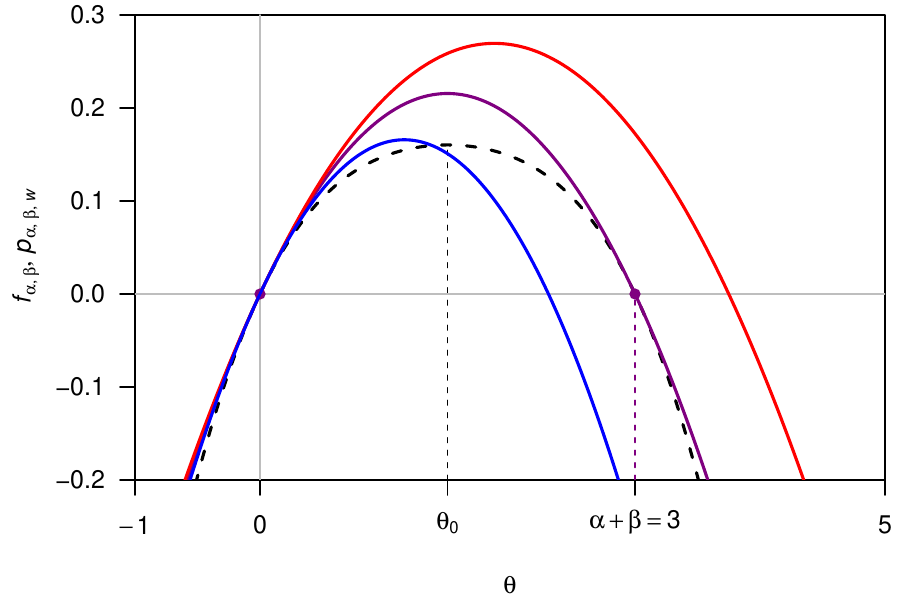}
\caption{\textbf{Illustration of the proof of Theorem~\ref{thm:main_trunc_normal_standard}} for $\alpha =-1$ and $\beta=4$, such that $\theta_0=(\alpha+\beta)/2=3/2$. The function $f_{\alpha,\beta}$ is plotted in dashed black line, while three polynomials $p_{\alpha,\beta;w}$  are plotted in color: purple corresponds to the use of the optimal variance proxy, where we can observe that $f_{\alpha,\beta}$ and $p_{\alpha,\beta;w_c}$ are tangent at the purple dot with coordinates $(\alpha+\beta,0)$. The red curve corresponds to a variance proxy that is not optimal, while the value used for the blue curve is not a variance proxy.
}
\label{Figure3.2}
\end{figure}

We now need to split into three cases.
\smallskip

\noindent\textbf{Case $\alpha$ and $\beta$ finite, $\alpha\neq -\beta$.} Let us consider the value $w_c=\frac{\phi(\beta)-\phi(\alpha)}{(\alpha+\beta)(\Phi(\beta)-\Phi(\alpha))}=-\frac{c_{\alpha,\beta}}{2\theta_0}$ corresponding to a parabola that is tangent to $f_{\alpha,\beta}$ at $(0,c_{\alpha,\beta})$ and $(2\theta_0,-c_{\alpha,\beta})$ and with a maximum at $\theta=\theta_0$.  Indeed, we have:
\begin{equation}\label{wcthreezeros} 
\begin{split}
f_{\alpha,\beta}(0)&=0=p_{\alpha,\beta;w_c}(0),\\
 f_{\alpha,\beta}(2\theta_0)&=0=4w_c\theta_0^2+c_{\alpha,\beta}\theta_0=p_{\alpha,\beta;w_c}(2\theta_0),\\
 f'_{\alpha,\beta}(\theta_0)&=0=2\theta_0w_c+c_{\alpha,\beta}=p'_{\alpha,\beta;w_c}(\theta_0),\\
f'_{\alpha,\beta}(2\theta_0)&=-c_{\alpha,\beta}=4\theta_0w_c+c_{\alpha,\beta}=p'_{\alpha,\beta;w_c}(2\theta_0),\\
f'_{\alpha,\beta}(0)&=c_{\alpha,\beta}=p'_{\alpha,\beta;w_c}(0).
\end{split}
\end{equation}

Let us define $f_{\alpha,\beta;w}\coloneqq f_{\alpha,\beta} -p_{\alpha,\beta;w}$ and $h_{\alpha,\beta;w}\coloneqq h_{\alpha,\beta} -p'_{\alpha,\beta;w}$ so that $h''_{\alpha,\beta;w}=h''_{\alpha,\beta}$ because $p_{\alpha,\beta;w}$ is a parabola. We will now prove that inequality \eqref{eq3.3} holds for $w=w_c$, i.e., we will check that $f_{\alpha,\beta;w_c} (\theta) \leq 0$ for all $\theta \in \mathbb{R}$. From Lemma~\ref{lem:h_strict_concave}, $h'_{\alpha,\beta;w}$ is strictly increasing on $(-\infty,\theta_0)$ and strictly decreasing on $(\theta_0,+\infty)$ and thus reaches its maximum at $\theta=\theta_0$. Thus, $h'_{\alpha,\beta;w}$ may have at most $2$ zeros and by Rolle's theorem, $h_{\alpha,\beta;w}$ may have at most $3$ zeros on $\mathbb{R}$. For $w=w_c$, \eqref{wcthreezeros} implies that $h_{\alpha,\beta;w_c}$ has three distinct zeros at $\theta\in\{0,\theta_0,2\theta_0\}$. This implies that $h'_{\alpha,\beta;w_c}(\theta_0)>0$ and that there exists $\theta_1<\theta_0<\theta_2$ such that $h'_{\alpha,\beta;w_c}(\theta_1)=h'_{\alpha,\beta;w_c}(\theta_2)=0$ and $h'_{\alpha,\beta;w_c}$ is strictly positive on $(\theta_1,\theta_2)$ and strictly negative on $(-\infty,\theta_1)\cup(\theta_2,+\infty)$ because of Lemma~\ref{lem:h_strict_concave}. Let us assume that $\theta_0>0$ (the case $\theta_0<0$ is similar but the set of zeros $\{0,\theta_0,2\theta_0\}$ is ordered in the opposite way) then variations of $h_{\alpha,\beta;w_c}$ imply that $0<\theta_1<\theta_0<\theta_2<2\theta_0$ and thus that $h_{\alpha,\beta;w_c}$ is strictly positive on $(-\infty,0)\cup(\theta_0,2\theta_0)$ and strictly negative on $(0,\theta_0)\cup(2\theta_0,+\infty)$. Hence $f_{\alpha,\beta;w_c}$ is strictly increasing on $(-\infty,0)\cup(\theta_0,2\theta_0)$ and strictly decreasing on $(0,\theta_0)\cup(2\theta_0,+\infty)$. Since, $f_{\alpha,\beta;w_c}(2\theta_0) = f_{\alpha,\beta;w_c}(0)=0$, we conclude that $f_{\alpha,\beta;w_c}$ is non-positive and hence that inequality \eqref{eq3.3} holds for $w=w_c$.\\

 Let us now prove that $w_c$ is optimal. Indeed, if we take by contradiction that $w<w_c$, then $f_{\alpha,\beta}(2\theta_0)=0$ while $p_{\alpha,\beta;w}(2\theta_0)=4w\theta_0^2+c_{\alpha,\beta}\theta_0<4w_c\theta_0^2+c_{\alpha,\beta}\theta_0=0$. Therefore inequality \eqref{eq3.3} is not realized in a neighborhood of $2\theta_0$. So $w_c$ is optimal.
\smallskip

\noindent\textbf{Case $\alpha$ and $\beta$ finite, $\alpha= -\beta$.} In this case, we have $c_{-\beta,\beta}=0$ and $\theta_0=0=2\theta_0$. Moreover, the two curves are always tangent at $\theta=0$ for any value of $w$:
\begin{align}
    \label{alphaminusbeta}f_{-\beta,\beta}(0)=0=p_{-\beta,\beta;w}(0) \quad\text{and}\quad
 f'_{\alpha,\beta}(0)=0=p'_{\alpha,\beta;w}(0).
\end{align}

Consider $w_c=-\frac{\beta\phi(\beta)}{2\Phi(\beta)-1}$ and let us verify that it is a variance proxy. In this case, the two curves are tangent at $\theta=0$ but the second derivatives are also the same:
\beq  \label{alphabeta} f''_{\alpha,\beta}(0)=-\frac{2\beta\phi(\beta)}{2\Phi(\beta)-1}=2w_c=p''_{\alpha,\beta;w_c}(0).\eeq
Let us define $f_{\alpha,\beta;w}\coloneqq f_{\alpha,\beta} -p_{\alpha,\beta;w}$ and $h_{\alpha,\beta;w}\coloneqq h_{\alpha,\beta} -p'_{\alpha,\beta;w}$ as in the previous case. From Lemma~\ref{lem:h_strict_concave}, $h'_{\alpha,\beta;w_c}$ is strictly increasing on $(-\infty,0)$ and strictly decreasing on $(0,+\infty)$. It achieves its maximum at $\theta=0$ and $h'_{\alpha,\beta;w_c}(0)=0$ from \eqref{alphabeta}. Thus $h'_{\alpha,\beta;w_c}$ is strictly negative on $\mathbb{R}\setminus\{0\}$ and $h_{\alpha,\beta;w_c}$ is a strictly decreasing function vanishing at $\theta=0$. Hence $f_{\alpha,\beta;w_c}$ is strictly increasing on $\mathbb{R}_{\leq 0}$ and  strictly decreasing on $\mathbb{R}_{\geq 0}$. Its maximum is thus achieved at $\theta=0$ and is null from \eqref{alphaminusbeta}. Thus, it remains negative on $\mathbb{R}$ and we conclude that inequality \eqref{eq3.3} holds for $w=w_c$.\\ 

Let us now prove that $w_c$ is optimal. Indeed, if we take by contradiction that $w<w_c$, then the two curves are still tangent at $\theta=0$ from \eqref{alphaminusbeta} but the second derivatives satisfy $f''_{\alpha,\beta}(0)=-\frac{2\beta\phi(\beta)}{2\Phi(\beta)-1}=2w_c>2w=p''_{\alpha,\beta;w}(0)$. Hence the parabola $p_{\alpha,\beta;w}$ is locally below the function $f_{\alpha,\beta}$ so that inequality \eqref{eq3.3} is not realized in a neighborhood of $\theta=0$. So $w_c$ is optimal.
\smallskip
In the end, using the fact that $s^2=2w+1$, we obtain that $s_c^2\coloneqq 2w_c+1$ is the optimal variance proxy.
\smallskip

\noindent\textbf{Case $\alpha=-\infty$ and/or $\beta=+\infty$.} Note first that the case when $\beta=-\alpha=+\infty$ boils down to not truncating the original standard Gaussian, which trivially implies strict sub-Gaussianity, while the case $\alpha$ finite and $\beta=+\infty$ is equivalent to the $\alpha=-\infty$ and $\beta$ finite by symmetry. Let us thus focus on the latter. Most of the results proved for arbitrary finite values of $\alpha$ extend by taking the limit $\alpha\to-\infty$. In particular, $h_{-\infty,\beta}$ is a strictly concave function on $\mathbb{R}$. Moreover, one can take the limit $\alpha\to -\infty$ in \eqref{eq3.3} since all quantities, including $w_c(\alpha)$, are continuous functions of $\alpha$. This provides that $s=1$ is a variance proxy (i.e. $w=0$) for the truncated Gaussian on $(-\infty,\beta)$. Let us now prove that this value is optimal. We observe that for $w<0$ we have:
\beq f_{-\infty,\beta;w}(\theta)\coloneqq f_{-\infty,\beta}(\theta) -p_{-\infty,\beta;w}(\theta)=\ln \left(\frac{\Phi(\beta-\theta)}{\Phi(\beta)}\right)-w\theta^2-c_{-\infty,\beta}\theta \overset{\theta\to -\infty}{\to} +\infty,\eeq so that \eqref{eq3.3} is obviously not verified in a neighborhood of $-\infty$. Hence $s=1$ is the optimal variance proxy when $\alpha=-\infty$. Similarly, $s=1$ is the optimal variance proxy for $\beta=+\infty$. This concludes the proof of Theorem~\ref{thm:main_trunc_normal_standard}. 
\end{proof}

\section{Truncated exponential random variables}\label{sec:expo}

This section considers the truncated version of the classical exponential distribution. Let $X\sim \text{Exp}(\lambda)$ be an exponential random variable with rate $\lambda$ (thus with mean $1/\lambda$), and let $X_\trunc$ denote the truncation of $X$ along the interval $(a,b)$ with $0 \leq a < b \leq +\infty$ (recall that the untruncated version is not sub-Gaussian). Its density, mean, and variance are of the form: 
%
\begin{align*}
    f_{X_{\trunc}}(t)&= \lambda  \frac{e^{-\lambda t}}{e^{-\lambda  a}-e^{-\lambda  b}} \boldsymbol{1}_{(a,b)}(t),\\
    \mathbb{E}[X_{\trunc}]&= \frac{1}{\lambda}+\frac{b e^{\lambda  a}-a e^{\lambda  b}}{e^{\lambda  a}-e^{\lambda b}}=\frac{1}{\lambda}+\frac{a e^{- \lambda a}-b e^{-\lambda b}}{e^{-\lambda a}-e^{-\lambda b}},\\
    \Var[X_{\trunc}]&= \frac{1}{\lambda^2}-\frac{(b-a)^2e^{\lambda(a+b)}}{\left(e^{\lambda b}-e^{\lambda a}\right)^2}.
\end{align*}

It is easy to see that $b$ must be finite in order for $X_\trunc$ to be sub-Gaussian. Indeed, we have:
\begin{equation}\mathbb{E}[\exp(\theta(X_{\trunc} - \mathbb{E}[X_{\trunc}]))]\big|_{\theta = \lambda} =\frac{\lambda(b-a)e^{-\lambda \mathbb{E}[X_{\trunc}]}}{e^{-\lambda a}-e^{-\lambda b}} \overset{\lambda\to +\infty}{\sim} e^{\lambda(a-\mathbb{E}[X_{\trunc}])} \lambda b,
\end{equation}
so that the inequality required in  Definition \ref{def:sub-Gaussian} at $\theta = \lambda$ cannot be realized for any $s^2\geq 0$ when $b\to +\infty$.

\begin{theorem}\label{thm:main_trunc_expo}
   Let $X_\trunc$ be an exponential variable with scale $\lambda$ truncated along the interval $(a,b)$ with $0\leq  a<b < \infty$. Then its optimal variance proxy is given by:
    \beqq\|X_\trunc\|_\vp^2=\frac{(b-a)\left(e^{\lambda b}+e^{\lambda a}\right)}{2\lambda\left(e^{\lambda b}-e^{\lambda a}\right)}-\frac{1}{\lambda^2}.\eeqq
    In particular, $X_\trunc$ is never strictly sub-Gaussian.
\end{theorem}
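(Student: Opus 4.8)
The plan is to run the same programme as in the Gaussian case, now for the cumulant generating function of $X_\trunc$. Set $K(\theta)\coloneqq\ln\mathbb{E}[e^{\theta X_\trunc}]$ and $g(\theta)\coloneqq K(\theta)-\theta\,\mathbb{E}[X_\trunc]$, so that $g(0)=g'(0)=0$. By Definition~\ref{def:sub-Gaussian} the optimal variance proxy is the smallest $s^2$ with $\Delta_{s^2}(\theta)\coloneqq\frac{s^2\theta^2}{2}-g(\theta)\geq0$ for all $\theta$, equivalently $\|X_\trunc\|_\vp^2=\sup_{\theta\neq0}2g(\theta)/\theta^2$, and at the optimum the parabola is tangent to $g$: the two equations $\tfrac{s^2\theta^2}{2}=g(\theta)$ and $s^2\theta=g'(\theta)$ hold at the maximiser. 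The useful structural observation is that exponential tilting of $X_\trunc$ by $e^{\theta t}$ multiplies the density $\propto e^{-\lambda t}$ by $e^{\theta t}$, i.e. produces again a truncated exponential on $(a,b)$, now of rate $\lambda-\theta$. Hence $K'(\theta)$ is the mean of that tilted law (increasing from $a$ to $b$) and $K''(\theta)$ is its variance.

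Next I would extract the relevant symmetry, the analogue of Lemma~\ref{Lemma3.2.1}. Under the reflection $t\mapsto a+b-t$ the density $\propto e^{-\lambda t}$ maps to $\propto e^{\lambda t}$, which is the tilt at rate $-\lambda$; this shows that $K''$ is \emph{even} about $\theta=\lambda$ and that $K'(\theta)-\frac{a+b}{2}$ is \emph{odd} about $\theta=\lambda$ (the tilt at $\theta=\lambda$ being the uniform law on $(a,b)$, with mean $\frac{a+b}{2}$). From the oddness one gets, for all $u$, $\int_{-u}^{u}\bigl(K'(\lambda+w)-K'(\lambda)\bigr)dw=0$ and $K'(2\lambda)+K'(0)=2K'(\lambda)$, which yield the two identities $g(2\lambda)=2\lambda\,g'(\lambda)$ and $g'(2\lambda)=2g'(\lambda)$. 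Consequently the candidate tangency point is $\theta^\ast=2\lambda$ (not $\lambda$): with $s_c^2\coloneqq g'(\lambda)/\lambda$ one checks $\Delta_{s_c^2}(0)=\Delta_{s_c^2}'(0)=0$ and $\Delta_{s_c^2}(2\lambda)=\Delta_{s_c^2}'(2\lambda)=0$, so $\Delta_{s_c^2}$ has double roots at $0$ and $2\lambda$. Since $g'(\lambda)=\frac{a+b}{2}-\mathbb{E}[X_\trunc]$, substituting the stated formula for $\mathbb{E}[X_\trunc]$ simplifies $s_c^2=g'(\lambda)/\lambda$ into exactly the closed form of the theorem.

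The main obstacle is the exact analogue of Lemma~\ref{lem:h_strict_concave}: I must show that $K'$ is strictly convex on $(-\infty,\lambda)$ and strictly concave on $(\lambda,+\infty)$, i.e. that $K''$ is strictly unimodal with peak at $\lambda$. Writing $s=\theta-\lambda$ and (by translation) taking $a=0$, this reduces to proving that the third central moment of the law $\propto e^{st}$ on $(0,b-a)$ is negative for $s>0$ (and hence, by the oddness above, positive for $s<0$), which is the kind of direct but technical computation deferred to the appendix. Granting this lemma, $\Delta_{s_c^2}''=s_c^2-K''$ has at most two zeros, so $\Delta_{s_c^2}'$ has at most three; together with the two known zeros at $0,2\lambda$ and a third supplied by Rolle in $(0,2\lambda)$, it has exactly three zeros $0<\theta_m<2\lambda$. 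Because $g$ grows only linearly at $\pm\infty$ while the parabola grows quadratically, $\Delta_{s_c^2}(\theta)\to+\infty$ at both ends, forcing the min--max--min pattern; thus $0$ and $2\lambda$ are minima with value $0$ and $\Delta_{s_c^2}\geq0$, so $s_c^2$ is a variance proxy. Optimality is immediate: for $s^2<s_c^2$ one has $\Delta_{s^2}(2\lambda)=2\lambda^2(s^2-s_c^2)<0$, violating \eqref{eq:subGaussian-def} at $\theta=2\lambda$.

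Finally, for the strict sub-Gaussianity claim I would compare $s_c^2$ with $\Var[X_\trunc]=K''(0)$. Since $0$ is a local minimum of $\Delta_{s_c^2}$ we already get $\Delta_{s_c^2}''(0)=s_c^2-\Var[X_\trunc]\geq0$. To rule out equality, note $\Delta_{s_c^2}'''(0)=-K'''(0)$, where $K'''(0)=\mathbb{E}[(X_\trunc-\mathbb{E}[X_\trunc])^3]$ is the third central moment; this is nonzero because the truncated exponential, having a strictly monotone density, is never symmetric. If $\Delta_{s_c^2}''(0)$ vanished, the nonzero cubic term would make $\theta=0$ a point of inflection rather than a minimum, a contradiction; hence $s_c^2>\Var[X_\trunc]$ for all admissible $a<b$, so $X_\trunc$ is never strictly sub-Gaussian. (As an alternative to this skewness argument, one can verify $s_c^2-\Var[X_\trunc]>0$ directly by setting $t=\lambda(b-a)$ and checking the resulting one-variable inequality for $t>0$, with both sides vanishing in the uniform limit $t\to0$.)
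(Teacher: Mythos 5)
Your proposal is correct, but it takes a genuinely different route from the paper's own proof of this theorem. The paper (via Theorem~\ref{thm:main_trunc_expo_standard}) works at the level of the moment generating function difference $g_{\alpha,\beta,s}$ itself: it proves through the discriminant analysis of Lemma~\ref{lem:char1} that $G_{\alpha,\beta,s}$ has at most two local maxima, characterizes the optimal proxy as the unique solution of the two-equation system \eqref{Eqexpotosatisfy} within explicit bounds $s_{\text{inf}}<\|Y_\trunc\|_{\vp}\leq s_1$, and only then plugs in the explicit solution $\theta_c=2$. You instead transplant the paper's \emph{Gaussian} strategy (Section~\ref{sec:gauss}) to the exponential case: work with the cumulant generating function, prove a convex/concave splitting of $K'$ (the analogue of Lemma~\ref{lem:h_strict_concave}), and fit the parabola tangentially at the two points $0$ and $2\lambda$. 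The enabling observation---that exponential tilting maps truncated exponentials on $(a,b)$ to truncated exponentials on $(a,b)$, so the tilted family is symmetric about $\theta=\lambda$ where it passes through the uniform law---has no counterpart in the paper, and it is what makes the tangency point $\theta^\ast=2\lambda$ (the paper's $\theta_c=2$ after scaling) and the value $s_c^2=\frac{1}{\lambda}\left(\frac{a+b}{2}-\mathbb{E}[X_\trunc]\right)$ conceptually transparent; your simplification of this expression does reproduce the stated closed form. Your deferred technical lemma (the third cumulant of the tilt $\propto e^{st}$ on $(0,b-a)$ is negative for $s>0$) is true, and after writing the tilted cumulant generating function in terms of $\coth$ it reduces exactly to the inequality $\sinh^3 y>y^3\cosh y$, which the paper proves as Lemma~\ref{LemmaBs0} in its Gaussian appendix---so your route needs a strictly lighter technical appendix than the paper's Lemma~\ref{lem:char1}, at the price of losing the explicit bracketing $[s_{\text{inf}},s_1]$ that the paper's analysis yields as a byproduct. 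Two small points to tighten. First, the "forcing" of the min--max--min pattern should be spelled out: it follows because a non-sign-changing zero of $\Delta'_{s_c^2}$ at $0$ or $2\lambda$ would, together with Rolle's theorem applied between the three known zeros, force $\Delta''_{s_c^2}=s_c^2-K''$ to have at least three zeros, contradicting the unimodality of $K''$. Second, the claim that $K'''(0)\neq 0$ "because the truncated exponential is never symmetric" is not a valid inference in general (asymmetric laws can have vanishing third central moment); but no repair is needed, since your own key lemma already gives $K'''(0)>0$ (the tilt at $\theta=0$ corresponds to $s=-\lambda<0$), so the conclusion $\|X_\trunc\|_{\vp}^2>\Var[X_\trunc]$ stands; your alternative one-variable verification in $t=\lambda(b-a)$ is precisely what the paper does in Appendix~\ref{NotStrictlySubGaussian}.
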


Similarly as in the Gaussian case, in order to prove Theorem~\ref{thm:main_trunc_expo} we restrict ourselves to the case of a standard exponential random variable $Y\sim \text{Exp}(1)$. Indeed, let $Y\sim  \text{Exp}(1)$ and $Y_\trunc$ be its truncation on $[\alpha,\beta]$ with $0\leq \alpha< \beta <+\infty$, then defining $X =  \frac{Y}{\lambda}$, $(a,b)=\left(\frac{\alpha}{\lambda}, \frac{\beta}{\lambda}\right)$ and $X_\trunc$ the truncation of $X$ on $[a,b]$, we have that $X_\trunc =  \frac{Y_\trunc}{\lambda}$. Thus, we get:

\begin{gather*} \mathbb{E}\left[\text{exp}\Big(\theta(X_{\trunc}-\mathbb{E}[X_{\trunc}]))\right]=\mathbb{E}\left[\text{exp}\left( \frac{\theta}{\lambda}(Y_{\trunc}-\mathbb{E}[Y_{\trunc}])\right)\right] \,\,,\,\, \forall\, \theta\in \mathbb{R},
\end{gather*}

from where it follows that $\lambda^2\|X_\trunc\|_\vp^2=\|Y_\trunc\|_\vp^2$. Hence an equivalent formulation is:

\begin{theorem}\label{thm:main_trunc_expo_standard}
   Let $Y_\trunc$ be an exponential variable with mean $1$ truncated along the interval $(\alpha,\beta)$ with $0\leq  \alpha<\beta < +\infty$. Then its variance proxy is given by:
    \beqq
    \|Y_\trunc\|_\vp^2=\frac{(\beta-\alpha)\left(e^{\alpha}+e^{\beta}\right)}{2\left(e^{\beta}-e^{\alpha}\right)}-1.
    \eeqq
    In particular, $Y_\trunc$ is \emph{never strictly} sub-Gaussian.
\end{theorem}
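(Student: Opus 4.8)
The plan is to transpose, almost verbatim, the tangent-parabola strategy used for the Gaussian case. Write $m \coloneqq \mathbb{E}[Y_\trunc]$ and $f(\theta) \coloneqq \ln \mathbb{E}\big[e^{\theta(Y_\trunc-m)}\big]$, so that $\|Y_\trunc\|_\vp^2$ is the smallest $s^2$ with $f(\theta)\le \tfrac12 s^2\theta^2$ for all $\theta$, i.e. $s^2=\sup_{\theta\neq 0} 2f(\theta)/\theta^2$. Here $f(0)=f'(0)=0$, $f''(\theta)$ is the variance of the exponentially tilted law (hence $f$ is strictly convex), and $f$ grows only linearly as $\theta\to\pm\infty$ because the tilted mass concentrates at $\beta$, resp.\ $\alpha$; consequently $2f(\theta)/\theta^2\to 0$ at infinity and the supremum is attained at a finite nonzero tangency point $\theta_c$, determined by $f(\theta_c)=\tfrac12 s^2\theta_c^2$ and $f'(\theta_c)=s^2\theta_c$. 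The entire argument rests on the claim that $\theta_c=2$.

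First I would record the facts that single out $\theta=2$. The law tilted by $\theta$ is an exponential of rate $1-\theta$ truncated to $(\alpha,\beta)$, and the reflection $t\mapsto \alpha+\beta-t$ exchanges rate $1-\theta$ with rate $\theta-1$, hence exchanges the tilts at $\theta$ and $2-\theta$. Writing $f'(\theta)=\mu(\theta)-m$ for the tilted mean, this gives $\mu(\theta)+\mu(2-\theta)=\alpha+\beta$, so $f'(\theta)+f'(2-\theta)$ is constant and $f$ is symmetric about $\theta=1$, exactly as $f_{\alpha,\beta}$ is symmetric about $\theta_0$ in Lemma~\ref{Lemma3.2.1}. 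At the uniform tilt $\theta=1$ one has $\mu(1)=(\alpha+\beta)/2$, and a direct computation gives the clean identity $\mathbb{E}[e^{2Y_\trunc}]=(e^\beta-e^\alpha)/(e^{-\alpha}-e^{-\beta})=e^{\alpha+\beta}$, i.e.\ $f(2)=\alpha+\beta-2m$. Setting $s^2\coloneqq\tfrac12 f(2)=\tfrac{\alpha+\beta}{2}-m$, the three identities $f'(0)=0$, $f'(1)=\tfrac{\alpha+\beta}{2}-m=s^2$ and $f'(2)=f(2)=2s^2$ hold, so the parabola $p(\theta)=\tfrac12 s^2\theta^2$ meets $f$ with matching derivative at $\theta\in\{0,1,2\}$: the exact analogue of \eqref{wcthreezeros} with $(\theta_0,2\theta_0)$ replaced by $(1,2)$. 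Expanding $m$ then yields the announced closed form $\tfrac{(\beta-\alpha)(e^\alpha+e^\beta)}{2(e^\beta-e^\alpha)}-1$.

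The technical core, parallel to Lemma~\ref{lem:h_strict_concave}, is to prove that $h\coloneqq f'$ is strictly convex on $(-\infty,1)$ and strictly concave on $(1,+\infty)$, equivalently that the tilted third cumulant $f'''(\theta)$ is positive for $\theta<1$ and negative for $\theta>1$. By the reflection symmetry this cumulant is antisymmetric about $\theta=1$ and vanishes there (the uniform law being symmetric), so the statement reduces to showing that a truncated exponential of \emph{positive} rate is positively skewed; this sign computation is the step I expect to be the main obstacle, to be handled by explicit differentiation as in the Gaussian appendix. Granting this lemma, the argument of the Gaussian ``Case $\alpha\neq-\beta$'' transfers line for line: since $(h-p')''=f'''$ changes sign only at $1$, $h-p'$ has at most three zeros by Rolle's theorem, and by the three identities above they are exactly $0,1,2$; tracking signs (using $h-p'\to+\infty$ as $\theta\to-\infty$ and $\to-\infty$ as $\theta\to+\infty$) shows $f-p$ is increasing on $(-\infty,0)\cup(1,2)$ and decreasing on $(0,1)\cup(2,+\infty)$, so $f-p\le 0$ everywhere with equality only at $0$ and $2$. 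Optimality is immediate because $f(2)=2s^2$ forces any smaller proxy to violate the bound near $\theta=2$.

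Finally, to show $Y_\trunc$ is \emph{never} strictly sub-Gaussian I will establish $s^2>\Var[Y_\trunc]$ for every admissible $(\alpha,\beta)$. Since $\theta_c=2\neq 0$, the supremum $\sup_{\theta\neq 0}2f(\theta)/\theta^2$ strictly exceeds its limiting value $f''(0)=\Var[Y_\trunc]$ at the origin; concretely $\tfrac{d}{d\theta}\big(2f(\theta)/\theta^2\big)\to\tfrac13 f'''(0)$ as $\theta\to 0$, which is positive by the skewness lemma, so the ratio already increases at $0$. Equivalently, using translation invariance of the variance proxy to reduce to $\alpha=0$ and setting $d=(\beta-\alpha)/2$, the inequality $s^2>\Var[Y_\trunc]$ becomes the elementary one-variable statement $d\coth d+d^2/\sinh^2 d>2$ for all $d>0$. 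Either route shows the gap is strict, so strict sub-Gaussianity never occurs, in contrast with the symmetric Gaussian case.
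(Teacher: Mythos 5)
Your proposal is correct in substance, but it follows a genuinely different route from the paper's. The paper never works with the log-MGF: it studies the difference of MGFs $g_{\alpha,\beta,s}$ directly, proves a zero-counting lemma (Lemma~\ref{lem:char1}) via the discriminant of an auxiliary quadratic $P_{\alpha,\beta,s}$, characterizes the optimal proxy as the unique solution of the system \eqref{Eqexpotosatisfy} with side constraints, and only then verifies that $\theta_c=2$ with the announced $s^2$ solves that system. You instead transpose the Gaussian tangent-parabola argument to the log-MGF $f$, and the payoff is conceptual: the reflection $t\mapsto\alpha+\beta-t$ exchanges the tilts at $\theta$ and $2-\theta$, which explains \emph{a priori} why the tangency point is $\theta_c=2$ and why $\mathbb{E}[e^{2Y_\trunc}]=e^{\alpha+\beta}$ --- facts that enter the paper's proof only as an unexplained verification --- and it unifies the two theorems of the paper under a single strategy. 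Your identities at $\theta\in\{0,1,2\}$, the Rolle count, the optimality step, and both of your routes to $\|Y_\trunc\|_{\vp}^2>\Var[Y_\trunc]$ are correct; in particular your hyperbolic inequality $d\coth d+d^2/\sinh^2 d>2$ is, after clearing denominators and setting $\epsilon=2d$, exactly the inequality $K(\epsilon)>0$ proved in Appendix~\ref{NotStrictlySubGaussian}.

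Two remarks. First, the step you flag as the main obstacle --- strict positive skewness of a truncated exponential with positive rate --- is indeed the only real gap, and it is fillable: it is precisely the paper's Lemma~\ref{Lemmaepsilon} in disguise. By translation invariance and the scaling $u=\lambda L$ (rate $\lambda=1-\theta$, interval length $L=\beta-\alpha$), the third cumulant of the tilted law equals $\lambda^{-3}\bigl(2-\tfrac{u^3e^u(e^u+1)}{(e^u-1)^3}\bigr)$, so positivity for \emph{all} tilts $\theta<1$ is equivalent to the single one-variable inequality $P(u)=2(e^u-1)^3-u^3e^u(e^u+1)>0$ for $u>0$, which is literally the paper's $P$. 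Be warned that it is delicate: the two sides agree through order $u^6$, the difference being $\tfrac{1}{120}u^7+O(u^8)$, which is why the paper needs four successive differentiations; expect the same amount of work. Second, a small inaccuracy: $f$ is \emph{not} symmetric about $\theta=1$; one has $f(\theta)-f(2-\theta)=(\theta-1)(\alpha+\beta-2m)$, so only $f$ minus a linear term (equivalently, the difference $f-p$ at your candidate $s^2$) is symmetric about $\theta=1$. What you actually use downstream --- antisymmetry of $f'''$ about $\theta=1$, and the values $f'(1)=\tfrac{\alpha+\beta}{2}-m$ and $f'(2)=f(2)=\alpha+\beta-2m$ --- is correct, so nothing breaks, but the statement should be fixed before it is written up.
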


As discussed above, the exponential random variable obtained with $\beta=+\infty$ and any $\alpha$ is not sub-Gaussian. As a result, the variance proxy given in Theorem~\ref{thm:main_trunc_expo_standard} needs to compensate for this lack of sub-Gaussianity by diverging to $+\infty$ as $\beta\to+\infty$. More specifically, we have that $\|Y_\trunc\|_\vp^2$ is equivalent to $\beta/2$ as $\beta\to+\infty$, for fixed $\alpha$. 
Theorem \ref{thm:main_trunc_expo} and Theorem \ref{thm:main_trunc_expo_standard} are illustrated in Figure~\ref{fig:illustration-expo}.
\begin{figure}
    \centering
    \includegraphics[width=.7\textwidth]{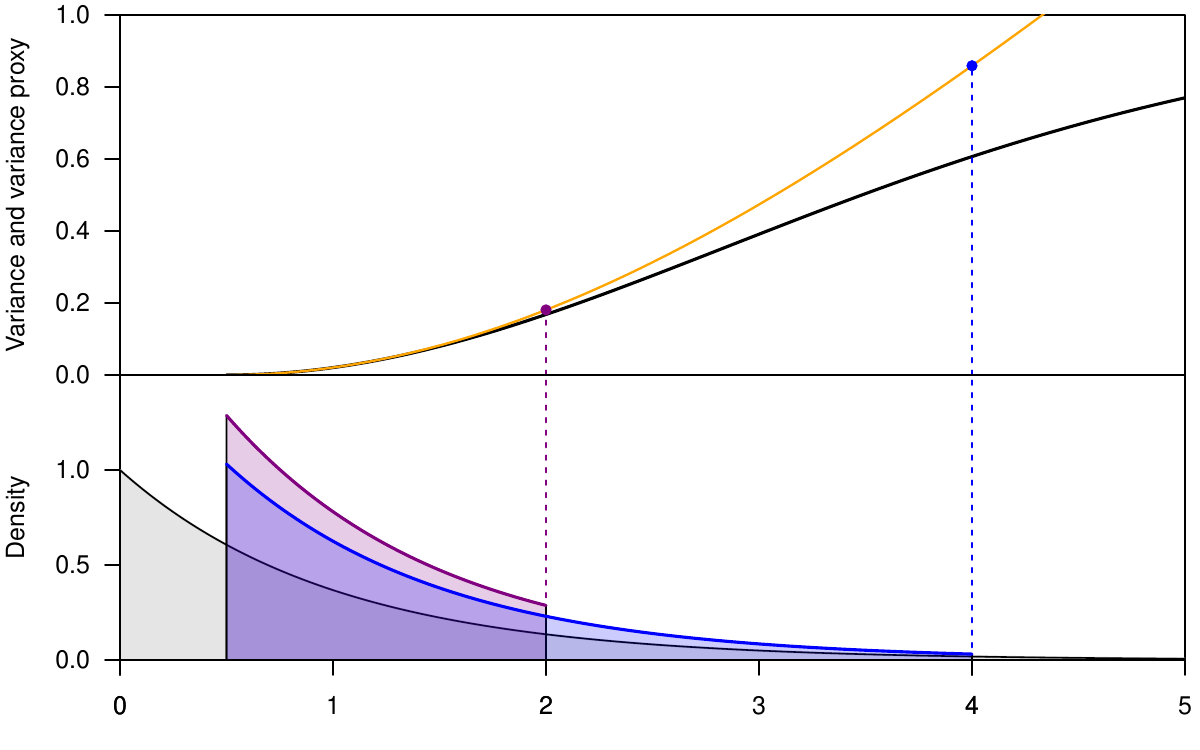}
    \caption{\textbf{Illustration of Theorem \ref{thm:main_trunc_expo} and Theorem \ref{thm:main_trunc_expo_standard}.}  The top panel represents the variance (black curve) and variance proxy (orange curve) of truncated standard exponential random variables on intervals $(\alpha,\beta)$ with a fixed value of $\alpha=1/2$ and varying values for $\beta\in\{2, 4\}$ (in violet and blue). The corresponding distributions are depicted on the bottom panel. Note that truncated standard exponential random variables are never strictly sub-Gaussian (the black and orange curves on the top panel never coincide).}
    \label{fig:illustration-expo}
\end{figure}

 The proof of Theorem \ref{thm:main_trunc_expo_standard} relies on a direct study of the difference
\begin{equation}
    \label{eq:g-definition}
    g_{\alpha,\beta,s}(\theta) \coloneqq \mathbb{E}[\exp(\theta(Y_\trunc - \mathbb{E}[Y_\trunc]))] - \exp\left( \frac{s^2\theta^2}{2} \right).
\end{equation} 

The most important step is to verify that the optimal variance proxy is characterized by the unique pair $(\theta_c, \|Y_{\trunc}\|_{\vp})$ that solves the following system of equations:

\begin{equation}\label{Eqexpotosatisfy}
    g_{\alpha,\beta,\|Y_{\trunc}\|_{\vp}}(\theta_c)=0 \,\, \text{ and } g'_{\alpha,\beta,\|Y_{\trunc}\|_{\vp}}(\theta_c)=0 \,\text{ with }s_{\text{inf}}(\alpha,\beta)<\|Y_{\trunc}\|_{\vp}\leq s_1(\alpha,\beta)\,\text{ and }\,\theta_c\neq 0,
\end{equation}

where:

\begin{gather}
     g_{\alpha,\beta,s}(\theta) = \frac{e^{\frac{1}{2}s^2\theta^2}}{e^{\beta}-e^{\alpha}}G_{\alpha,\beta,s}(\theta), \notag \\
     G_{\alpha,\beta,s}(\theta)=  e^{\alpha}-e^{\beta} + \frac{1}{\theta-1}\left(e^{-\frac{1}{2}s^2\theta^2 +(\beta- \mathbb{E}[Y_{\trunc}])\theta +\alpha} -e^{-\frac{1}{2}s^2\theta^2 +(\alpha- \mathbb{E}[Y_{\trunc}])\theta +\beta}\right), \notag\\
   s_{\text{inf}}(\alpha,\beta)\coloneqq \sqrt{1- \frac{(\beta-\alpha)^2e^{\alpha+\beta}}{\left(e^{\beta}-e^{\alpha}\right)^2}}\,,\,\, s_1(\alpha,\beta)\coloneqq \sqrt{\frac{(\beta-\alpha)^2(\beta-\mathbb{E}[Y_{\trunc}]+2) - \delta_{\alpha,\beta}}{(\beta-\alpha)^2+12}}, \notag\\
   \delta_{\alpha,\beta} = 2(\beta-\alpha)\left(-3\mathbb{E}[Y_{\trunc}]^2+(6\beta-(\beta-\alpha)^2)\mathbb{E}[Y_{\trunc}] +\beta^3-2\alpha\beta^2+\alpha^2\beta+\alpha^2-2\alpha\beta-2\beta^2\right)^{\frac{1}{2}}.\label{exponentialdefinitions}
\end{gather}

Furthermore, by uniqueness and observing that:

\begin{equation*}
    \theta_c=2 \,\,,\,\, \|Y_{\trunc}\|_{\vp}^2=\frac{{ (\beta-\alpha+2)e^{\alpha}+(\beta-\alpha-2)e^{\beta}}}{{2}(e^{\beta}-e^{\alpha})}
\end{equation*}

is a solution of \eqref{Eqexpotosatisfy} we conclude the computation of $\|Y_{\trunc}\|_{\vp}^2$ . Eventually, the statement that $Y_\trunc$ is never strictly sub-Gaussian follows from a direct analysis of the difference between $\|Y_{\trunc}\|_{\vp}^2$ and $\Var[Y_{\trunc}]$ that is detailed in Appendix~\ref{NotStrictlySubGaussian}.

\begin{proof}[Proof of Theorem \ref{thm:main_trunc_expo_standard}] 
We will prove that $\|Y_{\trunc}\|_{\vp}^2$ is the optimal variance proxy if and only if it is the solution to (\ref{Eqexpotosatisfy}) and that such solution is unique. First note that $g_{\alpha,\beta,s}$ and $G_{\alpha,\beta,s}$ have the same sign and that they are smooth functions on~$\mathbb{R}$\footnote{Although the function $g_{\alpha,\beta,s}$ defined in Equation~\eqref{eq:g-definition} itself is undefined at $\theta=1$, setting it equal to $-e^{s^2/2}$ makes it smooth. In the proof of Theorem~\ref{thm:main_trunc_expo_standard}, we may have to avoid $\theta=1$ at some places. Still, it is obvious by continuity of $g_{\alpha,\beta,s}$ that the inequality of Definition~\ref{def:sub-Gaussian} extends at $\theta=1$ if it is proved valid in a neighborhood of this point.}. 
Clearly $s^2$ is a variance proxy if $G_{\alpha,\beta,s}(\theta) \leq 0$ for all $\theta \in \mathbb{R}$. In particular, we have:

\begin{gather}
    g_{\alpha,\beta,s}(0)=0 \,\,,\,\, g_{\alpha,\beta,s}'(0)=0 \,\,,\,\, \forall s>0, \notag\\
    \label{IdG} G_{\alpha,\beta,s}(0)=G'_{\alpha,\beta,s}(0)=0\,\,,\,\, \underset{\theta\to \pm\infty}{\lim} G_{\alpha,\beta,s}(\theta)=
    e^{\alpha}-e^{\beta}<0
\end{gather} 

Thus, another necessary condition for $s^2$ to be a variance proxy is that $G''_{\alpha,\beta,s}(0)\leq 0$, i.e., we get the claimed lower bound for the optimal variance proxy:

\beq\label{sinf} \|Y_{\trunc}\|_{\vp}\geq s_{\text{inf}}(\alpha,\beta)\coloneqq \sqrt{1- \frac{(\beta-\alpha)^2e^{\alpha+\beta}}{\left(e^{\beta}-e^{\alpha}\right)^2}}.\eeq

In the following, we shall thus assume that $s\geq s_{\text{inf}}(\alpha,\beta)$. A simple computation yields that $G'_{\alpha,\beta,s}$ is given by

\begin{equation}\label{Connectiongh}
    G'_{\alpha,\beta,s}(\theta)= \frac{1}{(\theta-1)^2}e^{\alpha \theta+\beta}e^{-\frac{1}{2}s^2\theta^2-\mathbb{E}[Y_{\trunc}]\theta}h_{\alpha,\beta,s}(\theta),
\end{equation}
where:
\begin{gather*}
    h_{\alpha,\beta,s}(\theta)\coloneqq  \left(-s^2\theta^2+(s^2+\beta-\mathbb{E}[Y_{\trunc}])\theta+\mathbb{E}[Y_{\trunc}]-\beta-1\right)e^{(\theta-1)(\beta-\alpha)}\\\notag
    +s^2\theta^2-(s^2+\alpha- \mathbb{E}[Y_{\trunc}])\theta -\mathbb{E}[Y_{\trunc}]+\alpha+1 \notag \\
  h'''_{\alpha,\beta,s}(\theta)=\frac{e^{-(\theta-1)(\beta-\alpha)}}{(\beta-\alpha)}P_{\alpha,\beta,s}(\theta),
\end{gather*}  
with:
\begin{gather*}
    P_{\alpha,\beta,s}(\theta)\coloneqq A_{\alpha,\beta,s}\theta^2+ B_{\alpha,\beta,s} \theta +C_{\alpha,\beta,s},\notag \\
A_{\alpha,\beta,s}\coloneqq -s^2(\beta-\alpha)^2, \notag \\
B_{\alpha,\beta,s}\coloneqq (\alpha-\beta)(\alpha-\beta+6)s^2+(\beta-\alpha)^2(\beta-\mathbb{E}[Y_{\trunc}]),\notag \\
C_{\alpha,\beta,s}\coloneqq 3(\beta-\alpha-2)s^2+(\beta-\alpha)\left((\beta-\alpha-3)\mathbb{E}[Y_{\trunc}]+\alpha\beta-\beta^2+\alpha+2\beta \right).
\end{gather*} 
The discriminant of the second-degree polynomial $P_{\alpha,\beta,s}$ is crucial in establishing the previously mentioned characterization of the optimal variance proxy, as shown in Lemma \ref{lem:char1}. It is given by:
\begin{align*}
    \Delta_{\alpha,\beta,s}\coloneqq &B_{\alpha,\beta,s}^2-4A_{\alpha,\beta,s}C_{\alpha,\beta,s}\\
=&(\beta-\alpha)^2\left((\beta-\alpha)^2+12)s^4-2(\beta-\alpha)^2(\beta-\mathbb{E}[Y_{\trunc}]+2)s^2+(\beta-\alpha)^2(\beta-\mathbb{E}[Y_{\trunc}])^2\right).
\end{align*}
\begin{lemma}\label{lem:char1}
If $\Delta_{\alpha,\beta,s}\leq 0$, then $s^2$ is always a variance proxy because $G_{\alpha,\beta,s}$ has a unique maximum at $\theta=0$ which is vanishing.
On the contrary, if $\Delta_{\alpha,\beta,s}>0$ then $G_{\alpha,\beta,s}$ has exactly two local maximums. One at $\theta=0$ which is vanishing and also another one denoted by $\theta_c(\alpha,\beta,s)$ which is nonzero. 
\end{lemma}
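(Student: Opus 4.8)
The plan is to translate the statement about the local maxima of $G_{\alpha,\beta,s}$ into a statement about the sign changes of $h_{\alpha,\beta,s}$, and then to control those through the sign of the quadratic $P_{\alpha,\beta,s}$, whose discriminant is exactly $\Delta_{\alpha,\beta,s}$. From \eqref{Connectiongh}, $G'_{\alpha,\beta,s}(\theta)$ equals $h_{\alpha,\beta,s}(\theta)$ times a strictly positive prefactor (for $\theta\neq 1$), so the critical points of $G_{\alpha,\beta,s}$ are the zeros of $h_{\alpha,\beta,s}$ and, crucially, the local maxima of $G_{\alpha,\beta,s}$ are precisely the points where $h_{\alpha,\beta,s}$ changes sign from $+$ to $-$.

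First I would record the boundary and base-point data that drive everything. From the explicit formula for $h_{\alpha,\beta,s}$ one reads off $h_{\alpha,\beta,s}(0)=0$, together with $h_{\alpha,\beta,s}(\theta)\to+\infty$ as $\theta\to-\infty$ (the polynomial part dominates) and $h_{\alpha,\beta,s}(\theta)\to-\infty$ as $\theta\to+\infty$ (the exponential times $-s^2\theta^2$ dominates). Writing $G'_{\alpha,\beta,s}=Q\,h_{\alpha,\beta,s}$ with $Q>0$ and using $h_{\alpha,\beta,s}(0)=0$ gives $G''_{\alpha,\beta,s}(0)=Q(0)\,h'_{\alpha,\beta,s}(0)$, so the standing assumption $s\ge s_{\text{inf}}(\alpha,\beta)$, i.e. $G''_{\alpha,\beta,s}(0)\le 0$ of \eqref{sinf}, is exactly $h'_{\alpha,\beta,s}(0)\le 0$. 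These facts show that $\theta=0$ is a vanishing critical point of $G_{\alpha,\beta,s}$; that it is in fact a local maximum will follow from the sign analysis below.

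The heart of the argument is to count the remaining sign changes of $h_{\alpha,\beta,s}$, which I would do by descending the derivative chain and applying Rolle's theorem. Since $\beta>\alpha$, the prefactor in the expression for $h'''_{\alpha,\beta,s}$ is strictly positive, so $h'''_{\alpha,\beta,s}$ has the same sign as $P_{\alpha,\beta,s}$; as $P_{\alpha,\beta,s}$ is a downward parabola (leading coefficient $-s^2(\beta-\alpha)^2<0$), its sign pattern is dictated entirely by $\Delta_{\alpha,\beta,s}$. If $\Delta_{\alpha,\beta,s}\le 0$ then $P_{\alpha,\beta,s}\le 0$, so $h''_{\alpha,\beta,s}$ is strictly decreasing; combined with $h''_{\alpha,\beta,s}(\theta)\to 2s^2>0$ as $\theta\to-\infty$ and $h''_{\alpha,\beta,s}(\theta)\to-\infty$ as $\theta\to+\infty$ this yields a single zero, whence $h'_{\alpha,\beta,s}$ is unimodal and $h_{\alpha,\beta,s}$ is convex then concave. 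Feeding in $h_{\alpha,\beta,s}(0)=0$, $h'_{\alpha,\beta,s}(0)\le 0$ and the two infinite limits, I would conclude that $h_{\alpha,\beta,s}$ vanishes only at $\theta=0$, crossing there from $+$ to $-$, which gives the unique (vanishing) maximum. If instead $\Delta_{\alpha,\beta,s}>0$, then $P_{\alpha,\beta,s}$ is positive between its two roots, so $h''_{\alpha,\beta,s}$ decreases, increases, then decreases; tracking the induced zeros of $h''_{\alpha,\beta,s}$, then $h'_{\alpha,\beta,s}$, then $h_{\alpha,\beta,s}$ with the same boundary data produces exactly one additional $+$-to-$-$ crossing $\theta_c(\alpha,\beta,s)\neq 0$, i.e. a second local maximum.

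The main obstacle is this final bookkeeping: the qualitative convexity information coming from $P_{\alpha,\beta,s}$ bounds the number of sign changes of $h_{\alpha,\beta,s}$ but does not by itself fix it, since a convex-then-concave profile (or the more intricate $\Delta_{\alpha,\beta,s}>0$ shape) a priori allows either one or three zeros. The exact count must be forced by combining the monotonicity structure with the precise values $h_{\alpha,\beta,s}(0)=0$, $h'_{\alpha,\beta,s}(0)\le 0$ and the limits at $\pm\infty$, together with the signs of $h''_{\alpha,\beta,s}$ at its turning points; this is the delicate computation the discriminant is designed to settle, and it is exactly the transition $\Delta_{\alpha,\beta,s}=0$ that marks the birth of the second maximum (and the degenerate case $s=s_{\text{inf}}(\alpha,\beta)$, where $h'_{\alpha,\beta,s}(0)=0$, must be checked separately to confirm $\theta=0$ remains a maximum). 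A minor technical point to dispatch along the way is the removable singularity at $\theta=1$ in $G_{\alpha,\beta,s}$ and in the prefactors, handled by continuity as in the footnote.
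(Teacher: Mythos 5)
Your overall strategy does coincide with the paper's: pass from local maxima of $G_{\alpha,\beta,s}$ to sign changes of $h_{\alpha,\beta,s}$ via the positive prefactor in \eqref{Connectiongh}, then descend the chain $h'''\to h''\to h'\to h$ using the fact that $h'''_{\alpha,\beta,s}$ has the sign of the downward parabola $P_{\alpha,\beta,s}$, whose discriminant is $\Delta_{\alpha,\beta,s}$. However, there is a genuine gap in the final (and, as you yourself note, decisive) counting step, and it cannot be repaired from the ingredients you list. Your anchors are $h_{\alpha,\beta,s}(0)=0$, $h'_{\alpha,\beta,s}(0)\le 0$, the limits at $\pm\infty$, and the convex-then-concave profile in the case $\Delta_{\alpha,\beta,s}\le 0$. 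These facts do \emph{not} imply that $h_{\alpha,\beta,s}$ changes sign only at $\theta=0$: the cubic $\theta\mapsto -\theta(\theta-2)(\theta-4)$ satisfies every one of them (its second derivative is decreasing with a single zero, it tends to $+\infty$ at $-\infty$ and $-\infty$ at $+\infty$, it vanishes at $0$ with strictly negative slope, and it is convex then concave), yet it has three sign changes, which through \eqref{Connectiongh} would produce \emph{two} local maxima of $G_{\alpha,\beta,s}$. So the "bookkeeping" you defer is not merely delicate — it is impossible from your stated data.

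What closes this hole in the paper's proof is a fact you never invoke: $\theta=1$ is a \emph{double} zero of $h_{\alpha,\beta,s}$, i.e.\ $h_{\alpha,\beta,s}(1)=0$ and $h'_{\alpha,\beta,s}(1)=0$ (see \eqref{ObservationExpoLimit}; this comes from the $\frac{1}{\theta-1}$ structure of $G_{\alpha,\beta,s}$). In the case $\Delta_{\alpha,\beta,s}\le 0$, Rolle's theorem caps $h'_{\alpha,\beta,s}$ at two zeros; since $\theta=1$ is already one of them and is a zero of $h_{\alpha,\beta,s}$ as well, the point $\theta=1$ is a local extremum of $h_{\alpha,\beta,s}$ with value $0$, hence a zero at which $h_{\alpha,\beta,s}$ does \emph{not} change sign. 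This leaves room for exactly one crossing zero $\theta_3$, and since $G'_{\alpha,\beta,s}(0)=0$ with $\theta=0$ a local maximum of $G_{\alpha,\beta,s}$ (from $s\ge s_{\text{inf}}(\alpha,\beta)$), necessarily $\theta_3=0$, giving the unique vanishing maximum. The same device is what caps the count in the case $\Delta_{\alpha,\beta,s}>0$: there $h_{\alpha,\beta,s}$ has at most five zeros by Rolle, of which $\theta=1$ is a double zero, so at most three sign changes, and the limits rule out exactly two; your claim that tracking the zeros "produces exactly one additional crossing" skips precisely this argument. Without the double zero at $\theta=1$, no version of your counting can go through.
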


\begin{proof}[Proof of Lemma \ref{lem:char1}] 
For the proof we will need the following immediate results:

\begin{gather}
\underset{\theta\to -\infty}{\lim} h''_{\alpha,\beta,s}(\theta)=2s^2>0, \quad 
\underset{\theta\to +\infty}{\lim} h''_{\alpha,\beta,s}(\theta)=-\infty,\notag\\
\underset{\theta\to \pm\infty}{\lim} h'_{\alpha,\beta,s}(\theta)=-\infty, \quad h'_{\alpha,\beta,s}(1)=0,\notag\\
\underset{\theta\to -\infty}{\lim} h_{\alpha,\beta,s}(\theta)=+\infty, \quad
h_{\alpha,\beta,s}(1)=0,\notag\\
\underset{\theta\to +\infty}{\lim} h_{\alpha,\beta,s}(\theta)=-\infty. \label{ObservationExpoLimit}
\end{gather}

\smallskip

\noindent\textbf{Case $\Delta_{\alpha,\beta,s}\leq 0$.} Let us assume that $\Delta_{\alpha,\beta,s}\leq 0$. Since $A_{\alpha,\beta,s}<0$, we get that $h'''_{\alpha,\beta,s}$ is a strictly negative function on $\mathbb{R}$ except at one point where it vanishes when $\Delta_{\alpha,\beta,s}=0$. This implies that $h''_{\alpha,\beta,s}$ is a strictly decreasing function on $\mathbb{R}$. From the first two equations of \eqref{ObservationExpoLimit}, we conclude that there exists a unique value $\theta_1\in \mathbb{R}$ such that $h''_{\alpha,\beta,s}(\theta_1)=0$. Moreover, $h''_{\alpha,\beta,s}$ is strictly positive on $(-\infty,\theta_1)$ and strictly negative on $(\theta_1,+\infty)$. Thus, $h'_{\alpha,\beta,s}$ is a strictly increasing function on $(-\infty,\theta_1)$ and a strictly decreasing function on $(\theta_1,+\infty)$. Since we know that $h'_{\alpha,\beta,s}(1)=0$ and $\underset{\theta\to \pm\infty}{\lim} h'_{\alpha,\beta,s}(\theta)=-\infty$, we conclude that $h'_{\alpha,\beta,s}$ may have at most two zeros. We have two subcases:
\begin{itemize}
 \item $h'_{\alpha,\beta,s}$ has a double zero at $\theta_1=1$. In this case, from the limit at infinity given by \eqref{ObservationExpoLimit}, $h'_{\alpha,\beta,s}$ is strictly negative on $\mathbb{R}\setminus\{1\}$ and thus $h_{\alpha,\beta,s}$ is a strictly decreasing function on $\mathbb{R}$. From \eqref{ObservationExpoLimit}, we have $h_{\alpha,\beta,s}(1)=0$ so that it implies that $h_{\alpha,\beta,s}$ is strictly positive on $(-\infty,1)$ and strictly negative on $(1,+\infty)$ and so is $G'_{\alpha,\beta,s}$ from \eqref{Connectiongh}. But this is a contradiction to the fact that $G'_{\alpha,\beta,s}(0)=0$ in \eqref{IdG} so that this subcase may be discarded.  
\item  $h'_{\alpha,\beta,s}$ has two distinct zeros, i.e. there exists a unique $\theta_2\neq 1$ such that $h'_{\alpha,\beta,s}(\theta_2)=0$. Let us denote $\td{\theta}_1=\min(1,\theta_1)$ and $\td{\theta}_2=\max(1,\theta_2)$. Then $h'_{\alpha,\beta,s}$ is strictly negative on $(-\infty,\td{\theta}_1)\cup(\td{\theta}_2,+\infty)$ and strictly positive on $(\td{\theta}_1,\td{\theta}_2)$. Thus, $h_{\alpha,\beta,s}$ is strictly decreasing on $(-\infty,\td{\theta}_1)$, strictly increasing on $(\td{\theta}_1,\td{\theta}_2)$ and strictly decreasing on $(\td{\theta}_2,+\infty)$. But from \eqref{ObservationExpoLimit} we have $h_{\alpha,\beta,s}(1)=0$ so that $h_{\alpha,\beta,s}$ has a local extremum at $\theta=1$ that is null. Hence $h_{\alpha,\beta,s}$ has constant sign locally around $\theta=1$. Thus, there exists a unique value $\theta_3\neq 1$ such that $h_{\alpha,\beta,s}(\theta_3)=0$. Moreover, $h_{\alpha,\beta,s}$ is strictly positive on $(-\infty,\theta_3)$ and strictly negative on $(\theta_3,+\infty)$ and so is $G'_{\alpha,\beta,s}$ from \eqref{Connectiongh}. However, from \eqref{IdG} and the fact that $s\geq s_{\text{inf}}(\alpha,\beta)$, we know that $\theta=0$ is a local maximum of $G_{\alpha,\beta,s}$ so that we necessarily have $\theta_3=0$. Hence $G_{\alpha,\beta,s}$ achieves a unique maximum at $\theta=0$ and this maximum is null from \eqref{IdG} so that we conclude that $G_{\alpha,\beta,s}$ is negative on $\mathbb{R}$, i.e. that $s^2$ is a variance proxy.
\end{itemize}
\smallskip

\noindent\textbf{Case $\Delta_{\alpha,\beta,s}> 0$.} Let us assume that $\Delta_{\alpha,\beta,s}> 0$. We get that there exists two distinct values $\theta_0\neq \theta_1$ such that $h'''_{\alpha,\beta,s}(\theta_0)=h'''_{\alpha,\beta,s}(\theta_1)=0$. Moreover, since $A_{\alpha,\beta,s}<0$, we get that $h'''_{\alpha,\beta,s}$ is strictly negative on $(-\infty,\theta_1)\cup(\theta_2,+\infty)$ and strictly positive on $(\theta_1,\theta_2)$. Thus, $h''_{\alpha,\beta,s}$ is strictly decreasing on $(-\infty,\theta_1)$, strictly increasing on $(\theta_1,\theta_2)$ and strictly decreasing on $(\theta_2,+\infty)$. Since $\underset{\theta\to -\infty}{\lim} h''_{\alpha,\beta,s}(\theta)>0$ and $\underset{\theta\to +\infty}{\lim} h''_{\alpha,\beta,s}(\theta)=-\infty$ we conclude that $h''_{\alpha,\beta,s}$ may have one or two (one simple and one double zero) or three distinct zeros depending on the sign of $h'{\alpha,\beta,s}(\theta_1)$ and $h'{\alpha,\beta,s}(\theta_2)$. 

The case when $h''_{\alpha,\beta,s}$ has only one or two zeros is identical to the previous case (because in both cases $h''_{\alpha,\beta,s}$ changes sign only at the unique simple zero) and we refer to its conclusion. Let us thus assume that $h''_{\alpha,\beta,s}$ has three distinct zeros $\hat{\theta}_1<\theta_0<\hat{\theta}_2<\theta_1<\hat{\theta}_3$. Then $h''_{\alpha,\beta,s}$ is strictly positive on $(-\infty,\hat{\theta}_1)\cup(\hat{\theta}_2,\hat{\theta}_3)$ and is strictly negative on  $(\hat{\theta}_1,\hat{\theta}_2 )\cup(\hat{\theta}_3,+\infty)$. Thus, since $\underset{\theta\to \pm\infty}{\lim} h'_{\alpha,\beta,s}(\theta)=-\infty$, $h'_{\alpha,\beta,s}$ may have two distinct zeros or three distinct zeros (in this case, one double zero and two simple zeros) or four distinct zeros. 

Cases corresponding to two or three distinct zeros are identical to the previous case (note that in both cases, $h'_{\alpha,\beta,s}$ only changes sign twice) and we refer to its conclusion. Let us thus assume that $h'_{\alpha,\beta,s}$ has four distinct zeros. These zeros are necessarily simple zeros and from Rolle's Theorem, we conclude that $h_{\alpha,\beta,s}$ may have at most $5$ distinct zeros. However, we have $h_{\alpha,\beta,s}(1)=h'_{\alpha,\beta,s}(1)=0$ so that $\theta=1$ is always a double zero of $h_{\alpha,\beta,s}$. Hence, $h_{\alpha,\beta,s}$ may only change sign at most thrice. However since   $\underset{\theta\to -\infty}{\lim} h_{\alpha,\beta,s}(\theta)=\infty$ and $\underset{\theta\to +\infty}{\lim} h_{\alpha,\beta,s}(\theta)=-\infty$, it cannot change sign twice so that $h_{\alpha,\beta,s}$ may only change sign at one or three distinct locations and so is $G'_{\alpha,\beta,s}$ from \eqref{Connectiongh}. 

The case when $h_{\alpha,\beta,s}$ changes sign once is identical to the previous one and we refer it its conclusion. Let us thus assume that $G'_{\alpha,\beta,s}$ changes sign at three distinct locations denoted $\td{\theta_1}<\td{\theta}_2<\td{\theta}_3$. From the limit at infinity, $G'_{\alpha,\beta,s}$ is necessarily strictly positive on $(-\infty,\td{\theta_1})\cup(\td{\theta_2},\td{\theta_3})$ and strictly negative on $(\td{\theta_1},\td{\theta_2})\cup(\td{\theta_3},+\infty)$. This implies that $G_{\alpha,\beta,s}$ is strictly increasing on $(-\infty,\td{\theta_1})$ and then strictly decreasing on $(\td{\theta}_1,\td{\theta}_2)$, then strictly increasing on $(\td{\theta}_2,\td{\theta}_3)$ and finally strictly decreasing $(\td{\theta}_3,+\infty)$. Hence, $G_{\alpha,\beta,s}$ has two local maxima and one local minimum. However, from \eqref{IdG}, we know that $\theta=0$ is a local extremum of $G_{\alpha,\beta,s}$ and from \eqref{sinf} we have assumed $s\geq s_{\text{inf}}(\alpha,\beta)$ so that $G''_{\alpha,\beta,s}(0)<0$. Hence $\theta=0$ is necessarily a local maximum of $G_{\alpha,\beta,s}$ and its value is null from \eqref{IdG}. Hence we conclude that $G_{\alpha,\beta,s}$ has a vanishing local maximum at $\theta=0$, a local minimum with a strictly negative value, and a second local maximum at $\theta_3\neq 0$. The sign of this second local maximum determines the sign of $G_{\alpha,\beta,s}$ and hence is equivalent to deciding if $s^2$ is a variance proxy. This concludes the proof of Lemma \ref{lem:char1}.
\end{proof}

By the condition on $\Delta_{\alpha,\beta,s}$, we also get the claimed upper bound of the optimal variance proxy from this lemma. Indeed, $\Delta_{\alpha,\beta,s}\leq 0$ is equivalent to have $s\in [s_1(\alpha,\beta),s_2(\alpha,\beta)]$ while $\Delta_{\alpha,\beta,s}>0$ is equivalent to have $s\in (0,s_1(\alpha,\beta))\cup (s_2(\alpha,\beta),+\infty)$. Therefore $\|Y_{\trunc}\|_{\vp}\leq s_1(\alpha,\beta)$, with:

\begin{align*}
s_1(\alpha,\beta) \coloneqq \sqrt{\frac{(\beta-\alpha)^2(\beta-\mathbb{E}[Y_{\trunc}]+2) - \delta_{\alpha,\beta}}{(\beta-\alpha)^2+12}}, \quad s_2(\alpha,\beta) \coloneqq \sqrt{\frac{(\beta-\alpha)^2(\beta-\mathbb{E}[Y_{\trunc}]+2) +\delta_{\alpha,\beta}}{(\beta-\alpha)^2+12}},
\end{align*}

\normalsize{and} where $\delta_{\alpha,\beta}$ was defined in ($\ref{exponentialdefinitions}$). It is obvious that the position of the second maximum $\theta_c(\alpha,\beta,s)$ depends smoothly on $s$. Moreover, we know that the optimal variance proxy exists and is non-zero because of \eqref{sinf}. The previous analysis implies that for $s<\|Y_{\trunc}\|_{\vp}$ we must have $\Delta_{\alpha,\beta,s}>0$ and the existence of $\theta_c(\alpha,\beta,s) \neq 0$ with $G_{\alpha,\beta,s}(\theta_c(\alpha,\beta,s))>0$. Thus, taking the limit $s\to \|Y_{\trunc}\|_{\vp}$ with $s<\|Y_{\trunc}\|_{\vp}$ implies that  $G_{\alpha,\beta,\|Y_{\trunc}\|_vp}(\theta_c(\alpha,\beta,\|Y_{\trunc}\|_vp))=0$. Notice that $\theta_c(\alpha,\beta,\|Y_{\trunc}\|_vp)=\pm\infty$ is not possible because we have $\underset{\theta\to \pm \infty}{\lim}G_{\alpha,\beta,s}(\theta)=e^{\alpha}-e^{\beta}<0$ independent of $s$. Hence we must have

\begin{equation*}
    G_{\alpha,\beta,\|Y_{\trunc}\|_{\vp}}(\theta_c(\alpha,\beta,\|Y_{\trunc}\|_{\vp}))=0.
\end{equation*}

To finish it remains to prove that $\theta_c(\alpha,\beta,\|Y_{\trunc}\|_{\vp}) \neq 0$. Indeed, let us assume by contradiction that $\theta_c(\alpha,\beta,\|Y_{\trunc}\|_{\vp})=0$. The previous analysis shows that for $s<\|Y_{\trunc}\|_{\vp}$, $G'_{\alpha,\beta,s}$ has two distinct zeros inside $(0,\theta_c(\alpha,\beta,s))$ and thus by Rolle's theorem that $G''_{\alpha,\beta,s}$ has at least one zero in $(0,\theta_c(\alpha,\beta,s))$. Thus at the limit, we would get $G''_{\alpha,\beta,\|Y_{\trunc}\|_{\vp}}(0)=0$, i.e. $\theta=0$ would be at least a triple zero of $G_{\alpha,\beta,\|Y_{\trunc}\|_{\vp}}$. In fact, in order to remain locally negative around $\theta=0$, we would necessarily have a zero of order four, i.e. $G'''_{\alpha,\beta,\|Y_{\trunc}\|_{\vp}}(0)=0$. Hence $\theta=0$ would be a zero of order four of $g_{\alpha,\beta,\|Y_{\trunc}\|_{\vp}}$. This would imply $\|Y_{\trunc}\|_{\vp}=s_{\text{inf}}(\alpha,\beta)$ (triple zero using the definition of $s_{\text{inf}}(\alpha,\beta)$ and $g'''_{\alpha,\beta,s_{\text{inf}}(\alpha,\beta)}(0)$ (zero of order four). However we have:
\begin{equation*}
    g'''_{\alpha,\alpha+\epsilon,s_{\text{inf}}(\alpha,\beta)}(0)=\frac{2e^{3\epsilon}-(\epsilon^3+6)e^{2\epsilon} +(6-\epsilon^3)e^{\epsilon}-2}{(e^{\epsilon}-1)^3}=\frac{P(\epsilon)}{(e^{\epsilon}-1)^3}.
\end{equation*}

From Lemma \ref{Lemmaepsilon} in Appendix \ref{sec:app:expo} we have that  $g'''_{\alpha,\alpha+\epsilon,s_{\text{inf}}(\alpha,\alpha+\epsilon)}(0)>0$ for all $\epsilon>0$ so that $g'''_{\alpha,\beta,s_{\text{inf}}(\alpha,\beta)}(0)>0$ for any $\alpha<\beta$ i.e., we cannot have $\theta_c(\alpha,\beta,\|Y_{\trunc}\|_{\vp})=0$. We conclude that the optimal variance proxy is characterized by the unique solution of the below system of equations, which is coherent with the illustration of Figure~\ref{fig:Expo}. This concludes the proof of Theorem \ref{thm:main_trunc_expo_standard}.
\end{proof}

\begin{figure}
    \centering
    \includegraphics[width=.6\textwidth]{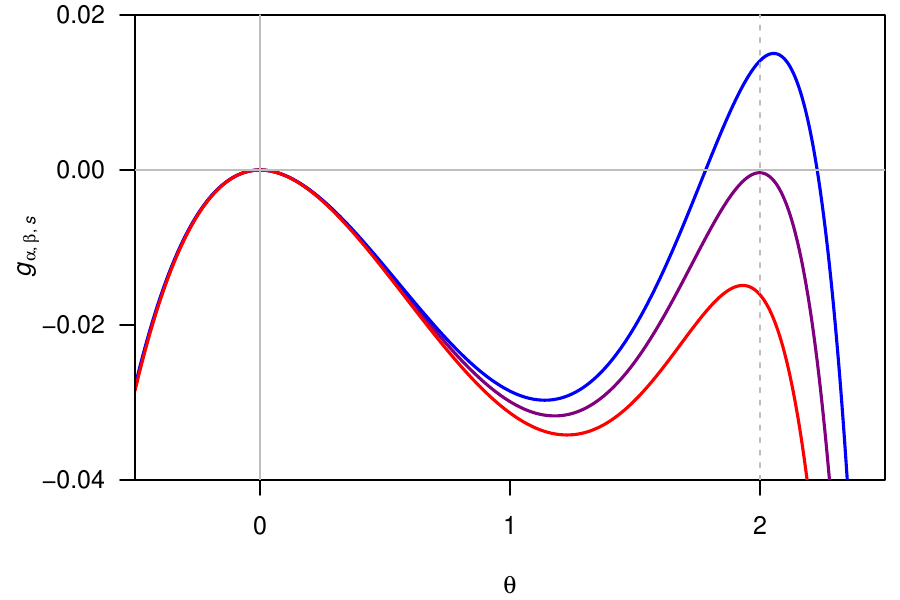} 
    \caption{\textbf{Illustration of the proof of Theorem~\ref{thm:main_trunc_expo_standard}.}  
    Function $g_{\alpha,\beta,s}$ defined in Equation~\eqref{eq:g-definition} is represented with $\alpha=1$ and $\beta=4$ and for different values of $s$. 
    Non-positivity of the  $g_{\alpha,\beta,s}$ function is equivalent to $s^2$ being a variance proxy. 
    In purple is the optimal variance proxy case $s^2= s_c^2 = \Vert Y_\trunc\Vert_\vp^2$ ($s= s_c\approx0.8107$). Red represents $s=0.812>s_c$ (with $s^2$ being a variance proxy) and blue represents $s=0.8095<s_c$  (with $s^2$ not being a variance proxy).}
    \label{fig:Expo}
\end{figure}

\section{Future research directions}\label{sec:directions}

In summary, our work has identified the optimal sub-Gaussian variance proxy for truncated Gaussian and truncated exponential random variables, while also delineating the conditions under which strict sub-Gaussianity may or may not be observed in these truncated distributions.

Moving forward, there are several avenues for extending this work. Firstly, exploring additional commonly encountered distributions beyond Gaussian and exponential would broaden the scope of our findings. Additionally, investigating the truncation of multivariate distributions, such as the multivariate Gaussian distribution, could provide valuable insights into more complex scenarios.

Furthermore, considering the sub-Weibull property of truncated random variables represents a promising direction for future research  \citep{vladimirova2020subweibull}. This concept, which serves as a generalization of the sub-Gaussian property, offers an intriguing framework for further understanding the behavior of truncated distributions across various contexts.

\clearpage

\section*{Acknowledgments}
\support
\newpage
\appendix

\renewcommand{\theequation}{\thesection.\arabic{equation}}

\section{Proofs for truncated Gaussian random variables}\label{sec:app:gauss}

This entire Appendix~\ref{sec:app:gauss} is devoted to proving Lemma~\ref{lem:h_strict_concave}, showing that the function $h_{\alpha,\beta}$ is strictly concave on $[\theta_0,+\infty)$ for any finite $\alpha<\beta$. 

\begin{proof}[Proof of Lemma \ref{lem:h_strict_concave}]
To begin with, observe that it is sufficient to check the concavity of the function only for the case when $\alpha = -\beta$ (i.e. $\theta_0:=\frac{\alpha+\beta}{2}=0$). Indeed, suppose that $h''_{-\beta,\beta}(\theta)<0$ for all $\beta \in \mathbb{R}_{>0}$ and $\theta > 0$. Then, this implies that $h_{-\frac{\beta-\alpha}{2},\frac{\beta-\alpha}{2}}(\theta) = h_{\alpha,\beta}(\theta + \theta_0)$ is strictly concave for $\theta \geq 0$, i.e. that $x\mapsto h_{\alpha,\beta}(x)$ is strictly concave for $x \geq \theta_0$. 
\subsection{Some notations and preliminary results}
Let us recall that we have:
\begin{equation}
    \label{RefId}  
\begin{split}
&\Phi(x)\coloneqq \int_{-\infty}^x \frac{1}{\sqrt{2\pi}}e^{-\frac{1}{2}s^2}ds,\\
&F_{-\beta,\beta}(\theta)\coloneqq \Phi(\beta-\theta)-\Phi(-\beta-\theta),\\
&f_{-\beta,\beta}(\theta)\coloneqq \ln\left(\frac{F_{-\beta,\beta}(\theta)}{2\Phi(\beta)-1}\right),\\
&F'_{-\beta,\beta}(\theta)=\phi(\theta+\beta)-\phi(\theta-\beta)=\frac{1}{\sqrt{2\pi}}\left(e^{-\frac{1}{2}(\theta+\beta)^2}-e^{-\frac{1}{2}(\theta-\beta)^2}\right),\\
&F''_{-\beta,\beta}(\theta)=\frac{1}{\sqrt{2\pi}}\left(-(\beta+\theta)e^{-\frac{1}{2}(\theta+\beta)^2}+(\theta-\beta)e^{-\frac{1}{2}(\theta-\beta)^2}\right),\\
&F'''_{-\beta,\beta}(\theta)=\frac{1}{\sqrt{2\pi}}\left(e^{-\frac{1}{2}(\theta+\beta)^2}\left[(\beta+\theta)^2-1\right] + e^{-\frac{1}{2}(\theta-\beta)^2}\left[1-(\beta-\theta)^2\right]\right),\\
&h_{-\beta,\beta}(\theta)\coloneqq f'_{-\beta,\beta}(\theta)=\frac{F'_{-\beta,\beta}(\theta)}{F_{-\beta,\beta}(\theta)},\\
&h'_{-\beta,\beta}(\theta)=\frac{F_{-\beta,\beta}(\theta)F''_{-\beta,\beta}(\theta) -(F'_{-\beta,\beta}(\theta))^2 }{F_{-\beta,\beta}(\theta)^2},\\
&h''_{-\beta,\beta}(\theta)=\frac{(F_{-\beta,\beta}(\theta))^2 F'''_{-\beta,\beta}(\theta)-3F_{-\beta,\beta}(\theta)F'_{-\beta,\beta}(\theta)F''_{-\beta,\beta}(\theta)+2(F'_{-\beta,\beta}(\theta))^3 }{F_{-\beta,\beta}(\theta)^3},\\
&\underset{\theta\to +\infty}{\lim} F'_{-\beta,\beta}(\theta)=\underset{\theta\to +\infty}{\lim} F''_{-\beta,\beta}(\theta)=\underset{\theta\to +\infty}{\lim} F'''_{-\beta,\beta}(\theta)=0,\\
&\underset{{\tiny{\theta\to +\infty}}}{\lim} F_{-\beta,\beta}(\theta)=0.
\end{split}
\end{equation}
It is obvious by definition that $F_{-\beta,\beta}$ is strictly positive on $\mathbb{R}$. It is also straightforward to obtain the important linear relation
\beq \label{Fthird} F'''_{-\beta,\beta}(\theta)=-P_2'(\theta) F''_{-\beta,\beta}(\theta)-P_2(\theta)F'_{-\beta,\beta}(\theta), \eeq
where
\beq P_2(\theta)\coloneqq \theta^2+1-\beta^2. \eeq
In order to prove the strict concavity of $h_{-\beta,\beta}$ on $\mathbb{R}_{\geq 0}$, we shall need information on the function $F_{-\beta,\beta}$ and its derivatives. Thus, we shall first prove the following lemma.

\begin{lemma}\label{LemmaFthird}We have the following results:
\begin{itemize}
\item We have $F'_{-\beta,\beta}(\theta)<0$ for any $\beta>0$ and any $\theta\in \mathbb{R}_{>0}$.
\item For $0<\beta\leq\sqrt{3}$: The function $F'''_{-\beta,\beta}$ has only one zero on $\mathbb{R}_{>0}$ that we shall denote $\theta_1$. Moreover, it is strictly positive on $(0,\theta_1)$ and strictly negative on $(\theta_1,+\infty)$. Consequently, the function $F''_{-\beta,\beta}$ is strictly increasing on $[0,\theta_1]$ from $F''_{-\beta,\beta}(0)=-\frac{2\beta}{\sqrt{2\pi}}e^{-\frac{1}{2}\beta^2}<0$ to $F''_{-\beta,\beta}(\theta_1)>0$ and strictly decreasing on $(\theta_1,+\infty)$ from $F''_{-\beta,\beta}(\theta_1)>0$ to $0$. In particular $F''_{-\beta,\beta}$ has only one zero on $\mathbb{R}_{>0}$ denoted $\alpha_1$ satisfying $\alpha_1>\theta_1$.
\item  For $\beta>\sqrt{3}$: The function $F'''_{-\beta,\beta}$ has two distinct zeros on $\mathbb{R}_{>0}$ denoted $\theta_1<\theta_2$ on $\mathbb{R}_{>0}$. Moreover, it is strictly negative on $(0,\theta_1)\cup(\theta_2,+\infty)$ and strictly positive on $(\theta_1,\theta_2)$. Consequently, the function $F''_{-\beta,\beta}$ is decreasing on $(0,\theta_1)$ from $F''_{-\beta,\beta}(0)=-\frac{2\beta}{\sqrt{2\pi}}e^{-\frac{1}{2}\beta^2}<0$ to $F''_{-\beta,\beta}(\theta_1)<F''_{-\beta,\beta}(0)<0$. $F''_{-\beta,\beta}$ is then increasing on $(\theta_1,\theta_2)$ up to $F''_{-\beta,\beta}(\theta_2)$. It finally decreases from $F''_{-\beta,\beta}(\theta_2)$ to its limit $0$ at $\theta\to +\infty$. In particular, we must have $F''_{-\beta,\beta}(\theta_2)>0$ and $F''_{-\beta,\beta}$ has only one zero on $\mathbb{R}_{>0}$ denoted $\alpha_1$ that verifies $\alpha_2\in (\theta_1,\theta_2)$.
\end{itemize} 
\end{lemma}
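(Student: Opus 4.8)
The plan is to reduce the sign analysis of $F''_{-\beta,\beta}$ and $F'''_{-\beta,\beta}$ on $\mathbb{R}_{>0}$ to that of elementary hyperbolic expressions, using the explicit formulas collected in \eqref{RefId}. The first claim is immediate: since $F'_{-\beta,\beta}(\theta)=\frac{1}{\sqrt{2\pi}}\big(e^{-\frac12(\theta+\beta)^2}-e^{-\frac12(\theta-\beta)^2}\big)$, its sign is that of $(\theta-\beta)^2-(\theta+\beta)^2=-4\theta\beta<0$ for $\theta,\beta>0$, so $F'_{-\beta,\beta}<0$ there. For the other two claims I would factor out the common positive term $e^{-\frac12(\theta^2+\beta^2)}$. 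A short computation then shows that on $\mathbb{R}_{>0}$ the sign of $F''_{-\beta,\beta}(\theta)$ equals that of $M(\theta):=\theta\sinh(\theta\beta)-\beta\cosh(\theta\beta)$, and, after also dividing by $\sinh(\theta\beta)>0$ and setting $u:=\theta\beta$, the sign of $F'''_{-\beta,\beta}(\theta)$ equals that of $W(u):=2u\coth u+1-\beta^2-u^2/\beta^2$. The two endpoint evaluations $W(0^+)=3-\beta^2$ and $W(u)\to-\infty$ already explain the threshold $\beta=\sqrt3$: the sign of $F'''_{-\beta,\beta}$ just to the right of $0$ is that of $3-\beta^2$, while $F'''_{-\beta,\beta}<0$ near $+\infty$.

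I would first settle the shape of $F''_{-\beta,\beta}$ directly, as it is needed anyway. Differentiating $M$ and dividing by $\cosh(\theta\beta)$ reduces the sign of $M'$ to that of $N(\theta):=(1-\beta^2)\tanh(\theta\beta)+\theta\beta$, whose derivative $N'(\theta)=\beta\big(1+(1-\beta^2)/\cosh^2(\theta\beta)\big)$ clearly vanishes at most once. A routine case split on $\beta\le1$, $1<\beta^2\le2$, and $\beta^2>2$ shows $N$, hence $M'$, changes sign at most once, so $M$ is either increasing or has a single minimum. Combined with $M(0)=-\beta<0$ and $M(+\infty)=+\infty$, this yields that $M$, and therefore $F''_{-\beta,\beta}$, has exactly one zero $\alpha_1>0$, being negative on $(0,\alpha_1)$ and positive beyond it, with $F''_{-\beta,\beta}(+\infty)=0^+$.

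The crux, and what I expect to be the main obstacle, is controlling the number of critical points of $W$, i.e. of zeros of $F'''_{-\beta,\beta}$. Writing $W'(u)=2u\big(r(u)-1/\beta^2\big)$ with $r(u):=q(u)/u$ and $q(u):=\coth u-u/\sinh^2 u$, everything rests on the monotonicity statement that $r$ is strictly decreasing on $(0,\infty)$, from $r(0^+)=2/3$ down to $0$. The cleanest route I see is to rewrite $r(u)=\big(\sinh(2u)-2u\big)\big/\big(u(\cosh(2u)-1)\big)$ and expand numerator and denominator as power series in $u^2$ with positive coefficients whose successive quotients equal $1/(2j+3)$, which is strictly decreasing in $j$; the Biernacki--Krzy\.z monotonicity lemma for ratios of power series then gives that $r$ is strictly decreasing. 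Granting this, $W'$ vanishes at most once, so $W$ is strictly decreasing when $\beta^2\le3/2$ and single-peaked (one interior maximum) when $\beta^2>3/2$; in particular $F'''_{-\beta,\beta}$ has at most two zeros on $\mathbb{R}_{>0}$.

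It then remains to assemble the cases from the endpoint signs of $W$ and the at-most-one-extremum property. For $0<\beta\le\sqrt3$ one has $W(0^+)=3-\beta^2\ge0$ and $W(+\infty)=-\infty$, forcing exactly one sign change; this produces a unique $\theta_1$ with $F'''_{-\beta,\beta}>0$ on $(0,\theta_1)$ and $<0$ afterwards, so $F''_{-\beta,\beta}$ increases then decreases, giving $F''_{-\beta,\beta}(\theta_1)>0$ and $\alpha_1>\theta_1$. For $\beta>\sqrt3$ both endpoint signs of $W$ are negative, so it has either zero or two zeros; the zero-zero possibility would make $F'''_{-\beta,\beta}<0$ throughout, hence $F''_{-\beta,\beta}$ strictly decreasing and everywhere negative, contradicting the already established existence of $\alpha_1$. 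Thus $F'''_{-\beta,\beta}$ has exactly two zeros $\theta_1<\theta_2$, and integrating the sign pattern shows $F''_{-\beta,\beta}$ decreases on $(0,\theta_1)$ (so $F''_{-\beta,\beta}(\theta_1)<F''_{-\beta,\beta}(0)<0$), increases on $(\theta_1,\theta_2)$ up to $F''_{-\beta,\beta}(\theta_2)>0$, then decreases to $0$, placing the unique zero $\alpha_1$ in $(\theta_1,\theta_2)$. This reproduces all three claims of the lemma.
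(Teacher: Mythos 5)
Your proof is correct in substance and follows a genuinely different route from the paper's. The paper substitutes $\theta=s/\beta$ and reduces the sign of $F'''_{-\beta,\beta}$ to that of $g_\beta(s)=\left(1-\left(\frac{s}{\beta}-\beta\right)^2\right)e^{2s}-1+\left(\frac{s}{\beta}+\beta\right)^2$, then runs a cascade through $g'''_\beta$, $g''_\beta$, $g'_\beta$, $g_\beta$ over four separate ranges of $\beta$ (determined by the roots of the quadratic appearing in $g'''_\beta$), and it needs a preliminary contradiction argument (from the limits of $F'_{-\beta,\beta}$ and $F''_{-\beta,\beta}$ at $+\infty$) showing that $F'''_{-\beta,\beta}$ must vanish at least once, which is invoked repeatedly to pin down otherwise inaccessible signs. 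You instead divide by $\sinh(\beta\theta)>0$ to get the cleaner comparison function $W(u)=2u\coth u+1-\beta^2-u^2/\beta^2$, and your single analytic input is that $r(u)=\left(\sinh(2u)-2u\right)/\left(u(\cosh(2u)-1)\right)$ decreases strictly from $2/3$ to $0$ (via the Biernacki--Krzy\.z lemma), so that $W$ has at most one critical point; moreover you establish the one-zero property of $F''_{-\beta,\beta}$ directly (via $M$ and $N$), and this plays the role of the paper's contradiction step when you exclude the no-sign-change possibility for $\beta>\sqrt{3}$. Your version is shorter, has essentially two cases instead of four, and isolates the difficulty in one standard monotone-ratio-of-power-series lemma; the paper's version is longer but entirely self-contained elementary calculus.

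Three small points. First, for $\beta>\sqrt{3}$ your dichotomy ``zero or two zeros'' omits the tangential case where the peak value of $W$ is exactly zero (a double zero, no sign change); your contradiction disposes of it identically, since then $F'''_{-\beta,\beta}\le 0$ still forces $F''_{-\beta,\beta}$ to be strictly decreasing and hence zero-free, but it should be mentioned. Second, the coefficient ratio in the series argument is $2/(2j+3)$ (indexing from $j=0$), not $1/(2j+3)$; this is immaterial since only strict decrease in $j$ matters. Third, your concluding inequality $\alpha_1>\theta_1$ in the case $\beta\le\sqrt{3}$ contradicts your own (correct) analysis: since $F''_{-\beta,\beta}$ increases from $F''_{-\beta,\beta}(0)<0$ to $F''_{-\beta,\beta}(\theta_1)>0$ and then decreases to $0$, staying positive on $(\theta_1,+\infty)$, its unique zero satisfies $\alpha_1\in(0,\theta_1)$. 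Note that the lemma as printed contains the same internal inconsistency (the claim $\alpha_1>\theta_1$ is incompatible with the monotonicity description in the same sentence), so this is a typo you inherited rather than introduced; the correct conclusion is $\alpha_1<\theta_1$.
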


\begin{proof}[Proof of Lemma \ref{LemmaFthird}]
The first point is obvious from \eqref{RefId}. Let us observe that 
\beq F''_{-\beta,\beta}(\theta)= \frac{2}{\sqrt{2\pi}}\exp\left(-\frac{1}{2}(\beta^2+\theta^2)\right)\left(\theta \sinh(\beta\theta)-\beta \cosh(\beta\theta)\right).\eeq
In particular $F''_{-\beta,\beta}(0)=-\frac{2\beta}{\sqrt{2\pi}}\exp\left(-\frac{1}{2}\beta^2\right)<0$ and $\underset{\theta\to +\infty}{\lim}F''_{-\beta,\beta}(\theta)=0$. Let us assume by contradiction that $F'''_\beta$ does not vanish on $\mathbb{R}_{>0}$. This implies that $F_{-\beta,\beta}''$ would be a strictly monotonous function and therefore would be a strictly increasing function because of the values of $F''_{-\beta,\beta}(0)$ and $F''_{-\beta,\beta}(+\infty)$. Since $\underset{\theta\to +\infty}{\lim}F''_{-\beta,\beta}(\theta)=0$, it implies that $F''_{-\beta,\beta}$ would be a strictly negative function on $\mathbb{R}_{>0}$ so that $F'_{-\beta,\beta}$ would be a strictly decreasing function on $\mathbb{R}_{>0}$. This is a contradiction with the fact that $\underset{\theta\to +\infty}{\lim}F'_{-\beta,\beta}(\theta)=0$ and $F'_{-\beta,\beta}$ is strictly negative on $\mathbb{R}_{>0}$.  Therefore, we conclude that there is at least one value of $\theta_c\in \mathbb{R}_{>0}$ for which $F'''_{-\beta,\beta}(\theta_c)=0$. This point shall be used later to determine the sign of some quantities by contradiction\footnote{In fact, the reasoning implies that there exists a non-trivial interval on which $F'''_{-\beta,\beta}$ is strictly positive.}. Let us now compute

\beq F'''_{-\beta,\beta}\left(\theta=\frac{s}{\beta}\right)=\frac{1}{\sqrt{2\pi}}\exp\left(-\frac{1}{2}\left(\beta^2+\frac{s^2}{\beta^2}\right)\right)e^{-s}\left(\left(1-\left(\frac{s}{\beta}-\beta\right)^2\right)e^{2s}  -\left(1-\left(\frac{s}{\beta}+\beta\right)^2\right) \right).\eeq

Therefore, the sign and zeros of $F'''_{-\beta,\beta}$ are equivalent to the sign and zeros of $g_\beta(s)$ defined by
\beq g_\beta(s)\coloneqq \left(1-\left(\frac{s}{\beta}-\beta\right)^2\right)e^{2s}  -1+\left(\frac{s}{\beta}+\beta\right)^2.\eeq

We have  
\begin{align*}
    g_\beta'(s)&=-\frac{2(\beta^4-2(s+1) \beta^2+s^2+s)}{\beta^2}e^{2s}+\frac{2(\beta^2+s)}{\beta^2},\\
g''_\beta(s)&=-\frac{2(2\beta^4-4\beta^2s-6\beta^2+2s^2+4s+1)}{\beta^2}e^{2s}+\frac{2}{\beta^2} ,\\
g'''_\beta(s)&=-\frac{4(2s^2-(4\beta^2-6)s+2\beta^4-8\beta^2+3)}{\beta^2}e^{2s}.
\end{align*}

\sloppy{The discriminant of the numerator of $g'''_\beta$ is given by $\Delta=4(4\beta^2+3)>0$. Hence $g'''_\beta$ has two distinct zeros on $\mathbb{R}$ and we need to study the sign of these zeros. Let us denote
\beq P(\beta)\coloneqq 2\beta^4-8\beta^2+3 \,\, ,\,\, S(\beta)\coloneqq 4\beta^2-6,\eeq
so that the product (resp. the sum) of the two zeros of $g'''_\beta$ are given by $\frac{1}{2}P(\beta)$ (resp. $\frac{1}{2}S(\beta)$). $P(\beta)$ is strictly positive on $\left(0,\frac{\sqrt{8-2\sqrt{10}}}{2}\right)\cup\left(\frac{\sqrt{8+2\sqrt{10}}}{2},+\infty\right)$ and strictly negative on $\left(\frac{\sqrt{8-2\sqrt{10}}}{2},\frac{\sqrt{8+2\sqrt{10}}}{2} \right)$.  $S(\beta)$ is strictly positive on $\left(\frac{\sqrt{6}}{2},+\infty\right)$ and strictly negative on $\left(0,\frac{\sqrt{6}}{2}\right)$. We shall also observe:}
\begin{align*}
&\lim_{s\to +\infty}g_\beta(s)=\lim_{s\to +\infty}g'_\beta(s)=\lim_{s\to +\infty}g''_\beta(s)=\lim_{s\to +\infty}g'''_\beta(s)=-\infty,\\
&g'''_\beta(0)=-\frac{4P(\beta)}{\beta^2},\\
&g''_\beta(0)=4(3-\beta^2),\\
&g'_\beta(0)=2(3-\beta^2) ,\\
&g_\beta(0)=0.
\end{align*}
Let us now study the following four cases.\smallskip

\noindent\textbf{Case $\beta>\frac{\sqrt{8+2\sqrt{10}}}{2}$.} Here, $g'''_\beta$ has two roots on $\mathbb{R}$ and their product in strictly positive and their sum is strictly positive. Thus, $g'''_\beta$ has two distinct roots on $\mathbb{R}_{>0}$ denoted $0<s_1<s_2$. It is strictly negative on $(0,s_1)\cup(s_2,+\infty)$ and strictly positive in $(s_1,s_2)$. Therefore $g''_\beta$ strictly decreases on $(0,s_1)$ and since $g''_\beta(0)<0$ it remains strictly negative on $(0,s_1)$. $g''_\beta$ is then strictly increasing from $g''_\beta(s_1)$ up to $g''_\beta(s_2)$. We have $g''_\beta(s_2)=\frac{2}{\beta^2}\left(e^{2\beta^2+\sqrt{4\beta^2+3}-3}(\sqrt{4\beta^2+3}-1) +1\right)>0$. Since $g''_\beta$ diverges towards $-\infty$ when $s\to +\infty$, we end up with fact that there exist exactly two values $(s_3,s_4)\in (\mathbb{R}_{>0})^2$ such that $g''_\beta(s_3)=g''_\beta(s_4)=0$ and they satisfy $s_3<s_2<s_4$. Moreover $g''_\beta$ is strictly positive on $\left(s_3,s_4\right)$ and strictly negative on $\left(0,s_3\right)\cup\left(s_4,+\infty\right)$. Therefore $g'_\beta$ strictly decreases on $\left(0,s_3\right)$ and since $g'_\beta(0)<0$ it remains strictly negative on $(0,s_3)$. Note that we necessarily have $g'_\beta(s_4)>0$. Indeed, if $g'_{\beta}(s_4)\leq 0$ then $g'_\beta$ would be strictly negative on $\mathbb{R}_{>0}$ so that $g_\beta$ would be strictly decreasing and since $g_\beta(0)=0$ it would not vanish on $\mathbb{R}_{>0}$ which is a contradiction (because we have proved that $F'''_{-\beta,\beta}$ must vanish at least once). Since $g'_\beta(s_4)>0$, we obtain that there exists exactly two values $(s_5,s_6)\in (\mathbb{R}_{>0})^2$ such that $g'_{\beta}(s_5)=g'_\beta(s_6)=0$ and they satisfy $s_3<s_5<s_4<s_6$. $g_\beta$ is thus strictly decreasing on $(0,s_5)$ and since $g_{\beta}(0)=0$ it remains strictly negative. Again $g_\beta(s_6)$ must be strictly positive otherwise $g_{\beta}$ would not vanish and nor would $F'''_{-\beta,\beta}$ which is a contradiction. Therefore, the variations of $g_\beta$ implies that there exists exactly two values $(s_7,s_8)\in (\mathbb{R}_{>0})^2$ such that $g_\beta(s_7)=g_\beta(s_8)=0$ and they satisfy $s_5<s_7<s_6<s_8$.
\smallskip

\noindent\textbf{Case $\beta\in \Big(\sqrt{3},\frac{\sqrt{8+2\sqrt{10}}}{2}\,\Big]$.} Here, $g'''_\beta$ has two roots on $\mathbb{R}$ and their product in negative or null and their sum is strictly positive. This means that $g'''_\beta$ has exactly one root on $\mathbb{R}_{>0}$ that we shall denote $s_2>0$. $g'''_\beta$ is strictly positive on $(0,s_2)$ and strictly negative on $(s_2,+\infty)$. Since $g''_\beta(0)<0$ and $g''_\beta(s_2)=\frac{2}{\beta^2}\left(e^{2\beta^2+\sqrt{4\beta^2+3}-3}(\sqrt{4\beta^2+3}-1) +1\right)>0$ and $\underset{s\to +\infty}{\lim} g''_\beta(s)=-\infty$, we get that there exists exactly two distinct values $(s_3,s_4)\in (\mathbb{R}_{>0})^2$ such that $g''_\beta(s_3)=g''_\beta(s_4)=0$ and they satisfy $s_3<s_2<s_4$. Moreover, $g'_\beta$ is strictly decreasing on $(0,s_3)$, strictly increasing on $(s_3,s_4)$ and strictly decreasing on $(s_4,+\infty)$. We have $g'_\beta(0)<0$ so that $g'_\beta(s_3)<0$. Similarly to the previous case, we must have $g'_\beta(s_4)>0$ otherwise $g_\beta$ would not vanish on $\mathbb{R}_{>0}$ (it would be strictly decreasing on $\mathbb{R}_{>0}$ with $g_\beta(0)=0$). We obtain that there exists exactly two values $(s_5,s_6)\in (\mathbb{R}_{>0})^2$ such that $g'_\beta(s_5)=g'_\beta(s_6)=0$ and they satisfy $s_3<s_5<s_4<s_6$. $g_\beta$ is then strictly decreasing from $g_\beta(0)=0$ to $g_\beta(s_5)<0$. We must have $g_\beta(s_6)>0$ otherwise $g_\beta$ would not vanish on $\mathbb{R}_{>0}$ (which is a contradiction since we know that $F'''_{-\beta,\beta}$ has at least one zero on $\mathbb{R}_{>0}$). Therefore, the variations of $g_\beta$ implies that there exists exactly two values $(s_7,s_8)\in (\mathbb{R}_{>0})^2$ such that $g_\beta(s_7)=g_\beta(s_8)=0$ and they satisfy $s_5<s_7<s_6<s_8$.
\smallskip

\noindent\textbf{Case $\beta\in \Big(\frac{\sqrt{8-2\sqrt{10}}}{2},\sqrt{3}\Big]$.} Here, $g'''_\beta$ has two roots on $\mathbb{R}$ and their product is strictly negative. This means that $g'''_\beta$ has exactly one root on $\mathbb{R}_{>0}$ that we shall denote $s_2>0$. $g'''_\beta$ is strictly positive on $(0,s_2)$ and strictly negative on $(s_2,+\infty)$. We have $g''_\beta(0)\geq 0$ and $\underset{s\to +\infty}{\lim} g''_\beta(s)=-\infty$. Thus, since $g_\beta''$ is strictly increasing and then strictly decreasing, we get that there exists exactly one value $s_4\in \mathbb{R}_{>0}$ such that $g''_\beta(s_4)=0$ and it satisfies $s_2<s_4$.  Moreover, $g'_\beta$ is strictly increasing on $(0,s_4)$ with $g_\beta'(0)\geq 0$. It then strictly decreases from $g'_\beta(s_4)>0$ towards $-\infty$. Hence, there exists exactly one value $s_6>s_4$ such that $g'_\beta(s_6)=0$ and $g'_\beta$ is strictly positive on $(0,s_6)$ and strictly negative on $(s_6,+\infty)$. Thus, $g_\beta$ is strictly increasing from $g_\beta(0)=0$ to $g_\beta(s_6)>0$ and then strictly decreasing towards $-\infty$. We conclude that there exists only one value $s_8$ on $\mathbb{R}_{>0}$ such that $g_\beta(s_8)=0$. 
\smallskip

\noindent\textbf{Case $\beta\in \Big(0,\frac{\sqrt{8-2\sqrt{10}}}{2}\,\Big]$.} Here, $g'''_\beta$ has two roots on $\mathbb{R}$, and their product is positive or null, while their sum is strictly negative. This implies that $g'''_\beta$ does not vanish on $\mathbb{R}_{>0}$ and thus is strictly negative. $g''_\beta$ is thus strictly decreasing on $\mathbb{R}_{>0}$ with $g''_\beta(0)>0$ and $g_\beta''(+\infty)=-\infty$. We get that there exists exactly one value $s_4\in \mathbb{R}_{>0}$ such that $g''_\beta(s_4)=0$. Moreover, $g'_\beta$ is strictly increasing on $(0,s_4)$ with $g_\beta'(0)>0$. It then strictly decreases from $g'_\beta(s_4)>0$ towards $-\infty$.  Hence, there exists exactly one value $s_6>s_4$ such that $g'_\beta(s_6)=0$ and $g'_\beta$ is strictly positive on $(0,s_6)$ and strictly negative on $(s_6,+\infty)$. Thus, $g_\beta$ is strictly increasing from $g_\beta(0)=0$ to $g_\beta(s_6)>0$ and then strictly decreasing towards $-\infty$. We conclude that there exists only one value $s_8$ on $\mathbb{R}_{>0}$ such that $g_\beta(s_8)=0$. 
\smallskip

Summarizing the four different cases and using $\theta=\frac{s}{\beta}$, we conclude that for $\beta\in\Big(0,\sqrt{3}\Big]$, $F'''_{-\beta,\beta}$ has exactly one root $\theta_1$ on $\mathbb{R}_{>0}$ and it is strictly positive on $\left(0,\theta_1\right)$ and it is strictly negative on $\left(\theta_1,+\infty\right)$. On the contrary, for $\beta>\sqrt{3}$, $F'''_{-\beta,\beta}$ has exactly two distinct roots on $\mathbb{R}_{>0}$ denoted $\theta_1<\theta_2$ and it is strictly positive on $\left(\theta_1,\theta_2\right)$ and strictly negative on $\left(0,\theta_1\right)\cup\left(\theta_2,+\infty\right)$. The rest of the lemma is then obvious, which concludes the proof of Lemma \ref{LemmaFthird}.
\end{proof}

\subsection{A sufficient condition for strict concavity}
Let us first prove that for any $\beta>0$, the function $h_{-\beta,\beta}$ is strictly concave in a positive neighborhood of $\theta=0$. It is a straightforward computation by taking Taylor series around $\theta=0$ to observe that
\beq h''_{-\beta,\beta}(\theta)\overset{\theta\to 0}{=}-\frac{\beta\exp\left(-\frac{\beta^2}{2}\right)}{(2\Phi(\beta)-1)^2}\left(\frac{6\beta}{\pi}\exp\left(-\frac{\beta^2}{2}\right)+\frac{2(\beta^2-3)}{\sqrt{2\pi}}(2\Phi(\beta)-1) \right)\theta +O(\theta^2).
\eeq
The leading order is strictly negative for $\beta\geq \sqrt{3}$ because both terms are negative or null. For $\beta\in [0,\sqrt{3})$ we define
\beq f_1(\beta)\coloneqq \frac{6\beta}{\pi(\beta^2-3)}\exp\left(-\frac{\beta^2}{2}\right)+\frac{2}{\sqrt{2\pi}}(2\Phi(\beta)-1),\eeq
whose derivative is $f_1'(\beta)=-\frac{4\beta^4 \exp\left(-\frac{\beta^2}{2}\right)}{\pi(\beta^2-3)^2}<0$. Therefore $f_1$ is strictly decreasing on $[0,\sqrt{3})$ and since $f_1(0)=0$ we get that $f_1$ is strictly negative on $[0,\sqrt{3})$. This implies that the leading order of $h''_{-\beta,\beta}(\theta)$ as $\theta\to 0$ is strictly negative on $\mathbb{R}_{>0}$. In particular, we get that for any $\beta>0$, there exists a positive neighborhood of $\theta=0$ on which $h_{-\beta,\beta}$ is strictly concave.

\medskip

Let us now reformulate the problem of strict concavity more simply. From \eqref{RefId}, the equation $h_{-\beta,\beta}''(\theta)=0$ is equivalent to
\beq\label{ZeroConcavity}Z_{\beta}(\theta)\coloneqq (F_{-\beta,\beta}(\theta))^2 F'''_{-\beta,\beta}(\theta)-3F_{-\beta,\beta}(\theta)F'_{-\beta,\beta}(\theta)F''_{-\beta,\beta}(\theta)+2(F'_{-\beta,\beta}(\theta))^3=0.\eeq
We shall denote 
\begin{align*}
     a_\beta(\theta)&\coloneqq F'''_{-\beta,\beta}(\theta),\\
b_{\beta}(\theta)&\coloneqq -3F'_{-\beta,\beta}(\theta)F''_{-\beta,\beta}(\theta),\\
c_{\beta}(\theta)&\coloneqq 2(F'_{-\beta,\beta}(\theta))^3,
\end{align*}
so that
\beq \label{ZeroConcavity2}Z_{\beta}(\theta)= a_\beta(\theta)(F_{-\beta,\beta}(\theta))^2+b_\beta(\theta)F_{-\beta,\beta}(\theta)+c_\beta(\theta).\eeq
Let us observe that if we can prove that $Z_{\beta}(\theta)$ does not vanish on $\mathbb{R}_{>0}$, then it proves that $h_{-\beta,\beta}$ is strictly concave on $\mathbb{R}_{>0}$. Indeed, the function $Z_\beta(\theta)$ is continuous in $\theta$. Moreover, we have proved that it is strictly negative in a positive neighborhood of $\theta=0$ so that if it does not vanish on $\mathbb{R}_{>0}$, then the intermediate value theorem implies that it must remain strictly negative on $\mathbb{R}_{>0}$. Therefore, we obtain the following sufficient condition to prove strict concavity of $h_{-\beta,\beta}$ on $\mathbb{R}_{>0}$.

\begin{proposition}\label{SuffCond1}Let $\beta>0$. Proving that $Z_\beta$ does not vanish on $\mathbb{R}_{>0}$ is a sufficient condition to proving the strict concavity of $h_{-\beta,\beta}$ on $\mathbb{R}_{>0}$.
\end{proposition}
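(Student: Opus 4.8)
The plan is to turn the strict-concavity statement into a single sign condition on $Z_\beta$ and then close the argument by a connectedness/intermediate-value argument. From the formula for $h''_{-\beta,\beta}$ in \eqref{RefId} we have the exact identity
$$h''_{-\beta,\beta}(\theta)=\frac{Z_\beta(\theta)}{\left(F_{-\beta,\beta}(\theta)\right)^3},$$
where $Z_\beta$ is the numerator introduced in \eqref{ZeroConcavity}. Since $F_{-\beta,\beta}$ is strictly positive on $\mathbb{R}$, the denominator is strictly positive for every $\theta$, so $h''_{-\beta,\beta}(\theta)$ and $Z_\beta(\theta)$ carry the same sign and vanish at exactly the same points. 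Thus proving strict concavity of $h_{-\beta,\beta}$ on $\mathbb{R}_{>0}$ is equivalent to proving $Z_\beta<0$ on $\mathbb{R}_{>0}$.

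First I would record that $Z_\beta$ is continuous (indeed smooth) on $\mathbb{R}_{>0}$, being the polynomial expression in the smooth functions $F_{-\beta,\beta},F'_{-\beta,\beta},F''_{-\beta,\beta},F'''_{-\beta,\beta}$ displayed in \eqref{ZeroConcavity2}. Next I would invoke the local analysis already carried out above: the Taylor expansion of $h''_{-\beta,\beta}$ at $\theta=0$, together with the strict monotonicity and sign of $f_1$, shows that $h''_{-\beta,\beta}$, and hence $Z_\beta$, is strictly negative on some interval $(0,\varepsilon)$. Now assume the hypothesis of the proposition, that $Z_\beta$ does not vanish on $\mathbb{R}_{>0}$. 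Since $\mathbb{R}_{>0}$ is connected and $Z_\beta$ is continuous and nowhere zero there, the intermediate value theorem forces $Z_\beta$ to keep a constant sign on all of $\mathbb{R}_{>0}$; because it is strictly negative on $(0,\varepsilon)$, that constant sign must be negative. Hence $Z_\beta<0$, equivalently $h''_{-\beta,\beta}<0$, throughout $\mathbb{R}_{>0}$, which is precisely the claimed strict concavity.

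There is essentially no obstacle internal to this proposition: it is a clean reformulation followed by a one-line connectedness argument, resting entirely on the positivity of $F_{-\beta,\beta}$ and on the already-established local negativity near $\theta=0$. The genuine difficulty is pushed into verifying the hypothesis itself, namely that $Z_\beta$ never vanishes on $\mathbb{R}_{>0}$, which is the substantive step to be handled afterwards (for instance by exploiting the linear relation \eqref{Fthird} to eliminate $F'''_{-\beta,\beta}$ from $Z_\beta$ and then using the sign information on $F''_{-\beta,\beta}$ and $F'''_{-\beta,\beta}$ from Lemma~\ref{LemmaFthird}). Accordingly, my effort would concentrate there rather than on the proposition, which follows immediately once that non-vanishing is secured.
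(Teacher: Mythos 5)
Your proposal is correct and follows essentially the same route as the paper: both rely on the identity $h''_{-\beta,\beta}=Z_\beta/F_{-\beta,\beta}^3$ with $F_{-\beta,\beta}>0$, the previously established strict negativity of $h''_{-\beta,\beta}$ (hence of $Z_\beta$) in a positive neighborhood of $\theta=0$, and a continuity/intermediate-value argument to propagate the negative sign over all of $\mathbb{R}_{>0}$ under the non-vanishing hypothesis. No gap to report.
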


The next step is to use the fact that the r.h.s. of \eqref{ZeroConcavity2} may be seen as a polynomial of degree $2$ in $F_{-\beta,\beta}(\theta)$. In particular, zeros  $\theta_c$ of $Z_\beta$ are either zeros of $F'''_\beta$ or solutions of the system
\begin{equation}
    \label{EqToSatisfy}  
\begin{split}
&9(F'_{-\beta,\beta}(\theta_c))^2(F''_{-\beta,\beta}(\theta_c))^2-8(F'_{-\beta,\beta}(\theta_c))^3F'''_{-\beta,\beta}(\theta_c)\geq 0,\\
&F_{-\beta,\beta}(\theta_c)=\frac{3}{2}\frac{F'_{-\beta,\beta}(\theta_c)F''_{-\beta,\beta}(\theta_c)}{F'''_{-\beta,\beta}(\theta_c)}\\
&\quad\pm\frac{1}{2F'''_{-\beta,\beta}(\theta_c)}\sqrt{ 9(F'_{-\beta,\beta}(\theta_c))^2(F''_{-\beta,\beta}(\theta_c))^2-8(F'_{-\beta,\beta}(\theta_c))^3F'''_{-\beta,\beta}(\theta_c)}\\
&=-\frac{b_\beta(\theta_c)}{2a_\beta(\theta_c)} \pm \frac{1}{2a_\beta(\theta_c)}\sqrt{b_\beta(\theta_c)^2-4a_\beta(\theta_c)c_\beta(\theta_c)}.
\end{split}
\end{equation}
The first inequality in \eqref{EqToSatisfy} is necessary otherwise \eqref{ZeroConcavity} which is polynomial of degree $2$ in $F_{-\beta,\beta}(\theta_c)$ would have no real roots and thus \eqref{ZeroConcavity} would not have solutions on $\mathbb{R}_{>0}$ ending the proof. Therefore, we shall define
\begin{align*}
    T_{\beta,\pm}(\theta)&\coloneqq  -F_{-\beta,\beta}(\theta)+\frac{3}{2}\frac{F'_{-\beta,\beta}(\theta)F''_{-\beta,\beta}(\theta)}{F'''_{-\beta,\beta}(\theta)} \\
    &\quad\pm\frac{1}{2F'''_{-\beta,\beta}(\theta)}\sqrt{ 9(F'_{-\beta,\beta}(\theta))^2(F''_{-\beta,\beta}(\theta))^2-8(F'_{-\beta,\beta}(\theta))^3F'''_{-\beta,\beta}(\theta)}\\
&=-\frac{b_\beta(\theta)}{2a_\beta(\theta)} \pm \frac{1}{2a_\beta(\theta)}\sqrt{b_\beta(\theta)^2-4a_\beta(\theta)c_\beta(\theta)}-F_{-\beta,\beta}(\theta),
\end{align*}
and we have the following proposition.

\begin{proposition}\label{PropSufficientCondition} Let $\beta>0$. A sufficient condition to prove strict concavity of $h_{-\beta,\beta}$ on $\mathbb{R}_{>0}$ is to prove that the function $S_\beta$ is strictly positive on $\mathbb{R}_{>0}$ where
\begin{equation}
    \label{EqToSatisfy2}
\begin{split}
S_\beta(\theta)&\coloneqq  9\theta (F_{-\beta,\beta}''(\theta))^5\\
&\quad+\left(42\theta^2-9\right)F_{-\beta,\beta}'(\theta) (F_{-\beta,\beta}''(\theta))^4\\
&\quad-15\left(\beta^2-\frac{79}{15}\theta^2+2\right)\theta (F_{-\beta,\beta}'(\theta))^2(F_{-\beta,\beta}''(\theta))^3\\
&\quad+\left(75\theta^4-(42\beta^2+36)\theta^2-\beta^4+12\beta^2-3\right)(F_{-\beta,\beta}'(\theta))^3(F_{-\beta,\beta}''(\theta))^2 \\
&\quad+4\left(9\theta^4-\left(10\beta^2+\frac{9}{2}\right)\theta^2+\beta^4+\frac{9}{2}\beta^2-\frac{3}{2}\right)\theta(F_{-\beta,\beta}'(\theta))^4F_{-\beta,\beta}''(\theta)\\
&\quad+\left(7\theta^6-(13\beta^2+3)\theta^4+(5\beta^4+6\beta^2-3)\theta^2+(\beta^2-1)^3\right)(F_{-\beta,\beta}'(\theta))^5.
\end{split}
\end{equation}
\end{proposition}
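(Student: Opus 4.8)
The plan is to combine Proposition~\ref{SuffCond1} with a ``first--zero'' argument for $Z_\beta$. By that proposition it is enough to show $Z_\beta$ does not vanish on $\mathbb{R}_{>0}$, and since $h''_{-\beta,\beta}=Z_\beta/(F_{-\beta,\beta})^3$ with $F_{-\beta,\beta}>0$, this is the same as showing $h''_{-\beta,\beta}<0$ there. First I would recall that $Z_\beta<0$ on a positive neighborhood of $0$, already established from the Taylor expansion of $h''_{-\beta,\beta}$ at $\theta=0$. If $Z_\beta$ vanished somewhere on $\mathbb{R}_{>0}$, I would take the smallest such point $\theta_c$; then $Z_\beta(\theta_c)=0$ while $Z_\beta<0$ on $(0,\theta_c)$, so necessarily $Z_\beta'(\theta_c)\ge 0$. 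The whole argument aims to show that $Z_\beta'(\theta_c)\ge 0$ is incompatible with $S_\beta(\theta_c)>0$.

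The engine of the reduction is the linear relation~\eqref{Fthird}: it expresses $F'''_{-\beta,\beta}$ and, after one further differentiation, $F''''_{-\beta,\beta}$ as explicit polynomial combinations of $F'_{-\beta,\beta}$ and $F''_{-\beta,\beta}$ with coefficients polynomial in $\theta,\beta$. Substituting these, both $Z_\beta$ and $Z_\beta'$ become degree-two polynomials in the single value $F_{-\beta,\beta}(\theta_c)$ whose coefficients are polynomials in $F'_{-\beta,\beta}(\theta_c),F''_{-\beta,\beta}(\theta_c),\theta_c,\beta$. I would then eliminate $F_{-\beta,\beta}(\theta_c)$ between $Z_\beta(\theta_c)=0$ and $Z_\beta'(\theta_c)\ge 0$: the first equation pins $F_{-\beta,\beta}(\theta_c)$ to one of the two roots appearing in~\eqref{EqToSatisfy}, that is $T_{\beta,\pm}(\theta_c)=0$, and inserting that root into $Z_\beta'(\theta_c)$ and clearing the radical $\sqrt{b_\beta^2-4a_\beta c_\beta}$ collapses $Z_\beta'(\theta_c)$---after factoring out a manifestly signed power of $F'_{-\beta,\beta}$, nonzero because $F'_{-\beta,\beta}<0$ on $\mathbb{R}_{>0}$ by Lemma~\ref{LemmaFthird}---onto the degree-five form~\eqref{EqToSatisfy2} up to a factor of definite sign. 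Consequently $Z_\beta'(\theta_c)\ge 0$ forces $S_\beta(\theta_c)\le 0$, so that $S_\beta>0$ on $\mathbb{R}_{>0}$ rules out any such $\theta_c$, giving $Z_\beta<0$ throughout and hence the claimed strict concavity.

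Two side issues accompany the main computation. First, the quadratic-in-$F_{-\beta,\beta}$ picture degenerates exactly where $a_\beta=F'''_{-\beta,\beta}$ vanishes; at those points I would argue from the degenerate form $b_\beta F_{-\beta,\beta}+c_\beta=0$ of $Z_\beta=0$, using the sign data for $F''_{-\beta,\beta}$ and $F'''_{-\beta,\beta}$ collected in Lemma~\ref{LemmaFthird}, that $Z_\beta$ still cannot vanish there. Second, one must identify which of the two roots is actually realized, or else show $Z_\beta'<0$ for \emph{both}; here the sign of $F'''_{-\beta,\beta}$ near $0$ (positive for $\beta<\sqrt{3}$, negative for $\beta>\sqrt{3}$, as in Lemma~\ref{LemmaFthird}) together with $F_{-\beta,\beta}>0$ and $F'_{-\beta,\beta}<0$ is precisely what selects the branch and fixes the sign of the leftover factor.

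The hard part will be exactly this algebraic elimination: substituting~\eqref{Fthird}, performing the resultant-type elimination of $F_{-\beta,\beta}(\theta_c)$ against the radical, and cancelling the common $F'_{-\beta,\beta}$ power so as to land on precisely the expression~\eqref{EqToSatisfy2}, all the while tracking signs so that ``$Z_\beta'\ge 0$ at a first zero'' translates faithfully into ``$S_\beta\le 0$''. Everything else is soft analysis: continuity and the intermediate value theorem for the first-zero step, and the qualitative sign information already supplied by Lemma~\ref{LemmaFthird}. Note that the positivity of $S_\beta$ is not established here---it is the hypothesis whose sufficiency this proposition asserts, and whose verification is deferred.
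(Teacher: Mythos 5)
Your proposal shares the paper's algebraic skeleton: reduce via Proposition~\ref{SuffCond1} to non-vanishing of $Z_\beta$, view $Z_\beta=a_\beta F_{-\beta,\beta}^2+b_\beta F_{-\beta,\beta}+c_\beta$ as a quadratic in $F_{-\beta,\beta}$, and use \eqref{Fthird} to eliminate the higher derivatives so that a resultant-type expression collapses onto $S_\beta$ (this is exactly the paper's ``tedious computation'', which shows the squared numerator of $T'_{\beta,\pm}$ equals $8(F'''_{-\beta,\beta})^2F'_{-\beta,\beta}S_\beta$). The gap is in the sign logic of your first-zero argument. Write $D=b_\beta^2-4a_\beta c_\beta$, let $r_\pm$ be the two roots and $T_{\beta,\pm}=r_\pm-F_{-\beta,\beta}$. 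Factoring $Z_\beta=a_\beta(F_{-\beta,\beta}-r_+)(F_{-\beta,\beta}-r_-)$ gives, at a zero $\theta_c$ of $Z_\beta$, the \emph{branch-dependent} identities $Z_\beta'(\theta_c)=-\sqrt{D}\,T'_{\beta,+}(\theta_c)$ on the $+$ branch but $Z_\beta'(\theta_c)=+\sqrt{D}\,T'_{\beta,-}(\theta_c)$ on the $-$ branch: clearing the radical by squaring destroys precisely this sign. Indeed, since the squared expression satisfies (up to positive factors) $T'_{\beta,+}T'_{\beta,-}\propto-F'_{-\beta,\beta}S_\beta$, the product of the two candidate values of $Z_\beta'(\theta_c)$ is $-D\,T'_{\beta,+}T'_{\beta,-}\propto F'_{-\beta,\beta}S_\beta$, which is strictly \emph{negative} under your hypothesis $S_\beta>0$ (recall $F'_{-\beta,\beta}<0$ on $\mathbb{R}_{>0}$ by Lemma~\ref{LemmaFthird}). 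So exactly one of the two branches always yields $Z_\beta'(\theta_c)>0$, i.e.\ is fully compatible with $\theta_c$ being a first zero: your implication ``$Z_\beta'(\theta_c)\geq0$ forces $S_\beta(\theta_c)\leq0$'' is false on that branch, and your fallback of proving $Z_\beta'<0$ for \emph{both} branches is impossible. Concretely, under $S_\beta>0$ both $T'_{\beta,\pm}$ are strictly negative (they are negative near $\theta=0$ by \eqref{Identities} and cannot vanish), so a first zero at which $F_{-\beta,\beta}$ meets the root $r_+$ satisfies $Z_\beta'(\theta_c)=-\sqrt{D}\,T'_{\beta,+}(\theta_c)>0$ and produces no contradiction whatsoever.

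This cannot be repaired by any pointwise test at a hypothetical first zero: one must show that $T_{\beta,+}$ itself never vanishes, which is a global statement. That is what the paper does: positivity of $S_\beta$ forbids zeros of $T'_{\beta,\pm}$, hence $T_{\beta,\pm}$ are strictly monotone on each interval where they are smooth; then the signs near $\theta=0$ from \eqref{Identities}, the sign changes by divergence at the poles located at the zeros of $F'''_{-\beta,\beta}$ (Lemma~\ref{LemmaFthird}), and, crucially, the limit $T_{\beta,\pm}\to0$ as $\theta\to+\infty$ pin down a constant nonzero sign on each interval (a strictly decreasing function with limit $0$ at $+\infty$ is strictly positive). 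For $\beta>\sqrt{3}$, for instance, $T_{\beta,+}$ is negative on $(0,\theta_1)$ and positive on $(\theta_1,+\infty)$: it changes sign through a pole without ever vanishing, behavior invisible to a local argument at $\theta_c$. The same global sign information is also what excludes the degenerate points where $F'''_{-\beta,\beta}=0$ as zeros of $Z_\beta$ (the paper evaluates the smooth branch $T_{\beta,\pm}$ there and uses its already-established sign), so your sketch of that side case inherits the same gap.
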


\begin{proof}[Proof of Proposition \ref{PropSufficientCondition}]
As explained in Proposition \ref{SuffCond1}, a sufficient condition to prove the strict concavity of $h_{-\beta,\beta}$ on $\mathbb{R}_{>0}$ is to show that $Z_\beta$ does not vanish on $\mathbb{R}_{>0}$. Moreover, the previous discussion implies that zeros of $Z_\beta$ are either zeros of $F'''_{-\beta,\beta}$ or zeros of $T_{\pm,\beta}$. Let us first prove that the functions $T_{\beta,+}$ and $T_{\beta,-}$ do not vanish on $\mathbb{R}_{>0}\setminus \mathcal{R}_\beta$ where $\mathcal{R}_\beta$ is the set of zeros of $F'''_{-\beta,\beta}$. Let us observe that for any $\theta\in \mathbb{R}_{>0}\setminus \mathcal{R}_\beta$:
\begin{equation}
     \label{TprimeEq}
\begin{split}
T_{\beta,\pm}'(\theta)&= 	
    \left(
    \frac{3F'_{-\beta,\beta}(\theta)F''_{-\beta,\beta}(\theta)
    \pm\sqrt{ 9(F'_{-\beta,\beta}(\theta))^2(F''_{-\beta,\beta}(\theta))^2-8(F'_{-\beta,\beta}(\theta))^3F'''_{-\beta,\beta}(\theta)}}
    {2F'''_{-\beta,\beta}(\theta)} 
    \right)' -F'_{-\beta,\beta}(\theta)\\
&= \frac{1}{2\sqrt{b_\beta(\theta)^2-4a_\beta(\theta)c_\beta(\theta)}}
\Big(\left(a_\beta'(\theta)b_\beta(\theta)-a_{\beta}(\theta)b'_\beta(\theta)-2F''_{-\beta,\beta}(\theta)\right)\sqrt{b_\beta(\theta)^2-4a_\beta(\theta)c_\beta(\theta)}\\
&\qquad\qquad\mp\left(b_\beta(\theta)^2a_\beta'(\theta)-b_\beta(\theta)a_\beta(\theta)b_{\beta}'(\theta)+2a_\beta(\theta)c'_\beta(\theta)-2a_\beta(\theta)c_\beta(\theta)a'_\beta(\theta)\right)\Big)
\end{split}
\end{equation}
\normalsize{is} only expressed in terms of $F_{-\beta,\beta}'$ and its derivatives that are classical functions. Zeros of $T'_{\beta,\pm}$ must satisfy (taking the square of the numerator of \eqref{TprimeEq} to remove the $\pm$ sign):
\begin{multline*}
     \left(b_\beta(\theta)^2a_\beta'(\theta)-b_\beta(\theta)a_\beta(\theta)b_{\beta}'(\theta)+2a_\beta(\theta)c'_\beta(\theta)-2a_\beta(\theta)c_\beta(\theta)a'_\beta(\theta)\right)^2\\
-\left(a_\beta'(\theta)b_\beta(\theta)-a_{\beta}(\theta)b'_\beta(\theta)-2F''_{-\beta,\beta}(\theta)\right)^2\left(b_\beta(\theta)^2-4a_\beta(\theta)c_\beta(\theta)\right)=0.
\end{multline*}
Replacing $F_{-\beta,\beta}'''$ and $F_{-\beta,\beta}^{(4)}$ in terms of $F_{-\beta,\beta}'$ and $F_{-\beta,\beta}''$ using \eqref{Fthird} gives after a tedious computation that the r.h.s. is of the form $8F'''_{-\beta,\beta}(\theta)^2F'_{-\beta,\beta}(\theta)S_\beta(\theta)$. 
Therefore zeros of $T'_{\beta,\pm}$ are among those of $S_\beta$. Simple asymptotic expansions around $\theta=0$ and $\theta\to +\infty$ provide the following results:

\begin{equation}\label{Identities}
\begin{split}
&\lim_{\theta\to +\infty}T_{\beta,\pm}(\theta)=0,\\
&T_{\beta,+}(0)=-\frac{\left(\sqrt{2\pi}\left(2\phi(\beta) -1\right) (\beta^2-3) \exp\left( \frac{\beta^2}{2} \right)+6\beta\right) \exp\left(-\frac{\beta^2}{2} \right) \sqrt{2}}{2\sqrt{\pi}(\beta^2-3)} \,\, ,\forall \, \beta\in\mathbb{R}_{>0}\setminus\{\sqrt{3}\},\\
&T'_{\beta,+}(\theta)=-\frac{2\beta}{3\sqrt{2\pi}}e^{-\frac{1}{2}\beta^2}\theta+ O(\theta^2) \,\, ,\,\,\forall \, \beta\in\mathbb{R}_{>0}\setminus\{\sqrt{3}\},\\
&T_{\beta,-}(0)=-2\Phi(\beta) +1+O(\theta^2) \,\, ,\,\,\forall \, \beta\in\mathbb{R}_{>0}\setminus\{\sqrt{3}\}, \\
&T'_{\beta,-}(\theta)=-\frac{4\beta^5}{3\sqrt{2\pi}(\beta^2-3)^2}e^{-\frac{1}{2}\beta^2}\theta+ O(\theta^2) \,\, ,\,\,\forall \, \beta\in\mathbb{R}_{>0}\setminus\{\sqrt{3}\},\\
&T_{\sqrt{3},+}(\theta)=\frac{3e^{-\frac{3}{2}}\sqrt{6}}{\sqrt{\pi}\theta^2} +O(1), \\
&T'_{\sqrt{3},+}(\theta)=-\frac{6e^{-\frac{3}{2}}\sqrt{6}}{\sqrt{\pi}\theta^3} + O(\theta),\\
&T_{\sqrt{3},-}(\theta)=-2\Phi(\sqrt{3})+1 +O(\theta^2),\\
&T'_{\sqrt{3},-}(\theta)=-\frac{1}{3\sqrt{\pi}}e^{-\frac{3}{2}} \sqrt{6}\theta +O(\theta^2).
\end{split}
\end{equation}

\normalsize{In} particular, in all cases, we get that the functions $\left(T'_{\beta,-},T'_{\beta,+}\right)$ are always strictly negative in a positive neighborhood of $\theta=0$ for any value of $\beta>0$. 
Let us show that proving that $S_\beta$ is strictly positive on $\mathbb{R}_{>0}$ is a sufficient condition to get that both functions $(T_{\beta,-},T_{\beta,+})$ do not vanish on $\mathbb{R}_{>0}$. Indeed, if we assume that $S_\beta(\theta)$ is strictly positive on $\mathbb{R}_{>0}$ then it implies that $T'_{\beta,\pm}$ cannot vanish. Depending on the value of $\beta$, we have three cases:
\begin{itemize}\item For $\beta<\sqrt{3}$: we have that $T'_{\beta,\pm}$ is strictly negative on $(0,\theta_1)$ so that $T_{\beta,\pm}$ is a strictly decreasing function on $(0,\theta_1)$. Note that $T_{\beta,-}$ diverges at $\theta_1$ and changes sign (because $3F'_{-\beta,\beta}(\theta_1)F''_{-\beta,\beta}(\theta_1)<0$ from Lemma \ref{LemmaFthird}). Moreover, from \eqref{Identities}, we have $T_{\beta,-}(0)<0$. This implies that $\underset{\theta\to \theta_{1,-}}{\lim} T_{\beta,-}=-\infty$ and  $\underset{\theta\to \theta_{1,+}}{\lim} T_{\beta,-}=+\infty$. Therefore, the sign of $T'_{\beta,-}$ is necessarily negative on $(\theta_1,+\infty)$ so that $T_{\beta,-}$ is strictly decreasing on $(\theta_1,+\infty)$. Since its limit is zero at infinity, it remains strictly positive on $(\theta_1,+\infty)$. Thus, we conclude that $T_{\beta,-}$ never vanishes on $\mathbb{R}_{>0}$. The situation for $T_{\beta,+}$ is simpler. 
Indeed, $T_{\beta,+}$ is a smooth function at  $\theta=\theta_1$ (because $3F'_{-\beta,\beta}(\theta_1)F''_{-\beta,\beta}(\theta_1)<0$ from Lemma \ref{LemmaFthird}). Therefore the sign of $T_{\beta,+}'$ remains constant on $\mathbb{R}_{>0}$ and thus $T_{\beta,+}$ is a decreasing function on $\mathbb{R}_{>0}$. Since its limit at infinity is null, we get that it remains strictly positive on $\mathbb{R}_{>0}$. In both cases, $T_{\beta,\pm}$ does not vanish on $\mathbb{R}_{>0}$. 
\item For $\beta=\sqrt{3}$: we have that $T'_{\sqrt{3},\pm}$ is strictly negative in $(0,\theta_1)$ so that $T_{\sqrt{3},\pm}$ is a strictly decreasing function on $(0,\theta_1)$. Note that $T_{\sqrt{3},-}$ diverges at $\theta_1$ and changes sign (because $3F'_{-\sqrt{3},\sqrt{3}}(\theta_1)F''_{-\sqrt{3},\sqrt{3}}(\theta_1)<0$ from Lemma \ref{LemmaFthird}). Moreover, from \eqref{Identities}, we have $T_{\sqrt{3},-}(0)<0$. This implies that $\underset{\theta\to \theta_{1,-}}{\lim} T_{\sqrt{3},-}=-\infty$ and  $\underset{\theta\to \theta_{1,+}}{\lim} T_{\sqrt{3},-}=+\infty$. Therefore, the sign of $T'_{\sqrt{3},-}$ is necessarily negative on $(\theta_1,+\infty)$ so that $T_{\sqrt{3},-}$ is strictly decreasing on $(\theta_1,+\infty)$. Since its limit is zero at infinity, it remains strictly positive on $(\theta_1,+\infty)$. Thus, we conclude that $T_{\sqrt{3},-}$ never vanishes on $\mathbb{R}_{>0}$. The situation for $T_{\sqrt{3},+}$ is simpler. We have from \eqref{Identities} that $T_{\sqrt{3},+}(0)>0$ and $T_{\sqrt{3},+}$ is a smooth function at  $\theta=\theta_1$ (because $3F'_{-\sqrt{3},\sqrt{3}}(\theta_1)F''_{-\sqrt{3},\sqrt{3}}(\theta_1)<0$ from Lemma \ref{LemmaFthird}). Therefore the sign of $T_{\sqrt{3},+}'$ remains constant on $\mathbb{R}_{>0}$ and $T_{\sqrt{3},+}$ is a decreasing function on $\mathbb{R}_{>0}$. Since its limit is null, we get that it remains strictly positive on $\mathbb{R}_{>0}$. In both cases, $T_{\sqrt{3},\pm}$ does not vanish on $\mathbb{R}_{>0}$.
\item For $\beta>\sqrt{3}$, we get that $T'_{\beta,\pm}$ is strictly negative in $(0,\theta_1)$. Note that $T_{\beta,+}$ is smooth at $\theta_2$ but diverges and changes sign at $\theta=\theta_1$ (because $3F'_{-\beta,\beta}(\theta_1)F''_{-\beta,\beta}(\theta_1)<0$ and $3F'_{-\beta,\beta}(\theta_2)F''_{-\beta,\beta}(\theta_2)>0$ from Lemma \ref{LemmaFthird}). Moreover, we have from \eqref{Identities} $T_{\beta,+}(0)<0$ so that $\underset{\theta\to \theta_{1,-}}{\lim} T_{\beta,+}=-\infty$ and  $\underset{\theta\to \theta_{1,+}}{\lim} T_{\beta,+}=+\infty$. Therefore $T_{\beta,+}$ must be decreasing on $(\theta_1,+\infty)$ and since its limit is $0$ at infinity we conclude that it is strictly positive on $(\theta_1,+\infty)$. The situation for $T_{\beta,-}$ is similar. It is smooth at $\theta_1$ but diverges and changes sign at $\theta_2$ (because $3F'_{-\beta,\beta}(\theta_1)F''_{-\beta,\beta}(\theta_1)<0$ and $3F'_{-\beta,\beta}(\theta_2)F''_{-\beta,\beta}(\theta_2)>0$ from Lemma \ref{LemmaFthird}). We have $T_{\beta,-}(0)<0$ so that it is strictly negative on $(0,\theta_2)$ and $\underset{\theta\to \theta_{2,-}}{\lim} T_{\beta,-}=-\infty$ and  $\underset{\theta\to \theta_{2,+}}{\lim} T_{\beta,-}=+\infty$. Therefore it must strictly decrease on $(\theta_2,+\infty)$ and since its limit is $0$ at infinity, we end up with the fact that it is strictly positive on $(\theta_2,+\infty)$. In both cases, $T_{\beta,\pm}$ does not vanish on $\mathbb{R}_{>0}$.  
\end{itemize}

Thus, we conclude that proving that $S_\beta$ is strictly positive on $\mathbb{R}_{>0}$ is a sufficient condition to get that $T_{\beta,\pm}$ does not vanish on $\mathbb{R}_{>0} \setminus \mathcal{R}_\beta$. Let us prove that the fact that $S_\beta$ is strictly positive on $\mathbb{R}_{>0}$ also excludes the zeros of $F'''_{-\beta,\beta}$ as potential zeros of $Z_\beta$. Under the assumption that $S_\beta$ is strictly positive on $\mathbb{R}_{>0}$ we have
\begin{itemize}\item For $\beta>\sqrt{3}$, $F'''_{-\beta,\beta}$ has exactly two zeros denoted $\theta_1<\theta_2$ on $\mathbb{R}_{>0}$. As explained above, $T_{\beta,+}$ is a smooth function at $\theta_2$ and we have $T_{\beta,+}(\theta_2)=-\frac{c_\beta(\theta_2)}{b_{\beta}(\theta_2)}- F_{-\beta,\beta}(\theta_2)$. We have proved above that $T_{\beta,+}$ is strictly positive on $(\theta_1,+\infty)$ so that $T_{\beta,+}(\theta_2)= -\frac{c_\beta(\theta_2)}{b_{\beta}(\theta_2)}- F_{-\beta,\beta}(\theta_2)>0$. However, if $\theta_2$ was a zero of $Z_\beta$, then we would have from \eqref{ZeroConcavity} $F_{-\beta,\beta}(\theta_2)+\frac{c_\beta(\theta_2)}{b_{\beta}(\theta_2)}=0$ leading to a contradiction. Therefore $\theta_2$ is not a zero of $Z_\beta$. Similarly, we have proved that $T_{\beta,-}$ is a smooth function at $\theta_1$ and we have $T_{\beta,-}(\theta_1)=-\frac{c_\beta(\theta_1)}{b_{\beta}(\theta_1)}- F_{-\beta,\beta}(\theta_1)$. Moreover, we have proved above that $T_{\beta,-}$ is strictly negative on $(0,\theta_2)$ so that $T_{\beta,-}(\theta_1)= -\frac{c_\beta(\theta_1)}{b_{\beta}(\theta_1)}- F_{-\beta,\beta}(\theta_1)<0$. However, if $\theta_1$ was a zero of $Z_\beta$, then we would have from \eqref{ZeroConcavity} $F_{-\beta,\beta}(\theta_1)+\frac{c_\beta(\theta_1)}{b_{\beta}(\theta_1)}=0$ leading to a contradiction. Therefore $\theta_1$ is not a zero of $Z_\beta$. 
\item For $\beta\leq \sqrt{3}$: $F'''_{-\beta,\beta}$ has exactly one zero denoted $\theta_1$ on $\mathbb{R}_{>0}$. As explained above, $T_{\beta,+}$ is a smooth function at  $\theta=\theta_1$ and it is strictly positive on $\mathbb{R}_{>0}$. In particular we have $T_{\beta,+}(\theta_1)=-\frac{c_\beta(\theta_1)}{b_{\beta}(\theta_1)}- F_{-\beta,\beta}(\theta_1)>0$. However, if $\theta_1$ was a zero of $Z_\beta$, then we would have from \eqref{ZeroConcavity} $F_{-\beta,\beta}(\theta_1)+\frac{c_\beta(\theta_1)}{b_{\beta}(\theta_1)}=0$ leading to a contradiction. Therefore $\theta_1$ is not a zero of $Z_\beta$. 
\end{itemize}
This concludes the proof of Proposition \ref{PropSufficientCondition}.
\end{proof}

\subsection{Proof of the sufficient condition}

From Proposition \ref{PropSufficientCondition}, a sufficient condition to have strict concavity of $h_{-\beta,\beta}$ on $\mathbb{R}_{>0}$ is to prove that $S_\beta$ is strictly positive on $\mathbb{R}_{>0}$. In this section, we shall propose a sufficient condition to obtain this result. Let us first observe that:
\begin{align*}
     F'(\theta)
&= -\frac{2}{\sqrt{2\pi}}e^{-\frac{1}{2}\theta^2-\frac{1}{2}\beta^2}\sinh(\beta\theta),\\
F''(\theta)
&=\frac{2}{\sqrt{2\pi}}e^{-\frac{1}{2}\theta^2-\frac{1}{2}\beta^2}\left(\theta \sinh(\beta\theta)-\beta \cosh(\beta\theta)\right).
\end{align*}
The term $\frac{2}{\sqrt{2\pi}}e^{-\frac{1}{2}\theta^2-\frac{1}{2}\beta^2}$ factors out of $S_\beta$ because we have homogeneous powers. Therefore, let us thus rewrite $S_\beta(\theta)=2\left(\frac{2}{\sqrt{2\pi}}e^{-\frac{1}{2}\theta^2-\frac{1}{2}\beta^2}\right)^5\td{S}_\beta(\theta)$. We have:
\begin{align*}
     \td{S}_\beta(\theta)&= \sinh(5\beta\theta)+ \left(4\beta^6+4(3\theta^2+6)\beta^4+12\beta^2-5\right)\sinh(3\beta\theta)\\
&+ \left(-12\beta^6+4(3\theta^2+18)\beta^4-36\beta^2+10)\right)\sinh(\beta\theta)\\
&-4\left(3\beta^2+\theta^2+6\right)\beta^3\theta\cosh(3\beta\theta)\\
&+4\left(-33\beta^2+\theta^2+6\right)\beta^3\theta\cosh(\beta\theta).
\end{align*}
Proving that $S_{\beta}$ is strictly positive on $\mathbb{R}_{>0}$ is equivalent to prove that $\td{S}_\beta$ is strictly positive on $\mathbb{R}_{>0}$. Let us perform the following change of variables: $s=\beta \theta$ and define $A_\beta(s)\coloneqq \td{S}_\beta\left(\frac{s}{\beta}\right)$. Since $\beta>0$, it is obvious that proving that $\td{S}_\beta$ is strictly positive on $\mathbb{R}_{>0}$ is equivalent to proving that $A_{\beta}$ is also strictly positive on $\mathbb{R}_{>0}$. We obtain:
\begin{align*}
    A_\beta(s)&=\sinh(5s)+\left(4\beta^6+24\beta^4+12s^2\beta^2+12\beta^2-5\right)\sinh(3s)\\
&+ \left(-12\beta^6+72\beta^4+12s^2\beta^2-36\beta^2+10\right)\sinh(s)\\
&-4(3\beta^4+6\beta^2+s^2)s\cosh(3s)+4(-33\beta^4+6\beta^2+s^2)s\cosh(s).
\end{align*}
Let us observe that $B_s(\mu)\coloneqq A_{\beta}(s)$ is a polynomial of degree $3$ in $\mu\coloneqq \beta^2$:
\begin{align*}
    B_s(\mu)&\coloneqq A_{\mu^2}(s)\\
&=4\left(\sinh(3s)-3\sinh(s)\right)\mu^3 \\
&\quad+4\left(6\sinh(3s)+18\sinh(s)-33s\cosh(s)-3s\cosh(3s)\right)\mu^2\\
&\quad+4\left(3(s^2+1)\sinh(3s)+3(s^2-3)\sinh(s)+6s\cosh(s)-6s\cosh(3s)\right)\mu\\
&\quad+\sinh(5s)-5\sinh(3s)+10\sinh(s)+4s^3\cosh(s)-4s^3\cosh(3s).
\end{align*}
Let us first notice that $B_s(0)=\sinh(5s)-5\sinh(3s)+10\sinh(s)+4s^3\cosh(s)-4s^3\cosh(3s)>0$ for any $s>0$ so that $B_s$ is strictly positive in a positive neighborhood of $s=0$. Indeed, we have the following lemma.
\begin{lemma}\label{LemmaBs0}For any $s>0$, we have 
\beq \sinh(5s)-5\sinh(3s)+10\sinh(s)+4s^3\cosh(s)-4s^3\cosh(3s)>0.\eeq
\end{lemma}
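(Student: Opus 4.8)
The plan is to collapse the five hyperbolic terms into a single product via multiple-angle identities, and then reduce the lemma to a one-line power-series positivity statement.

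First I would rewrite the three sine-hyperbolic terms. Expanding $(2\sinh s)^5=32\sinh^5 s$ in exponentials and regrouping gives the identity $\sinh(5s)-5\sinh(3s)+10\sinh(s)=16\sinh^5 s$. For the two cosine-hyperbolic terms I would use $\cosh s-\cosh 3s=-2\sinh(2s)\sinh s=-4\sinh^2 s\cosh s$, so that $4s^3\cosh s-4s^3\cosh(3s)=-16 s^3\sinh^2 s\cosh s$. Adding these, the quantity in the lemma factors as
\[
\sinh(5s)-5\sinh(3s)+10\sinh(s)+4s^3\cosh(s)-4s^3\cosh(3s)=16\sinh^2 s\left(\sinh^3 s-s^3\cosh s\right).
\]
Since $\sinh^2 s>0$ for $s>0$, the lemma reduces to the single inequality $\psi(s)\coloneqq\sinh^3 s-s^3\cosh s>0$ on $\mathbb{R}_{>0}$.

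To establish this I would compare Taylor series, which is clean here because $\sinh^3 s=\tfrac14(\sinh 3s-3\sinh s)$ makes the coefficients explicit. The coefficient of $s^{2m+1}$ in $\sinh^3 s$ is $\frac{3^{2m+1}-3}{4(2m+1)!}$, while $s^3\cosh s=\sum_{j\ge0}\frac{s^{2j+3}}{(2j)!}$ contributes $\frac{1}{(2m-2)!}$ to the same power for $m\ge1$ (and nothing for $m=0$). Hence $\psi$ is an odd entire function whose coefficient of $s^{2m+1}$, for $m\ge1$, is
\[
d_m=\frac{3^{2m+1}-3}{4(2m+1)!}-\frac{1}{(2m-2)!},
\]
and multiplying by $(2m+1)!$ shows that the sign of $d_m$ equals that of $E(m)\coloneqq\frac{3^{2m+1}-3}{4}-(2m+1)(2m)(2m-1)$. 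A direct check gives $E(1)=E(2)=0$ (and the $s$-coefficient vanishes as well), so the $s$, $s^3$, and $s^5$ coefficients of $\psi$ all vanish; this is precisely why a crude bound fails and one must reach order $s^7$. For $m\ge3$ I would show $E(m)>0$ by induction: the base case is $E(3)=546-210>0$, and the inductive step, using $\frac{3^{2m+3}-3}{4}=9\cdot\frac{3^{2m+1}-3}{4}+6$, reduces to the elementary cubic inequality $9(2m-1)(2m)\ge(2m+2)(2m+3)$, i.e. $32m^2-28m-6\ge0$, valid for all $m\ge2$. Consequently every $d_m$ with $m\ge3$ is strictly positive while all lower coefficients vanish, so $\psi(s)=\sum_{m\ge3}d_m\,s^{2m+1}>0$ for every $s>0$, which yields the lemma.

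The only delicate point is the vanishing of the three lowest Taylor coefficients: one cannot prove $\psi>0$ by a pointwise term-by-term comparison of $\sinh$ with its polynomial approximant, since $\sinh^3 s$ and $s^3\cosh s$ agree through order $s^5$ and separate only at order $s^7$. The multiple-angle rewriting of $\sinh^3 s$ is what renders the coefficients explicit enough to see this positivity cleanly, after which the exponential growth of $3^{2m+1}$ against the cubic $(2m+1)(2m)(2m-1)$ makes the remaining estimate routine.
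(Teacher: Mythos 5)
Your proof is correct, and it shares the paper's crucial first step: the factorization $\sinh(5s)-5\sinh(3s)+10\sinh(s)+4s^3\cosh(s)-4s^3\cosh(3s)=16\sinh^2(s)\left(\sinh^3(s)-s^3\cosh(s)\right)$, which the paper states directly (pulling out an additional factor of $\cosh(s)$). Where you genuinely diverge is in proving positivity of the remaining factor. The paper sets $a(s)\coloneqq \sinh^3(s)/\cosh(s)-s^3$, differentiates three times to find $a^{(3)}(s)=2\sinh^4(s)\left(4\cosh^2(s)+3\right)/\cosh^4(s)>0$, and then integrates back using $a(0)=a'(0)=a''(0)=0$ to conclude $a>0$ on $\mathbb{R}_{>0}$. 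You instead expand $\psi(s)\coloneqq\sinh^3(s)-s^3\cosh(s)$ as a power series, using $\sinh^3(s)=\tfrac14\left(\sinh(3s)-3\sinh(s)\right)$ to make the coefficients explicit, verify that the coefficients of $s$, $s^3$, $s^5$ vanish, and prove by induction that all higher coefficients are strictly positive. Both arguments must contend with the same phenomenon, namely that the function vanishes to order $7$ at the origin: the paper absorbs this through three successive derivatives vanishing at $0$, you through three vanishing Taylor coefficients. Your route avoids the quotient-rule computations (and the small miracle that $a^{(3)}$ simplifies to a manifestly positive expression), at the price of a short induction pitting $3^{2m+1}$ against a cubic in $m$; it also yields the explicit leading behaviour $\psi(s)=\tfrac{1}{15}s^7+O(s^9)$ for free. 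Your arithmetic checks out: $E(1)=E(2)=0$, $E(3)=336>0$, and the inductive step, which can be written as $E(m+1)=9E(m)+6+(2m+1)(32m^2-28m-6)$, indeed gives $E(m+1)>0$ for all $m\ge 2$ with $E(m)\ge 0$, since $32m^2-28m-6>0$ there. Term-by-term positivity of an entire function's odd Taylor coefficients legitimately implies strict positivity on $\mathbb{R}_{>0}$, so the conclusion follows.
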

\begin{proof}[Proof of Lemma \ref{LemmaBs0}] 
One may rewrite
\begin{align*}
    &\sinh(5s)-5\sinh(3s)+10\sinh(s)+4s^3\cosh(s)-4s^3\cosh(3s)=16\sinh(s)^2\left( \sinh(s)^3-s^3\cosh(s)\right)\\
&=16\sinh(s)^2\cosh(s)\left( \frac{\sinh(s)^3}{\cosh(s)}-s^3\right).
\end{align*}
Let us then define $a(s)\coloneqq \frac{\sinh(s)^3}{\cosh(s)}-s^3$ for $s>0$. We have
\begin{align*}
    a'(s)&= \frac{2\cosh(s)^4+(-3s^2-1)\cosh(s)^2-1}{\cosh(s)^2},\\
a''(s)&=\frac{(4\sinh(s)\cosh(s)^4-6s\cosh(s)^3+2\sinh(s))}{\cosh(s)^3},\\
a^{(3)}(s)&=\frac{2\sinh(s)^4(4\cosh(s)^2+3)}{\cosh(s)^4}>0,
\end{align*}
so that $a''$ is strictly increasing on $\mathbb{R}_{>0}$ and since $a''(0)=0$, we get that $a''$ is strictly positive on $\mathbb{R}_{>0}$. This implies that $a'$ is strictly increasing on $\mathbb{R}_{>0}$ and since $a'(0)=0$ we end up with $a'$ strictly positive on $\mathbb{R}_{>0}$. In the end, $a$ is strictly increasing on $\mathbb{R}_{>0}$ and $a(0)=0$ so that $a$ is strictly positive on $\mathbb{R}_{>0}$, ending the proof of Lemma~\ref{LemmaBs0}.
\end{proof}

Then, we observe that the leading coefficient of the polynomial $\mu\mapsto B_s(\mu)$ is given by $\sinh(3s)-3\sinh(s)$ and is obviously strictly positive for any $s>0$. We want to prove that for any $s>0$, $\mu \mapsto B_s(\mu)$ is a strictly positive function on $\mathbb{R}_{>0}$. We have:
\begin{align*}
    B_s'(\mu)&=12\Big[ \left(\sinh(3s)-3\sinh(s)\right)\mu^2\\
&+2 \left(2\sinh(3s)+6\sinh(s)-11s\cosh(s)-s\cosh(3s)\right)\mu\\
&+(s^2+1) \sinh(3s)+(s^2-3)\sinh(s)+2s \cosh(s)-2s\cosh(3s)\Big].
\end{align*}
The discriminant $144\Delta(s)$ of this polynomial of degree two is given by $\Delta(s)\coloneqq -64\cosh(s)\td{\Delta}(s)$ with
\begin{align*}
    \td{\Delta}(s)&=-3\cosh(s)^5+2s\sinh(s)\cosh(s)^4-6s^2\cosh(s)^3+14s\sinh(s)\cosh(s)^2\\
&-3\cosh(s)^3-3s^2\cosh(s)+2s\sinh(s)+6\cosh(s).
\end{align*}
Let us assume that $\td{\Delta}$ is strictly positive on $\mathbb{R}_{>0}$ so that $\Delta(s)$ is strictly negative, i.e. $B_s'$ does not vanish on $\mathbb{R}$ and thus remains strictly positive on $\mathbb{R}_{>0}$ (because its leading coefficient is strictly positive). Since we have proved in Lemma \ref{LemmaBs0} that $B_s(0)>0$, we obtain that $B_s$ is strictly positive on $\mathbb{R}_{>0}$ which is equivalent to say that for any $(\beta,s)\in \mathbb{R}_{>0}\times \mathbb{R}_{>0}$: $A_\beta(s)$ is strictly positive. Therefore, {a sufficient condition to obtain that $S_\beta$ is strictly positive on $\mathbb{R}_{>0}$ is that $\td{\Delta}$ is strictly positive on $\mathbb{R}_{>0}$}.  In order to prove this sufficient condition, let us observe that 
\begin{align*}
    \td{\Delta}'(s)&=\sinh(s)\Big[ 10s\cosh(s)^3\sinh(s)-13\cosh(s)^4-18s^2\cosh(s)^2\\
&+32s \sinh(s)\cosh(s)+5\cosh(s)^2-3s^2+8\Big]\\
&\coloneqq \sinh(s)Q(s),
\end{align*}
with
\begin{align*}
    Q'(s)&=2\sinh(s)\Big[20s\sinh(s)\cosh(s)^2-21\cosh(s)^3-18s^2\cosh(s)+19s\sinh(s)+21\cosh(s)\Big]\\
&\coloneqq 2\sinh(s)R(s),
\end{align*}
with
\begin{equation*}
    R(s)\coloneqq 20s\sinh(s)\cosh(s)^2-21\cosh(s)^3-18s^2\cosh(s)+19s\sinh(s)+21\cosh(s).
\end{equation*}
Let us assume that $R(s)>0$ for any $s>0$, then $Q$ is a strictly increasing function on $\mathbb{R}_{>0}$ with $Q(0)=0$ so that it is strictly positive on $\mathbb{R}_{>0}$. This implies that $\td{\Delta}$ is a strictly increasing function on $\mathbb{R}_{>0}$ so that since $\td{\Delta}(0)=0$ we get that $\td{\Delta}$ is also a strictly positive function on $\mathbb{R}_{>0}$. Therefore, we have the following result.

\begin{proposition}\label{SSS}Let $\beta>0$. A sufficient condition to obtain that $S_\beta$ is strictly positive on $\mathbb{R}_{>0}$ is to prove that $s\mapsto R(s)\coloneqq 20s\sinh(s)\cosh(s)^2-21\cosh(s)^3-18s^2\cosh(s)+19s\sinh(s)+21\cosh(s)$ is strictly positive on $\mathbb{R}_{>0}$.
\end{proposition}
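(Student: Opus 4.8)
The statement is an implication, so the task is \emph{not} to prove positivity of $R$ but to show that such positivity propagates up through the chain of elementary sign reductions already set up in the preceding discussion to yield $S_\beta>0$. The plan is therefore to assemble that chain in the forward direction; no new hard computation is needed beyond the identities already recorded. Assume $R(s)>0$ for every $s>0$. First I would climb from $R$ to $\td{\Delta}$ by two identical ``increasing-from-zero'' arguments. Since $Q'(s)=2\sinh(s)R(s)$ and $\sinh(s)>0$ on $\mathbb{R}_{>0}$, positivity of $R$ forces $Q'>0$, so $Q$ is strictly increasing on $\mathbb{R}_{>0}$; together with $Q(0)=0$ this gives $Q>0$ on $\mathbb{R}_{>0}$. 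Applying the same reasoning one level up, using $\td{\Delta}'(s)=\sinh(s)Q(s)$ and $\td{\Delta}(0)=0$, yields $\td{\Delta}>0$ on $\mathbb{R}_{>0}$.

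Next I would convert positivity of $\td{\Delta}$ into positivity of the cubic $\mu\mapsto B_s(\mu)$. By the recorded factorization $\Delta(s)=-64\cosh(s)\td{\Delta}(s)$ and $\cosh(s)>0$, the sign $\td{\Delta}>0$ means the discriminant $144\Delta(s)$ of the quadratic $\mu\mapsto B_s'(\mu)$ is strictly negative. Because the leading coefficient of that quadratic, $12\left(\sinh(3s)-3\sinh(s)\right)=48\sinh(s)^3$, is strictly positive for every $s>0$, a negative discriminant forces $B_s'(\mu)>0$ for all $\mu\in\mathbb{R}$; hence $\mu\mapsto B_s(\mu)$ is strictly increasing on $\mathbb{R}$. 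Combining this monotonicity with $B_s(0)>0$, which is exactly the content of Lemma~\ref{LemmaBs0}, I conclude $B_s(\mu)>0$ for every $\mu>0$ and every $s>0$.

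Finally I would undo the two cosmetic reductions. Recalling $B_s(\beta^2)=A_\beta(s)=\td{S}_\beta\!\left(\tfrac{s}{\beta}\right)$, the fact that $B_s(\beta^2)>0$ for all $s>0$ is precisely the statement that $\td{S}_\beta$ is strictly positive on $\mathbb{R}_{>0}$ for each fixed $\beta>0$ (as $s$ ranges over $\mathbb{R}_{>0}$, so does $\theta=s/\beta$). Since $S_\beta(\theta)=2\big(\tfrac{2}{\sqrt{2\pi}}e^{-\frac12\theta^2-\frac12\beta^2}\big)^5\td{S}_\beta(\theta)$ with a strictly positive prefactor, positivity transfers directly to $S_\beta>0$ on $\mathbb{R}_{>0}$, which is the asserted sufficient condition.

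\textbf{Main obstacle.} Because every link in this chain has essentially been written out before the statement, the proof is a matter of assembly rather than fresh analysis. The only points requiring care are that each of the two monotonicity steps depends on the correct boundary values $Q(0)=0$ and $\td{\Delta}(0)=0$ and on $\sinh>0$ on $\mathbb{R}_{>0}$, that the discriminant step relies on the displayed factorization $\Delta(s)=-64\cosh(s)\td{\Delta}(s)$ together with the strict positivity of the leading coefficient $48\sinh(s)^3$, and that Lemma~\ref{LemmaBs0} is what supplies the base value $B_s(0)>0$. The genuinely hard analytic input — the positivity of $R$ itself — is deliberately excluded from this proposition and deferred.
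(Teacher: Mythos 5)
Your proposal is correct and follows essentially the same route as the paper: the paper proves Proposition~\ref{SSS} in the discussion immediately preceding it, via exactly your chain $R>0 \Rightarrow Q'=2\sinh(s)R>0 \Rightarrow Q>0$ (using $Q(0)=0$) $\Rightarrow \td{\Delta}'=\sinh(s)Q>0 \Rightarrow \td{\Delta}>0$ (using $\td{\Delta}(0)=0$) $\Rightarrow \Delta(s)=-64\cosh(s)\td{\Delta}(s)<0 \Rightarrow B_s'>0$ on $\mathbb{R}$ (positive leading coefficient), combined with Lemma~\ref{LemmaBs0} to get $B_s(\mu)>0$ for $\mu>0$, and then undoing the substitutions $\mu=\beta^2$, $s=\beta\theta$ and the positive exponential prefactor to conclude $S_\beta>0$. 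Your explicit simplification of the leading coefficient to $48\sinh(s)^3$ is a harmless refinement of the paper's observation that $\sinh(3s)-3\sinh(s)>0$.
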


Finally we may prove this sufficient condition using the following proposition.

\begin{proposition}\label{PropRPos}Let $\beta>0$. The function $s\mapsto R(s)\coloneqq 20s\sinh(s)\cosh(s)^2-21\cosh(s)^3-18s^2\cosh(s)+19s\sinh(s)+21\cosh(s)$ is strictly positive on $\mathbb{R}_{>0}$.
\end{proposition}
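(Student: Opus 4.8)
The plan is to prove positivity of $R$ by a power-series argument rather than by the repeated-differentiation reductions used earlier in this appendix, since $R$ turns out to have an explicit Taylor expansion whose coefficients are eventually all positive. First I would linearize the products of hyperbolic functions using $\sinh s\,\cosh^2 s=\tfrac14(\sinh 3s+\sinh s)$ and $\cosh^3 s=\tfrac14(\cosh 3s+3\cosh s)$, which rewrites $R$ in the purely single-frequency form
\begin{equation*}
R(s)=5s\sinh(3s)-\tfrac{21}{4}\cosh(3s)+24\,s\sinh s-18\,s^2\cosh s+\tfrac{21}{4}\cosh s .
\end{equation*}
Each summand is an even function, so $R$ is even and I may write $R(s)=\sum_{m\ge 1}r_{2m}s^{2m}$; extracting the coefficient of $s^{2m}$ from the series of $\sinh$ and $\cosh$ (and using $(2m)!/(2m-1)!=2m$ and $(2m)!/(2m-2)!=2m(2m-1)$) gives the closed form
\begin{equation*}
(2m)!\,r_{2m}=3^{2m-1}\Bigl(10m-\tfrac{63}{4}\Bigr)-72m^2+84m+\tfrac{21}{4}.
\end{equation*}

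Next I would check the low-order cancellations: a direct substitution shows $(2m)!\,r_{2m}=0$ for $m=1$ and $m=2$, so that $r_0=r_2=r_4=0$ and the expansion starts at $r_6=\tfrac{64}{15}>0$. It then remains to prove $r_{2m}>0$ for every $m\ge 3$, i.e. that the exponential term dominates: $3^{2m-1}\bigl(10m-\tfrac{63}{4}\bigr)>72m^2-84m-\tfrac{21}{4}$. Since $10m-\tfrac{63}{4}>0$ and $72m^2-84m-\tfrac{21}{4}<72m^2$ for $m\ge 2$, it suffices to compare $u_m:=3^{2m-1}(10m-\tfrac{63}{4})$ with $v_m:=72m^2$. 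Here I would argue by a growth-rate induction: the base case $m=3$ gives $u_3=3462.75>648=v_3$, while $u_{m+1}/u_m=9\,\tfrac{10m-5.75}{10m-15.75}>9$ and $v_{m+1}/v_m=(1+1/m)^2\le \tfrac{16}{9}<9$ for $m\ge 3$, so the inequality $u_m>v_m$ propagates, and the remaining lower-order terms $84m+\tfrac{21}{4}>0$ only help. Consequently $R(s)=\sum_{m\ge 3}r_{2m}s^{2m}$ is a sum of non-negative terms with $r_6>0$, hence strictly positive for all $s>0$.

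The main obstacle I anticipate is not the analysis but the bookkeeping of the three exact cancellations $r_0=r_2=r_4=0$: these are precisely what force the leading behavior to be $\tfrac{64}{15}s^6$, and getting the rational arithmetic right in the linearized form is where an error would most easily creep in. Once the closed form for $(2m)!\,r_{2m}$ is in hand, the positivity for $m\ge3$ is the routine ``exponential beats polynomial'' estimate handled cleanly by the ratio induction above; one could alternatively invoke that $3^{2m-1}/m^2\to\infty$ together with a finite check, but the monotone ratio comparison avoids any appeal to asymptotics. I note that the differentiation route natural to the rest of the appendix is available but less attractive here, since $R$ and its first five derivatives vanish at $0$ and the sixth derivative is not of obviously constant sign, whereas the power-series coefficients are transparent.
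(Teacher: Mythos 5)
Your proof is correct, but it follows a genuinely different route from the paper. The paper keeps the mixed products $s\sinh(s)\cosh(s)^2$, etc., observes $R(s)=\tfrac{64}{15}s^6+O(s^7)$ near the origin, and then treats $R(s)=0$ as a quadratic equation in $s$: any zero would have to satisfy $s=\tfrac{20\sinh(s)\cosh(s)^2+19\sinh(s)}{36\cosh(s)}\pm\tfrac{\sinh(s)}{36\cosh(s)}\sqrt{400\cosh(s)^4-752\cosh(s)^2+361}$. It then shows the associated functions $R_\pm$ are strictly monotone — their derivatives cannot vanish because the vanishing condition reduces, after squaring, to $2000\cosh(s)^4-3781\cosh(s)^2+1805=0$, a quadratic in $\cosh(s)^2$ with negative discriminant — so $R_\pm$ never vanish, hence $R$ never vanishes and stays positive by continuity. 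You instead linearize via the multiple-angle identities $\sinh s\cosh^2 s=\tfrac14(\sinh 3s+\sinh s)$ and $\cosh^3 s=\tfrac14(\cosh 3s+3\cosh s)$, extract the exact Taylor coefficients $(2m)!\,r_{2m}=3^{2m-1}\bigl(10m-\tfrac{63}{4}\bigr)-72m^2+84m+\tfrac{21}{4}$ (which I verified, including the cancellations at $m=0,1,2$ and $(6)!\,r_6=3072$, i.e. $r_6=\tfrac{64}{15}$, matching the paper's local expansion), and prove $r_{2m}>0$ for $m\ge3$ by the ratio comparison $u_{m+1}/u_m>9>\tfrac{16}{9}\ge v_{m+1}/v_m$, whose base case and monotonicity are both sound. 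Your argument buys something the paper's does not: a fully elementary proof with no square-root or discriminant manipulations, plus the stronger global lower bound $R(s)\ge\tfrac{64}{15}\,s^6$ for all $s>0$ (since every Taylor coefficient is non-negative). The paper's argument, in turn, stays within the sign-analysis and quadratic-discriminant toolkit used throughout its appendix and avoids series bookkeeping — which, as you rightly note, is where an arithmetic slip would most easily occur in your approach; but your arithmetic is in fact correct.
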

\begin{proof}[Proof of Proposition \ref{PropRPos}]
Let us first rewrite:
\begin{align*}
    R(s)&=20s\sinh(s)\cosh(s)^2-21\cosh(s)^3-18s^2\cosh(s)+19s\sinh(s)+21\cosh(s)\\
&=20s\sinh(s)\cosh(s)^2-21\cosh(s)\sinh(s)^2-18s^2\cosh(s)+19s\sinh(s)\\
&=-\left(18\cosh(s)s^2- (20\sinh(s)\cosh(s)^2+19\sinh(s))s+21\cosh(s)\sinh(s)^2\right),
\end{align*}
and observe that
\begin{equation*}
    R(s)=\frac{64}{15}s^6+O(s^7),
\end{equation*}
so that $R$ is strictly positive in a positive neighborhood of zero. Moreover, equation $R(s)=0$ may be seen as a polynomial of degree two in $s$. In other words $R(s)=0$ with $s>0$ is equivalent to
\beq \label{Finals} s=\frac{20\sinh(s)\cosh(s)^2+19\sinh(s)}{36\cosh(s)}\pm\frac{\sinh(s)}{36\cosh(s)}\sqrt{400\cosh(s)^4-752\cosh(s)^2+361}.\eeq
Let us define
\begin{equation*}
    R_\pm(s)\coloneqq s- \frac{20\sinh(s)\cosh(s)^2+19\sinh(s)}{36\cosh(s)}-\pm\frac{\sinh(s)}{36\cosh(s)}\sqrt{400\cosh(s)^4-752\cosh(s)^2+361}.
\end{equation*}
Note in particular that 
\begin{equation}
    \label{Rpm0}
    \begin{split}
         R_+(s)&=\frac{64}{45}s^5+O(s^6),\\
         R_-(s)&=-\frac{1}{6}s-\frac{7}{18}s^3+ O(s^4).
    \end{split}
\end{equation}
Moreover, we have
\footnotesize{
\begin{equation*}
    R_{\pm}'(s)=\frac{(-40\cosh(s)^4+56\cosh(s)^2-19)\sqrt{400\cosh(s)^4-752\cosh(s)^2+361}\pm(800\cosh(s)^6-1152\cosh(s)^4+361)}{36\cosh(s)^2\sqrt{400\cosh(s)^4-752\cosh(s)^2+361}}.
\end{equation*}
}
\normalsize{Therefore, zeros} of $R'_{\pm}$ must satisfy
\footnotesize{
\begin{equation*}
    \left(-40\cosh(s)^4+56\cosh(s)^2-19\right)^2\left(400\cosh(s)^4-752\cosh(s)^2+361\right)-\left(800\cosh(s)^6-1152\cosh(s)^4+361\right)^2=0,
\end{equation*}
}
\normalsize{which} is equivalent to
\begin{equation*}
    -576\sinh(s)^4\cosh(s)^2\left(2000\cosh(s)^4-3781\cosh(s)^2+1805\right)=0.
\end{equation*}
Since the discriminant of $2000X^2-3781X+1805$ is negative (equal to $-144039$), we conclude that $R_{\pm}'$ does not vanish on $\mathbb{R}_{>0}$ and thus has a constant sign on $\mathbb{R}_{>0}$. In particular, from \eqref{Rpm0}, we get that $R'_+$ is strictly positive on $\mathbb{R}_{>0}$ so that $R_+$ is strictly increasing and since $R_+(0)=0$ we conclude that $R_+$ is strictly positive on $\mathbb{R}_{>0}$. Similarly, from \eqref{Rpm0} $R'_-$ is strictly negative on $\mathbb{R}_{>0}$ so that $R_-$ is strictly decreasing and since $R_-(0)=0$ we conclude that $R_-$ is strictly negative on $\mathbb{R}_{>0}$. In both cases, the function $R_\pm$ do not vanish on $\mathbb{R}_{>0}$ so that \eqref{Finals} cannot be satisfied on $\mathbb{R}_{>0}$ and eventually $R$ does not vanish on $\mathbb{R}_{>0}$. Since we have shown that it is strictly positive in a positive neighborhood of zero and since it is a smooth function, we conclude that function $R$ is strictly positive on $\mathbb{R}_{>0}$. This concludes the proof of Proposition \ref{PropRPos}.  
\end{proof}

Finally, we conclude from Proposition~\ref{SSS} and Proposition~\ref{PropRPos} that for any $\beta>0$, $S_\beta$ is a strictly positive function on $\mathbb{R}_{>0}$ so that from Proposition~\ref{PropSufficientCondition} we obtain that $h_{-\beta,\beta}$ is a strictly concave function on $\mathbb{R}_{>0}$. This concludes the proof of Lemma~\ref{lem:h_strict_concave} stated in the main part of the paper.
\end{proof}

\section{Proofs for truncated exponential random variables}\label{sec:app:expo}

\begin{lemma}\label{Lemmaepsilon}For any $\epsilon>0$, we have
\begin{equation*}
    P(\epsilon)\coloneqq 2e^{3\epsilon}-(\epsilon^3+6)e^{2\epsilon} +(6-\epsilon^3)e^{\epsilon}-2>0.
\end{equation*}
\end{lemma}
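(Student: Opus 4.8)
The plan is to begin by uncovering a hidden algebraic factorization of $P$. Writing $u=e^{\epsilon}$, we have $P(\epsilon)=2u^3-(\epsilon^3+6)u^2+(6-\epsilon^3)u-2$. The $\epsilon$-free part collapses, since $2u^3-6u^2+6u-2=2(u-1)^3$, while the remaining terms assemble as $-\epsilon^3(u^2+u)=-\epsilon^3 u(u+1)$. This yields
\[
P(\epsilon)=2\left(e^{\epsilon}-1\right)^3-\epsilon^3 e^{\epsilon}\left(e^{\epsilon}+1\right).
\]
This is the one genuinely clever step; everything afterwards is a reduction to a result already established in the excerpt.

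Next I would pass to hyperbolic functions via the substitution $t=\epsilon/2$. Using $e^{\epsilon}-1=2e^{\epsilon/2}\sinh(\epsilon/2)$ and $e^{\epsilon}\left(e^{\epsilon}+1\right)=2e^{3\epsilon/2}\cosh(\epsilon/2)$, the two pieces become $2(e^{\epsilon}-1)^3=16\,e^{3\epsilon/2}\sinh^3(\epsilon/2)$ and $\epsilon^3 e^{\epsilon}(e^{\epsilon}+1)=16\,(\epsilon/2)^3 e^{3\epsilon/2}\cosh(\epsilon/2)$, so that
\[
P(\epsilon)=16\,e^{3\epsilon/2}\left(\sinh^3(\epsilon/2)-(\epsilon/2)^3\cosh(\epsilon/2)\right).
\]
Since $16\,e^{3\epsilon/2}>0$, the desired inequality $P(\epsilon)>0$ for $\epsilon>0$ is exactly equivalent to $\sinh^3 t - t^3\cosh t>0$ for all $t>0$.

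Finally I would invoke Lemma~\ref{LemmaBs0} directly. Its proof establishes the identity
\[
\sinh(5t)-5\sinh(3t)+10\sinh(t)+4t^3\cosh(t)-4t^3\cosh(3t)=16\sinh^2(t)\left(\sinh^3 t-t^3\cosh t\right),
\]
together with strict positivity of the left-hand side on $\mathbb{R}_{>0}$; equivalently, the auxiliary function $a(t)=\sinh^3(t)/\cosh(t)-t^3$ is shown there to be strictly positive for $t>0$. Dividing by $16\sinh^2 t>0$ gives $\sinh^3 t-t^3\cosh t>0$ for every $t>0$, and hence $P(\epsilon)>0$ for all $\epsilon>0$, as claimed.

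I expect essentially no obstacle beyond spotting the factorization: the only substantive analytic inequality required, namely $\sinh^3 t>t^3\cosh t$, has already been carried out in the proof of Lemma~\ref{LemmaBs0} through the successive sign analyses of $a'''$, $a''$, $a'$, and $a$. A self-contained alternative, were one unwilling to reuse that lemma, would be to read off from the Taylor expansion that $P(0)=P'(0)=\cdots=P^{(6)}(0)=0$ while $P^{(7)}(0)=42>0$ and then propagate positivity through the derivatives, but this is strictly more laborious than the cited reduction.
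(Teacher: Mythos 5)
Your proof is correct, but it takes a genuinely different route from the paper's. The paper proves Lemma~\ref{Lemmaepsilon} by brute-force repeated differentiation: it writes $P'(\epsilon)=e^{3\epsilon}Q(\epsilon)$, $Q'(\epsilon)=e^{-\epsilon}R(\epsilon)$, computes $R^{(4)}(\epsilon)=e^{-\epsilon}\epsilon(2\epsilon^2-21\epsilon+42)$, and then propagates strict positivity back down the chain $R'''\to R''\to R'\to R\to Q\to P$, using at each stage that the function vanishes at $0$. Your argument instead rests on the factorization $P(\epsilon)=2(e^{\epsilon}-1)^3-\epsilon^3 e^{\epsilon}(e^{\epsilon}+1)=16\,e^{3\epsilon/2}\left(\sinh^3(\epsilon/2)-(\epsilon/2)^3\cosh(\epsilon/2)\right)$ (both identities check out), which reduces the claim to $\sinh^3 t>t^3\cosh t$ for $t>0$ --- precisely the inequality established inside the proof of Lemma~\ref{LemmaBs0} via the auxiliary function $a(t)=\sinh^3(t)/\cosh(t)-t^3$, and also recoverable from the statement of that lemma through the identity $\sinh(5t)-5\sinh(3t)+10\sinh(t)+4t^3\cosh(t)-4t^3\cosh(3t)=16\sinh^2(t)\left(\sinh^3 t-t^3\cosh t\right)$ after dividing by $16\sinh^2 t>0$. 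There is no circularity in this reuse: Lemma~\ref{LemmaBs0} lives in the Gaussian appendix and nowhere depends on Lemma~\ref{Lemmaepsilon}. What your route buys is economy and structural insight --- it reveals that the same core inequality $\sinh^3 t>t^3\cosh t$ underlies both the Gaussian and exponential appendices, and it replaces a four-level derivative cascade with a single algebraic observation. What the paper's route buys is self-containedness: the exponential appendix stands entirely on its own, at the cost of a longer elementary computation.
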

\begin{proof}[Proof of Lemma \ref{Lemmaepsilon}]
We have:
\begin{align*}
    P'(\epsilon)&=e^{3\epsilon}\left((6-3\epsilon^2-\epsilon^3)e^{-2\epsilon} -(12+3\epsilon^2+2\epsilon^3)e^{-\epsilon}+6\right)\coloneqq e^{3\epsilon}Q(\epsilon),\\
Q'(\epsilon)&=e^{-\epsilon}\left[\left(2\epsilon^3+3\epsilon^2-6\epsilon-12\right)e^{-\epsilon}+2\epsilon^3-3\epsilon^2-6\epsilon+12 \right]\coloneqq e^{-\epsilon}R(\epsilon).
\end{align*}
It is then straightforward to compute $R^{(4)}(\epsilon)=e^{-\epsilon}\epsilon(2\epsilon^2-21\epsilon+42)$ whose roots are $\epsilon_{\pm}=\frac{21}{4}\pm \sqrt{105}{4}$. Thus $R'''$ is strictly increasing on $(0,\epsilon_-)$ and then strictly decreasing on $(\epsilon_-,\epsilon_+)$ and finally increasing on $(\epsilon_+,+\infty)$. Since $R'''(0)=0$ and $R'''(\epsilon_+)>0$ we conclude that $R'''$ is strictly positive on $(0,+\infty)$. Thus, $R''$ is strictly increasing on $\mathbb{R}_{\geq 0}$ and since $R''(0)=0$ it is strictly positive on $\mathbb{R}_{>0}$. Hence $R'$ is strictly increasing on $\mathbb{R}_{\geq 0}$ and since $R'(0)=0$ it is strictly positive on $\mathbb{R}_{>0}$. Eventually, $R$ is strictly increasing on $\mathbb{R}_{\geq 0}$ and since $R(0)=0$, we get that $R$ is strictly positive on $\mathbb{R}_{>0}$ so that $Q$ is strictly increasing on $\mathbb{R}_{\geq 0}$. In the end, since $Q(0)=0$, $Q$ is strictly positive on $\mathbb{R}_{>0}$ so that $P$ is strictly increasing on $\mathbb{R}_{\geq 0}$. Since $P(0)=0$, we conclude that $P$ is strictly positive on $\mathbb{R}_{>0}$, which concludes the proof of Lemma \ref{Lemmaepsilon}.
\end{proof}

\subsection{Proving that the truncated exponential is never strictly sub-Gaussian}\label{NotStrictlySubGaussian}
Let us study the sign of 
\begin{equation*}
    \|Y_{\trunc}\|_{\vp}^2-\text{Var}[Y_{\trunc}]
=\frac{e^{\alpha+\beta}}{2\left(e^{\beta}-e^{\alpha}\right)}\left((\alpha-\beta-4)e^{\alpha-\beta}+(\beta-\alpha-4)e^{\beta-\alpha}+2(\beta-\alpha)^2+8\right).
\end{equation*}
Observe that the last term is only a function of $\epsilon=\beta-\alpha>0$. Thus, the sign of $\|Y_{\trunc}\|_{\vp}^2-\text{Var}[Y_{\trunc}]$ is the same as the sign of the function $K$ on $\mathbb{R}_{>0}$ defined by
\begin{equation*}
    K(\epsilon)\coloneqq (\epsilon-4)e^{\epsilon} -(\epsilon+4)e^{-\epsilon}+2\epsilon^2+8=2\epsilon \sinh \epsilon -8\cosh \epsilon+2\epsilon^2+8.
\end{equation*}
We have:
\begin{align*}
    K'(\epsilon)&=2\epsilon \cosh\epsilon-6\sinh\epsilon+4\epsilon,\\
K''(\epsilon)&=2\epsilon \sinh\epsilon-4\cosh\epsilon+4,\\
K'''(\epsilon)&=2\left(\epsilon \cosh\epsilon-\sinh\epsilon\right).
\end{align*}

It is obvious that $K'''$ is strictly positive on $\mathbb{R}_{>0}$. Since $K''(0)=0$, $K'(0)=0$ and $K(0)=0$ we get successively that $K''$, $K'$ and $K$ are strictly increasing and strictly positive on $\mathbb{R}_{>0}$. Thus, we conclude that for all $\alpha<\beta$, we have $\|Y_{\trunc}\|_{\vp}^2>\text{Var}[Y_{\trunc}]$ so that the truncated exponential is never strictly sub-Gaussian. 


\bibliographystyle{apalike}
\bibliography{references}

\end{document}